\documentclass[a4paper,12pt]{amsart}
\usepackage[a4paper, margin=2.8cm]{geometry}

\usepackage{amssymb,amsmath,amsrefs}
\usepackage{enumerate,hyperref}
\setlength{\parindent}{0pt}

\tolerance
\hbadness
\emergencystretch 1.5em
\hfuzz 0.3pt
\widowpenalty=10000
\vfuzz \hfuzz
\raggedbottom

\usepackage{comment}

\def\dep{\operatorname{depth}}

\def\supp{\operatorname{supp}}
\def\ass{\operatorname{Ass}}
\def\ann{\operatorname{Ann}}

\def\coht{\operatorname{coht}}
\def\hte{\operatorname{ht}}
\def\bight{\operatorname{bight}}
\def\reg{\operatorname{reg}}
\def\pdim{\operatorname{pdim}}
\def\deg{\operatorname{deg}}
\def\mdim{\operatorname{indim}}

\def\del{\operatorname{del}}

\def\link{\operatorname{link}}

\newcommand{\lr}[1]{\left\langle#1\right\rangle}

\newcommand{\ceil}[1]{\left\lceil #1 \right\rceil}

\newcommand{\f}{\mathcal{F}}
\newcommand{\n}{\mathcal{N}}

\newcommand{\K}{\mathbb{K}}

\newtheorem{theorem}{Theorem}[section]
\newtheorem{lemma}[theorem]{Lemma}
\newtheorem{corollary}[theorem]{Corollary}
\newtheorem{proposition}[theorem]{Proposition}

\theoremstyle{definition}
\newtheorem{definition}[theorem]{Definition}
\newtheorem{remark}[theorem]{Remark}
\newtheorem{example}[theorem]{Example}

\begin{document}
	
\title[Initially Cohen--Macaulay Modules]{Initially Cohen--Macaulay Modules}

\author{Mohammed Rafiq Namiq}
\address{Mohammed Rafiq Namiq, Department of Mathematics, College of Science, University of Sulaimani, Kurdistan Region, Iraq.}
\email{mohammed.namiq@univsul.edu.iq}

\keywords{Initially Cohen--Macaulay, Depth, Projective dimension, Simplicial complex, Edge ideal, Chordal graph, Maximum degree.}
\subjclass[2020]{Primary 13C70, 13C15, 13F55; Secondary 05E40, 	05C90, 05C07.}
%\date{\today} 

\maketitle 

\begin{abstract}
	In this paper, we introduce initially Cohen--Macaulay modules over a commutative Noetherian local ring $R$, a new class of $R$-modules that generalizes both Cohen--Macaulay and sequentially Cohen--Macaulay modules. A finitely generated $R$-module $N$ is initially Cohen--Macaulay if its depth is equal to its initial dimension, an invariant defined as the infimum of the coheights of the associated primes of $N$. We develop the theory of these modules, providing homological, combinatorial, and topological characterizations and confirming their compatibility with regular sequences, localization, and dimension filtrations. When this theory is applied to simplicial complexes, we establish analogues of Reisner’s criterion, the Eagon--Reiner theorem, and Duval’s characterization of sequentially Cohen--Macaulay complexes. Finally, we classify certain classes of initially Cohen--Macaulay graphs of interest and those whose projective dimension coincides with their maximum vertex degree.
\end{abstract}

\section{Introduction}
\quad For nearly a century, Cohen--Macaulay modules have been a central object of study in commutative algebra, following the foundational work of Cohen and Macaulay \cite{Cohen1946, Macaulay1994}. This concept was later broadened by Stanley and Schenzel, who independently defined sequentially Cohen--Macaulay modules \cite{Stanley1996, Schenzel1999}. While these generalizations are powerful, they still exclude many modules and rings that possess significant algebraic properties.

This paper introduces a new, broader class of modules, which we term \emph{initially Cohen--Macaulay}. Our approach is to replace the classical Krull dimension in the definition of Cohen-Macaulay modules with a different invariant, which we call the \emph{initial dimension}, given by
$$\mdim N=\inf\{\coht P\mid P\in\ass_RN\}.$$
A finitely generated module $N$ over a local ring $(R,M)$ is then defined as \emph{initially Cohen--Macaulay} if $N=0$ or if its depth equals this new invariant:
$$\dep N=\mdim N.$$

This definition immediately includes sequentially Cohen--Macaulay modules and offers a deeper combinatorial and homological perspective on these modules and their associated invariants. The well-known fact that depth is bounded by the coheight of any associated prime \cite[Theorem 17.2]{Matsumura1989} provides the foundational inequality for this new property. We state this formally:

\begin{theorem}[Proposition \ref{depth-mdim}; c.f. {\cite[Theorem 17.2]{Matsumura1989}}]
	For any finitely generated $R$-module, we have
	$$\dep N\le\mdim N.$$
	Equality holds if and only if $N$ is initially Cohen--Macaulay.
\end{theorem}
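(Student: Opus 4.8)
The plan is to split the statement into two parts: the universal inequality, which I will deduce from the classical bound on depth by the coheight of an associated prime, and the ``if and only if'', which is simply the definition of initially Cohen--Macaulay read off from that inequality.

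First I would recall the pointwise estimate $\dep N\le\coht P$ for every $P\in\ass_R N$, which is \cite[Theorem 17.2]{Matsumura1989} (recall $\coht P=\dim R/P$). For completeness one can reprove it by induction on $\dep N$: the case $\dep N=0$ is immediate since coheights are non-negative; if $\dep N\ge 1$, pick an $N$-regular element $x\in M$ (possible by prime avoidance, as $M\notin\ass_R N$), and observe that the composite $R/P\hookrightarrow N\twoheadrightarrow N/xN$ coming from $P\in\ass_R N$ cannot be injective, because $x$ annihilates $N/xN$ but not $R/P$; its nonzero kernel has the form $I/P$ with $P\subsetneq I$, and the induced embedding $R/I\hookrightarrow N/xN$ shows that some $P'\in\ass_R(N/xN)$ contains $I$, so $P\subsetneq P'$ and hence $\coht P'\le\coht P-1$. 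Since $\dep(N/xN)=\dep N-1$, the inductive hypothesis gives $\dep N-1\le\coht P'\le\coht P-1$. Taking the infimum over $P\in\ass_R N$ then yields
$$\dep N\le\inf\{\coht P\mid P\in\ass_R N\}=\mdim N.$$
When $N=0$ both sides equal $+\infty$ by the usual conventions ($\dep 0=+\infty$ and $\ass_R 0=\emptyset$, so the infimum is over the empty set), so the inequality holds there as well.

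For the equivalence: when $N\neq 0$, by definition $N$ is initially Cohen--Macaulay exactly when $\dep N=\mdim N$, which is precisely equality in the bound just established; and when $N=0$ the module is initially Cohen--Macaulay by convention while, as noted, both invariants equal $+\infty$, so equality holds there too. Hence equality in $\dep N\le\mdim N$ characterizes the initially Cohen--Macaulay modules in all cases. I do not anticipate a genuine obstacle here: the substance of the statement is conceptual---it records that the defining inequality of the new notion is automatic, so that ``initially Cohen--Macaulay'' is the natural extremal condition---and the only care required is in fixing the conventions for the zero module (and for the empty set of associated primes) and in invoking the classical bound in the correct form.
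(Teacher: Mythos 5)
Your proposal is correct and follows essentially the same route as the paper: cite the classical bound $\dep N\le\coht P$ for each $P\in\ass_R N$ (Matsumura, Theorem 17.2), take the infimum over associated primes, and observe that equality is precisely the definition of initially Cohen--Macaulay. The optional inductive re-derivation of the classical bound and the explicit treatment of the $N=0$ conventions are harmless extras not present in (and not needed for) the paper's two-line argument.
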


This theorem establishes that the initially Cohen--Macaulay condition is a natural weakening of the classical Cohen--Macaulay property. It serves as the starting point for investigating the structural, homological, and combinatorial consequences of this definition.

This paper has two primary goals. We first build the foundational theory of initially Cohen--Macaulay modules, establishing their relationship to both the classical Cohen--Macaulay and sequentially Cohen--Macaulay classes. Our second goal is to demonstrate the utility of this new definition by applying it to modules arising in combinatorics, specifically Stanley--Reisner rings and graph edge ideals. In doing so, we gain new insights into their homological invariants. We begin by establishing a key structural theorem:

\begin{theorem}[Proposition \ref{ICM-unmixed}]
	A finitely generated $R$-module $N$ is Cohen--Macaulay if and only if it is initially Cohen--Macaulay and counmixed, meaning all associated primes of $N$ have the same coheight.
\end{theorem}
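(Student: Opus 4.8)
The plan is to prove the equivalence by unwinding the definitions and invoking the foundational inequality $\dep N \le \mdim N$ together with the classical inequality $\dep N \le \dim N$. First, observe that by definition $\mdim N = \inf\{\coht P \mid P \in \ass_R N\}$, while $\dim N = \sup\{\coht P \mid P \in \ass_R N\}$ since $\dim N = \coht(\ann N)$ and the minimal primes of $\ann N$ are among the associated primes, all of which contain $\ann N$. Hence we always have $\mdim N \le \dim N$, with equality precisely when all associated primes of $N$ share a common coheight, i.e.\ when $N$ is counmixed. (When $N=0$ all statements hold vacuously, so assume $N \ne 0$.)

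For the forward direction, suppose $N$ is Cohen--Macaulay, so $\dep N = \dim N$. Chaining the inequalities $\dep N \le \mdim N \le \dim N = \dep N$ forces $\dep N = \mdim N$, which is exactly the initially Cohen--Macaulay condition, and also forces $\mdim N = \dim N$, which by the observation above means $N$ is counmixed.

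For the converse, suppose $N$ is initially Cohen--Macaulay and counmixed. Counmixedness gives $\mdim N = \dim N$ by the observation above, and the initially Cohen--Macaulay hypothesis gives $\dep N = \mdim N$; combining these yields $\dep N = \dim N$, so $N$ is Cohen--Macaulay. I expect no serious obstacle here: the only point requiring a little care is the identification $\dim N = \sup\{\coht P \mid P \in \ass_R N\}$, which should be recorded as a short lemma or cited from a standard reference (e.g.\ \cite[Chapter 1]{Matsumura1989}), using that the minimal elements of $\supp N = V(\ann N)$ coincide with the minimal elements of $\ass_R N$ and realize the dimension. Everything else is a formal manipulation of the two chains of inequalities.
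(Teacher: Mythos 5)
Your proposal is correct and follows essentially the same route as the paper: both reduce the statement to the chain $\dep N \le \mdim N \le \dim N$ (the first inequality being Proposition \ref{depth-mdim}, the second from $\mdim$ and $\dim$ being the infimum and supremum of the same set of coheights of associated primes) and to the observation that counmixedness is exactly $\mdim N = \dim N$ (the paper's Lemma \ref{counmixed-equiv}). Your extra care in justifying $\dim N = \sup\{\coht P \mid P \in \ass_R N\}$ via the minimal primes of $\ann N$ is a point the paper only records as a remark, but it is not a different argument.
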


This theorem shows that the initially Cohen--Macaulay property is precisely the Cohen--Macaulay property without the counmixed condition. Furthermore, we establish that the property is well-behaved; in Propositions \ref{regular element} and \ref{localization initial dimension}, we prove that it is preserved under localization and when quotienting by regular elements.

\quad One of the key tools in our analysis is the dimension filtration introduced by Schenzel \cite{Schenzel1999}. For an $R$-module $N$ with $\dim N = d$, this filtration is given by
$$0=N_0\subset N_{1}\subset\dots\subset N_{d}=N,$$
where $N_i$ is the largest submodule of $N$ with $\dim N_i\le i$. This filtration is used to define sequentially Cohen--Macaulay modules. According to Schenzel \cite[Definition 4.1]{Schenzel1999} and Stanley \cite[Section III, 2.9]{Stanley1996} (for graded modules), an $R$-module is sequentially Cohen--Macaulay if every quotient $N_i/N_{i-1}$ of this filtration is Cohen--Macaulay (starting from $i = \mdim N$, see Lemma \ref{filtration-indim}). We connect our new definition to this concept, showing that the initially Cohen--Macaulay property depends only on the \emph{first} non-zero piece of this filtration.

\begin{theorem}[Proposition \ref{ICM-filtration}]
	A finitely generated $R$-module $N$ is initially Cohen--Macaulay if and only if $N_{\mdim N}$ is Cohen--Macaulay in the dimension filtration of $N$.
\end{theorem}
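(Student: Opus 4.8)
The plan is to reduce the equivalence to a statement about the first nonzero stratum $N_{\mdim N}$ of the dimension filtration and to compare $N$ with that stratum via local cohomology. Write $t=\mdim N$. First I would record two structural facts about $N_t$ coming from Lemma~\ref{filtration-indim} and the defining maximality of the $N_i$: it is nonzero, and $\ass_R N_t=\{P\in\ass_R N\mid\coht P=t\}$. Indeed, by the choice of $t$ every $P\in\ass_R N$ satisfies $\coht P\ge t$, so an embedding $R/P\hookrightarrow N$ coming from such a $P$ with $\coht P=t$ (which exists since $\ass_R N$ is finite) already shows $N_t\neq 0$; and any submodule of $N$ of dimension $\le t$ has all associated primes of coheight $\le t$, hence of coheight exactly $t$. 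Thus $N_t$ is counmixed of coheight $t$, so $\dim N_t=t=\mdim N_t$, and by Proposition~\ref{ICM-unmixed} the module $N_t$ is Cohen--Macaulay if and only if $\dep N_t=t$. Since $\dep N\le\mdim N=t$ by Proposition~\ref{depth-mdim}, the proposition is equivalent to the single assertion $\dep N=t\iff\dep N_t=t$.

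Next I would feed the short exact sequence $0\to N_t\to N\to N/N_t\to 0$ into local cohomology. Because $N_t$ is the \emph{largest} submodule of dimension $\le t$, the quotient $N/N_t$ has no associated prime of coheight $\le t$: a nonzero submodule of $N/N_t$ of dimension $\le t$ would pull back to a submodule of $N$ of dimension $\le t$ strictly larger than $N_t$. Consequently the dimension filtration of $N/N_t$ is the shifted tail $(N/N_t)_j=N_j/N_t$ of that of $N$, which vanishes for $j\le t$, and $\mdim(N/N_t)\ge t+1$. The decisive input I then need is the vanishing $H^i_M(N/N_t)=0$ for $i<t$, that is, $\dep(N/N_t)\ge t$. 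Granting it, the long exact sequence in local cohomology of the above short exact sequence yields isomorphisms $H^i_M(N_t)\cong H^i_M(N)$ for $i<t$ together with an injection $H^t_M(N_t)\hookrightarrow H^t_M(N)$; since $H^t_M(N_t)\neq 0$ (as $\dim N_t=t$), it follows that $H^i_M(N)=0$ for all $i<t$ precisely when $H^i_M(N_t)=0$ for all $i<t$, which is exactly the equivalence $\dep N=t\iff\dep N_t=t$. The degenerate cases $t=0$ (where $N_0=N$, and both sides hold since a dimension-zero module is Cohen--Macaulay and $\dep N=0$) and $N=0$ are handled separately.

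I expect the main obstacle to be precisely this auxiliary vanishing $\dep(N/N_{\mdim N})\ge\mdim N$. The natural line of attack is an induction on $\dim N-\mdim N$: the base case $\dim N=\mdim N$ is exactly the identification of Cohen--Macaulay modules with counmixed initially Cohen--Macaulay ones, and in the inductive step $N/N_t$ satisfies $\dim(N/N_t)=\dim N$ but $\mdim(N/N_t)\ge t+1$, so the induction hypothesis applies to $N/N_t$; one then climbs the filtration $N_t\subset N_{t+1}\subset\dots\subset N$ using the short exact sequences $0\to N_i/N_{i-1}\to N/N_{i-1}\to N/N_i\to 0$ for $t<i\le\dim N$, in each of which $N_i/N_{i-1}$ is counmixed of coheight $i$, and tracks how the depths of the successive strata interact in the associated long exact sequences. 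Making this propagation of depth precise is the technical heart of the argument, and it is the step where the structure of the dimension filtration — beyond the formal bookkeeping used above — genuinely enters.
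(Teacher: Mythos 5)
Your reduction of the statement to the single equivalence $\dep N=t\iff\dep N_t=t$ (where $t=\mdim N$) is correct, and up to that point you run parallel to the paper: its proof rests on Proposition \ref{dep N=dep Nb}, which asserts $\dep N=\dep N_t$ outright and is argued with regular sequences rather than local cohomology. In your long exact sequence, \emph{both} implications require the auxiliary input you flag, namely $H^i_M(N/N_t)=0$ for $i<t$; without it the maps $H^i_M(N_t)\to H^i_M(N)$ need be neither injective nor surjective. So everything hinges on the claim $\dep(N/N_t)\ge t$, which you correctly identify as the technical heart and leave unproved.

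That claim is false, and the gap cannot be closed. The inequality $\mdim(N/N_t)\ge t+1$ bounds $\dep(N/N_t)$ from \emph{above} (Proposition \ref{depth-mdim}), not from below, and there is no lower bound in general. Concretely, let $R$ be a regular local ring admitting a quotient $A=R/P$ that is a three-dimensional domain with $\dep A=1$ (such domains exist; e.g.\ localize at the irrelevant ideal the Segre product of the homogeneous coordinate ring of the rational quartic curve with $\K[u,v]$, or the cone over a non--linearly-normal smooth projective surface), and let $B=R/Q$ be Cohen--Macaulay of dimension $2$. Put $N=A\oplus B$. Then $\ass_RN=\{P,Q\}$, so $t=\mdim N=2$; since every nonzero element of $A$ has annihilator $P$ of coheight $3$, the largest submodule of dimension $\le 2$ is $N_2=B$, which is Cohen--Macaulay, while $N/N_2\cong A$ has depth $1<t$. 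Moreover $\dep N=\min\{\dep A,\dep B\}=1\ne 2=\mdim N$, so $N$ is \emph{not} initially Cohen--Macaulay even though $N_{\mdim N}$ \emph{is} Cohen--Macaulay. Thus the missing lemma is not merely hard: the backward implication of the statement itself fails, so no completion of your sketched induction (or any other argument) is possible without further hypotheses. The same example gives $\dep N_2=2>1=\dep N$ and therefore also contradicts the paper's Proposition \ref{dep N=dep Nb}; the flaw there is the inductive step, which identifies the submodules $(N_i+aN)/aN$ with the dimension filtration of $N/aN$ --- an identification that breaks down exactly when $N/aN$ acquires new associated primes of small coheight, as happens here.
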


This result immediately shows that every sequentially Cohen--Macaulay module is also initially Cohen--Macaulay. Consequently, many classical properties and results known for Cohen--Macaulay or sequentially Cohen--Macaulay modules extend naturally to this broader class. We also provide a purely dimension-theoretic characterization:

\begin{theorem}[Proposition \ref{prop:ICM}]
	Let $(R,M)$ be a local ring and $N$ a finitely generated $R$-module with $\mdim N=b$. Then $N$ is initially Cohen--Macaulay if and only if there exists a sequence $a_1,\dots,a_b\in M$ such that $N_b/(a_1,\dots,a_i)N_b$
	is counmixed of dimension $b-i$ for all $1\le i\le b$, where $N_b$ is the largest submodule of $N$ of dimension $b$.
\end{theorem}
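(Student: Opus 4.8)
The plan is to use Proposition~\ref{ICM-filtration} to reduce the statement to a property of the single module $N_b$ and then to recognize the displayed condition as saying precisely that $N_b$ carries a maximal regular sequence, i.e. that $N_b$ is Cohen--Macaulay. Before imposing any hypothesis, I would record two facts about $N_b=N_{\mdim N}$ (the first nonzero term of the dimension filtration, by Lemma~\ref{filtration-indim}): it is nonzero, and since $\ass N_b=\{P\in\ass N:\coht P\le b\}$ while $\mdim N=b$ forces $\coht P\ge b$ for every $P\in\ass N$, every associated prime of $N_b$ has coheight exactly $b$; thus $N_b$ is counmixed of dimension $b$. By Proposition~\ref{ICM-filtration}, $N$ is initially Cohen--Macaulay if and only if $N_b$ is Cohen--Macaulay, so it suffices to show that $N_b$ is Cohen--Macaulay if and only if a sequence $a_1,\dots,a_b\in M$ as in the statement exists.

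The engine of the proof is one elementary observation: if a finitely generated $R$-module $L$ is counmixed of dimension $c$, then it has no embedded primes, so $\ass L=\Min\supp L$, all of coheight $c$. Indeed, if $Q\subsetneq P$ were both in $\ass L$, then prepending $Q$ to any chain of primes above $P$ shows $\coht Q\ge\coht P+1$, contradicting counmixedness. Consequently, if $a\in M$ satisfies $\dim(L/aL)=c-1$, then $a$ lies in no minimal prime of $\supp L$ --- such a prime $P$ would give $V(P)\subseteq\supp(L/aL)$ and hence $\dim(L/aL)\ge\coht P=c$ --- so, since $\ass L=\Min\supp L$, the element $a$ is a nonzerodivisor on $L$.

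For the forward direction, if $N_b$ is Cohen--Macaulay then $\dep N_b=\dim N_b=b$, so $N_b$ admits a maximal regular sequence $a_1,\dots,a_b\in M$, which is then a system of parameters; each $N_b/(a_1,\dots,a_i)N_b$ is Cohen--Macaulay of dimension $b-i$, hence counmixed of dimension $b-i$ by Proposition~\ref{ICM-unmixed}. For the converse, suppose such a sequence exists, put $L_0=N_b$ and $L_i=N_b/(a_1,\dots,a_i)N_b=L_{i-1}/a_iL_{i-1}$. By the first paragraph (for $i=1$) and by hypothesis (for $2\le i\le b$), $L_{i-1}$ is counmixed of dimension $b-i+1$, while by hypothesis $\dim L_i=b-i=\dim L_{i-1}-1$; so the observation above shows $a_i$ is a nonzerodivisor on $L_{i-1}$. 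Hence $a_1,\dots,a_b$ is an $N_b$-regular sequence, so $\dep N_b\ge b$, and since $\dep N_b\le\dim N_b=b$ always, $N_b$ is Cohen--Macaulay; by Proposition~\ref{ICM-filtration}, $N$ is initially Cohen--Macaulay.

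The substance --- and the only genuine obstacle --- is the converse, where from the purely numerical input $\dim L_i=\dim L_{i-1}-1$ one must extract that $a_i$ avoids \emph{every} associated prime of $L_{i-1}$; this works precisely because counmixedness rules out embedded primes, so associated and minimal primes coincide at every stage. One must also be careful to verify that $N_b$ is counmixed of its own dimension $b$ (this anchors the induction at $i=1$), and to use the $i=b$ hypothesis --- a nonzero counmixed module of dimension $0$ --- to force $a_b$ to be a nonzerodivisor on the one-dimensional module $L_{b-1}$. Beyond these bookkeeping points, no delicate estimates are needed.
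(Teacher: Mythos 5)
Your proof is correct and follows essentially the same route as the paper: reduce via Proposition~\ref{ICM-filtration} to $N_b$ being Cohen--Macaulay, get the forward direction from the standard behaviour of regular sequences on Cohen--Macaulay modules, and for the converse show each $a_i$ is a nonzerodivisor because a zerodivisor on a counmixed module would lie in an associated prime of full coheight and so could not drop the dimension. Your explicit verification that $N_b$ itself is counmixed of dimension $b$ (anchoring the case $i=1$) is a point the paper passes over more quickly, but the argument is the same.
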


\quad The initially Cohen--Macaulay theory extends naturally to the combinatorial setting of Stanley--Reisner rings. We call a simplicial complex $\Delta$ initially Cohen--Macaulay if its Stanley--Reisner ring $\K[\Delta]$ is initially Cohen--Macaulay. Our main results in this area generalize three classical theorems. Using the concept of a degree resolution \cite[Definition 3.1]{Namiq2025}, we generalize the Eagon--Reiner theorem \cite[Theorem 3]{EagonReiner1998}:

\begin{theorem}[Proposition \ref{PCM degree resolution}]
	A simplicial complex $\Delta$ is initially Cohen--Macaulay if and only if its Alexander dual ideal has a degree resolution.
\end{theorem}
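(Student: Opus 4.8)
The plan is to reduce the biconditional to a single numerical identity and then run it through Terai's duality formula. Fix the vertex set $V$ of $\Delta$, put $n=|V|$, let $S$ be the polynomial ring in $n$ variables over $\K$, take the Alexander dual $\Delta^{\vee}$ with respect to $V$, and write $I_{\Delta^{\vee}}\subseteq S$ for its Stanley--Reisner ideal; the degenerate cases $\Delta=\emptyset$ and $\Delta=\{\emptyset\}$ being immediate, assume $\K[\Delta]\neq 0$. The first ingredient is purely combinatorial: since $\K[\Delta]$ is reduced, its associated primes are exactly the minimal primes $P_F=(x_i:i\in V\setminus F)$ indexed by the facets $F$ of $\Delta$, with $\coht P_F=\dim\K[x_i:i\in F]=|F|$, so that $\mdim\K[\Delta]=\min\{|F|:F\text{ a facet of }\Delta\}$. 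Dually, the minimal monomial generators of $I_{\Delta^{\vee}}$ are precisely the squarefree monomials $x_{V\setminus F}$ with $F$ a facet of $\Delta$, of degree $n-|F|$; hence the top degree of a minimal generator of $I_{\Delta^{\vee}}$ equals $q:=n-\mdim\K[\Delta]$.

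The second ingredient is homological. By the graded Auslander--Buchsbaum formula $\dep\K[\Delta]=n-\pdim_S\K[\Delta]$, and by Terai's duality formula $\pdim_S\K[\Delta]=\reg_S I_{\Delta^{\vee}}$. Combining the two ingredients, $\Delta$ is initially Cohen--Macaulay, i.e. $\dep\K[\Delta]=\mdim\K[\Delta]$, if and only if $\reg_S I_{\Delta^{\vee}}=q$, i.e. if and only if the Castelnuovo--Mumford regularity of $I_{\Delta^{\vee}}$ equals the top degree of a minimal generator. Because the regularity of a graded ideal is always at least the top generator degree, the condition $\reg_S I_{\Delta^{\vee}}=q$ is the same as the requirement that $\beta_{i,j}(I_{\Delta^{\vee}})=0$ for all $j>i+q$; by \cite[Definition 3.1]{Namiq2025} this is exactly the assertion that $I_{\Delta^{\vee}}$ has a degree resolution, which completes the argument.

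The one place that needs genuine care is the final identification: one must unwind \cite[Definition 3.1]{Namiq2025} and confirm that $I_{\Delta^{\vee}}$ admitting a degree resolution really is equivalent to the regularity bound $\reg_S I_{\Delta^{\vee}}\leq q$, rather than to some a priori stronger strand-by-strand linearity condition; everything else is bookkeeping. As consistency checks: when every facet of $\Delta$ has the same cardinality (the counmixed case) $I_{\Delta^{\vee}}$ is generated in a single degree, a degree resolution becomes a linear resolution, and the statement collapses to the Eagon--Reiner theorem \cite[Theorem 3]{EagonReiner1998}; and since a componentwise linear ideal has regularity equal to its top generator degree, the statement is compatible with the Herzog--Hibi description of sequentially Cohen--Macaulay complexes, recovering the fact that sequentially Cohen--Macaulay complexes are initially Cohen--Macaulay. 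A less direct alternative would be to proceed through Proposition \ref{ICM-filtration}, identifying the first nonzero piece $N_{\mdim\K[\Delta]}$ of the dimension filtration of $\K[\Delta]$ with $I_{\Delta'}/I_{\Delta}$ for $\Delta'$ the subcomplex generated by the facets of $\Delta$ of cardinality greater than $\mdim\K[\Delta]$, and then deciding when this module is Cohen--Macaulay; but that route would require a module-theoretic analogue of Eagon--Reiner and is messier.
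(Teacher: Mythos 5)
Your proof is correct and follows essentially the same route as the paper's: both arguments reduce the statement to the identity $\reg I_{\Delta^\vee}=\deg I_{\Delta^\vee}$ via Terai's formula and Auslander--Buchsbaum, the paper packaging the latter step as Proposition \ref{pdim-bight} together with the fact $\bight I_\Delta=\deg I_{\Delta^\vee}$, which is exactly your facet computation. Your worry about the meaning of ``degree resolution'' is moot here, since the paper defines it precisely as $\reg I=\deg I$.
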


This leads to a simple and powerful characterization in terms of the complex's skeletons:

\begin{theorem}[Proposition \ref{PCM CM}]
	A simplicial complex $\Delta$ is initially Cohen--Macaulay if and only if its initial dimension skeleton $\Delta^{\mdim\Delta}$ is Cohen--Macaulay.
\end{theorem}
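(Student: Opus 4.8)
The plan is to unwind both conditions into the vanishing of reduced simplicial homology of links and match them directly, via Hochster's formula and Reisner's criterion; an alternative route through Proposition~\ref{PCM degree resolution} is indicated at the end. First I would record that $\mdim\Delta=b$, where $b:=\min\{|F|:F\text{ a facet of }\Delta\}$. Indeed $\K[\Delta]$ is reduced, so $\ass\K[\Delta]=\Min\K[\Delta]=\{P_F:F\text{ a facet of }\Delta\}$ with $P_F=(x_i:i\notin F)$ and $\coht P_F=|F|$. The skeleton $\Delta^{\mdim\Delta}$ --- the subcomplex of all faces of $\Delta$ of cardinality at most $b$ --- is pure of dimension $b-1$, since any face of cardinality $<b$ lies in a facet, which has at least $b$ vertices, and hence lies in a $b$-element face. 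Consequently $\dim\K[\Delta^{b}]=b$ and $\K[\Delta^{b}]$ is counmixed, so by Proposition~\ref{ICM-unmixed} the condition ``$\Delta^{b}$ is Cohen--Macaulay'' is equivalent to ``$\dep\K[\Delta^{b}]=b$''; and since $\dep\K[\Delta]\le\mdim\Delta=b$ by Proposition~\ref{depth-mdim}, the condition ``$\Delta$ is initially Cohen--Macaulay'' is equivalent to ``$\dep\K[\Delta]\ge b$''. It therefore suffices to prove that $\dep\K[\Delta]\ge b$ if and only if $\dep\K[\Delta^{b}]\ge b$.

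Next I would translate each side into links. By Hochster's formula for the local cohomology of a Stanley--Reisner ring, $\dep\K[\Delta]\ge b$ holds if and only if $\tilde H_{j}(\link_{\Delta}G;\K)=0$ for every $G\in\Delta$ and every $j<b-1-|G|$ (this recovers the initially Cohen--Macaulay analogue of Reisner's criterion). Since $\K[\Delta^{b}]$ is pure of dimension $b-1$, Reisner's criterion says $\dep\K[\Delta^{b}]=b$ holds if and only if $\tilde H_{j}(\link_{\Delta^{b}}G;\K)=0$ for every $G\in\Delta^{b}$ and every $j<b-1-|G|$. In both statements the faces $G$ with $|G|\ge b-1$ impose nothing: the range for $j$ is either empty or only demands $\tilde H_{-1}(\link G;\K)=0$, i.e.\ $\link G\ne\{\emptyset\}$, i.e.\ $G$ is not a facet --- which is automatic because no facet of $\Delta$ (nor of $\Delta^{b}$) has fewer than $b$ vertices. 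Hence both conditions are concentrated on the faces $G\in\Delta$ with $|G|\le b-2$, for which the relevant range is $0\le j\le b-2-|G|$.

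The crux is then the comparison of the links. For $G\in\Delta$ with $|G|\le b-2$ one checks that $\link_{\Delta^{b}}G=\{H\in\link_{\Delta}G:|H|\le b-|G|\}$, which is exactly the $(b-|G|-1)$-skeleton of $\link_{\Delta}G$. A simplicial complex and its $k$-skeleton have the same faces in dimensions $\le k$, hence the same reduced chain groups there, so $\tilde H_{j}(\link_{\Delta^{b}}G;\K)=\tilde H_{j}(\link_{\Delta}G;\K)$ for every $j\le(b-|G|-1)-1=b-2-|G|$ --- precisely the range above. Therefore the two link conditions agree face by face, and the theorem follows.

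I expect the main difficulty to be the degree bookkeeping in the last two steps: pinning down exactly which faces matter (this is where the hypothesis $b=\min_{F}|F|$ is essential, in order to discard the $\tilde H_{-1}$ contributions) and identifying $\link_{\Delta^{b}}G$ with the correct skeleton of $\link_{\Delta}G$ over the correct range of homological degrees. As an alternative one can argue from Proposition~\ref{PCM degree resolution}: writing $n$ for the number of vertices, one unwinds the definition of a degree resolution to see that $I_{\Delta^{\vee}}$ admits one if and only if the ideal generated by the graded component of $I_{\Delta^{\vee}}$ of degree $n-b$ (the top degree of a minimal generator) has a linear resolution; a direct computation identifies this ideal with $I_{(\Delta^{b})^{\vee}}$, and the Eagon--Reiner theorem turns ``$I_{(\Delta^{b})^{\vee}}$ has a linear resolution'' into ``$\Delta^{b}$ is Cohen--Macaulay''.
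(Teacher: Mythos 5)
Your main argument is correct, and it takes a genuinely different route from the paper. The paper proves this proposition by Alexander duality: it invokes Proposition~\ref{PCM degree resolution}, reduces ``$I_{\Delta^\vee}$ has a degree resolution'' to ``$(I_{\Delta^\vee})_k$ has a linear resolution'' for $k=\deg I_{\Delta^\vee}=\bight I_\Delta$ (citing \cite[Proposition 3.3]{Namiq2025}), and then applies Lemma~\ref{EagonReiner} together with Lemma~\ref{Alexander dual} to identify the dual of the truncated ideal with the skeleton $\Delta^{\mdim\Delta}$ --- i.e.\ exactly the ``alternative'' you sketch in your last paragraph. Your primary route instead works entirely on the primal side: Hochster's depth formula (Lemma~\ref{depth Hochster}) turns both ``$\dep\K[\Delta]\ge b$'' and ``$\Delta^{\mdim\Delta}$ is Cohen--Macaulay'' into vanishing of $\widetilde H_j(\link(G);\K)$ over the range $j\le b-2-|G|$, and the observation that $\link_{\Delta^{\mdim\Delta}}(G)$ is the $(b-|G|-1)$-skeleton of $\link_\Delta(G)$ --- hence has the same homology through degree $b-2-|G|$ --- closes the loop. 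In effect you prove the generalized Reisner criterion (the paper's Proposition~\ref{pro 1}, which appears only afterwards) first and deduce the skeleton characterization from it; this is non-circular, avoids Terai's formula, Eagon--Reiner, and the external degree-resolution machinery, and makes the underlying reason transparent (depth only sees low-degree link homology, which skeleta preserve). What it loses is the by-product the paper's route yields for free, namely the equivalence with degree resolutions of $I_{\Delta^\vee}$. One bookkeeping slip to fix: with the paper's conventions $\mdim\Delta=\min_F\dim F=\min_F|F|-1$, not $\min_F|F|$ (compare $\mdim\K[\Delta]=\mdim\Delta+1$ in the proof of Proposition~\ref{pro 1}), and $\Delta^i$ is indexed by dimension, so the skeleton of faces of cardinality at most $b$ is $\Delta^{b-1}=\Delta^{\mdim\Delta}$; your subsequent computations use the correct object, so only the labels need adjusting.
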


This result refines \cite[Theorem 3.3]{Duval1996} and provides a simpler computational check. We also refine the characterization of sequentially Cohen-Macaulay complexes:

\begin{theorem}[Proposition \ref{SCM}]
	A simplicial complex $\Delta$ is sequentially Cohen--Macaulay if and only if $\Delta^{[i]}$ is Cohen--Macaulay for all $\mdim\Delta\leq i\leq\dim\Delta$.
\end{theorem}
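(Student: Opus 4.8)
The plan is to induct on the number of distinct dimensions occurring among the facets of $\Delta$, at each stage peeling off the bottom nonzero term of Schenzel's dimension filtration. Write $N=\K[\Delta]$ and $b=\mdim\Delta$, and let $0=N_{b-1}\subseteq N_b\subseteq\cdots\subseteq N_{\dim N}=N$ be the dimension filtration, so $N_b$ is its first nonzero term (Lemma \ref{filtration-indim}). The engine of the induction is a splitting of the sequentially Cohen--Macaulay condition: because $N$ is reduced, $N_b$ is the intersection of those primary components $\overline{P_F}$ of $0$ with $F$ a facet of dimension strictly above the initial one, so $N/N_b\cong\K[\Gamma]$, where $\Gamma$ is the subcomplex of $\Delta$ generated by the facets that are \emph{not} of minimal dimension; and the dimension filtration of $N/N_b$ is $\big(N_i/N_b\big)_{i\ge b}$, the image of that of $N$. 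Since $N$ is sequentially Cohen--Macaulay iff every $N_i/N_{i-1}$ is Cohen--Macaulay, and the quotient $N_b/N_{b-1}=N_b$ together with the quotients $N_i/N_{i-1}$ for $i>b$ (which are, after the index shift, those of $N/N_b$) account for all of them, it follows that $N$ is sequentially Cohen--Macaulay iff $N_b$ is Cohen--Macaulay and $\K[\Gamma]$ is sequentially Cohen--Macaulay.

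Now I translate both halves into skeleton conditions. By Proposition \ref{ICM-filtration} the assertion ``$N_b$ is Cohen--Macaulay'' is exactly ``$N$ is initially Cohen--Macaulay'', hence by Proposition \ref{PCM CM} exactly ``$\Delta^{\mdim\Delta}$ is Cohen--Macaulay''; and at the initial dimension the ordinary skeleton $\Delta^{\mdim\Delta}$ coincides with the pure skeleton $\Delta^{[\mdim\Delta]}$, since every face of $\Delta$ below the initial dimension is contained in a facet and hence in a face of exactly the initial dimension. For the second half, $\Gamma$ has one fewer distinct facet-dimension than $\Delta$ while $\dim\Gamma=\dim\Delta$, and $\Gamma$ and $\Delta$ have identical face sets in every dimension exceeding the initial one; therefore $\Gamma^{[i]}=\Delta^{[i]}$ for all $i>\mdim\Delta$, and the inductive hypothesis turns ``$\K[\Gamma]$ sequentially Cohen--Macaulay'' into ``$\Delta^{[i]}$ Cohen--Macaulay for all $\mdim\Gamma\le i\le\dim\Delta$''. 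Any intermediate $i$ with $\mdim\Delta<i<\mdim\Gamma$ fails to be a facet-dimension of $\Delta$, so the $i$-faces of $\Delta$ are precisely the $i$-subfaces of facets of dimension at least $\mdim\Gamma$; thus $\Delta^{[i]}$ is an ordinary skeleton of $\Delta^{[\mdim\Gamma]}$, and since ordinary skeletons of Cohen--Macaulay complexes are Cohen--Macaulay (a classical consequence of Reisner's criterion) the condition for such $i$ adds nothing. Combining, $\Delta$ is sequentially Cohen--Macaulay precisely when $\Delta^{[i]}$ is Cohen--Macaulay for every $\mdim\Delta\le i\le\dim\Delta$. The base case is that of a pure complex, where the dimension filtration is trivial, sequential Cohen--Macaulayness reduces to Cohen--Macaulayness, and the single relevant skeleton $\Delta^{[\dim\Delta]}$ is $\Delta$ itself.

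The step I expect to demand the most care is the dimension-filtration splitting in the inductive step: establishing the primary decomposition of $N_b$, the isomorphism $N/N_b\cong\K[\Gamma]$, and the fact that the dimension filtration genuinely passes to $N/N_b$, so that sequential Cohen--Macaulayness decomposes as stated. The rest is either a direct comparison of face sets or a citation of Propositions \ref{ICM-filtration} and \ref{PCM CM} and of the classical stability of the Cohen--Macaulay property under passing to skeletons. I note finally that one can instead deduce the result from Duval's characterization \cite{Duval1996} — $\Delta$ is sequentially Cohen--Macaulay iff $\Delta^{[i]}$ is Cohen--Macaulay for all $0\le i\le\dim\Delta$ — by observing that once $\Delta^{[\mdim\Delta]}$ is Cohen--Macaulay every $\Delta^{[i]}$ with $i<\mdim\Delta$ is automatically Cohen--Macaulay, being a skeleton of the Cohen--Macaulay complex $\Delta^{[\mdim\Delta]}=\Delta^{\mdim\Delta}$; this truncates Duval's range to $\mdim\Delta\le i\le\dim\Delta$.
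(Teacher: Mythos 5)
Your proposal is correct, but its main body takes a genuinely different route from the paper. The paper's proof is essentially a citation: it obtains the statement by truncating Duval's criterion, using Proposition \ref{PCM CM} to identify the bottom condition ``$\Delta^{[\mdim\Delta]}=\Delta^{\mdim\Delta}$ is Cohen--Macaulay'' with initial Cohen--Macaulayness, and Remark \ref{9} to see that the conditions for $i<\mdim\Delta$ are then automatic. That is exactly the argument you relegate to your final remark. Your primary argument instead reproves the Duval-type statement from scratch by inducting on the number of distinct facet dimensions, splitting $N=\K[\Delta]$ along the first nonzero term of Schenzel's dimension filtration via $N/N_b\cong\K[\Gamma]$ and matching pure skeletons of $\Gamma$ and $\Delta$ above the initial dimension. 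This is more work but is self-contained (it does not presuppose Duval's theorem, only Schenzel's standard facts about the dimension filtration of a reduced ring), and the skeleton-matching and intermediate-range bookkeeping are all correct; you also rightly flag the filtration-splitting step, in particular that $(N/N_b)_i=N_i/N_b$, as the point requiring justification from Schenzel's results. One small indexing slip: with $b=\mdim\Delta$ the first nonzero term of the filtration of $\K[\Delta]$ is $N_{\mdim\K[\Delta]}=N_{b+1}$, not $N_b$ (Lemma \ref{filtration-indim} is stated for $\mdim N$, and $\mdim\K[\Delta]=\mdim\Delta+1$); this is purely notational and does not affect the argument.
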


Furthermore, we generalize Reisner’s criterion \cite[Theorem 1]{Reisner1976} from Cohen--Macaulay to initially Cohen--Macaulay complexes:

\begin{theorem}[Proposition \ref{pro 1}]
	A simplicial complex $\Delta$ is initially Cohen--Macaulay if and only if for every face $F$ in $\Delta$, the reduced homology $\widetilde{H}_i(\link_\Delta(F);\K)=0$ for all $i<\mdim\Delta-|F|$.
\end{theorem}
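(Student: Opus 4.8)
The plan is to derive Proposition \ref{pro 1} from Proposition \ref{PCM CM} by applying the classical Reisner criterion to the initial dimension skeleton and then translating the resulting link conditions back to $\Delta$. Set $b=\mdim\Delta$ and let $\Gamma=\Delta^{\mdim\Delta}$ be the skeleton of Proposition \ref{PCM CM}, whose faces are exactly the faces of $\Delta$ of dimension at most $b$. Since by definition of $\mdim$ every facet of $\Delta$ has dimension at least $b$, every face of $\Delta$ of dimension $<b$ extends within $\Delta$ to one of dimension exactly $b$; hence $\Gamma$ is pure of dimension $b$, and so is every link $\link_\Gamma(F)$, of dimension $b-|F|$. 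By Proposition \ref{PCM CM}, $\Delta$ is initially Cohen--Macaulay if and only if $\Gamma$ is Cohen--Macaulay, and because $\Gamma$ is pure, Reisner's criterion gives: $\Gamma$ is Cohen--Macaulay over $\K$ if and only if $\widetilde H_i(\link_\Gamma(F);\K)=0$ for every $F\in\Gamma$ and every $i<\dim\link_\Gamma(F)=b-|F|$.

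The first step is the elementary identity $\link_\Gamma(F)=(\link_\Delta(F))^{\,b-|F|}$, the $(b-|F|)$-skeleton of $\link_\Delta(F)$: a set $G$ disjoint from $F$ lies in $\link_\Gamma(F)$ precisely when $F\cup G\in\Delta$ and $\dim(F\cup G)\le b$, i.e. when $G\in\link_\Delta(F)$ and $\dim G\le b-|F|$. The second step is the standard fact that, for a simplicial complex $\Sigma$ and an integer $k$, the inclusion $\Sigma^{k}\hookrightarrow\Sigma$ of the $k$-skeleton induces isomorphisms $\widetilde H_i(\Sigma^{k};\K)\cong\widetilde H_i(\Sigma;\K)$ for all $i<k$, since the augmented simplicial chain complexes agree in every homological degree at most $k$. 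Applying this with $\Sigma=\link_\Delta(F)$ and $k=b-|F|$ shows that $\widetilde H_i(\link_\Gamma(F);\K)=0$ for all $i<b-|F|$ is equivalent to $\widetilde H_i(\link_\Delta(F);\K)=0$ for all $i<b-|F|=\mdim\Delta-|F|$. Combining this with the previous paragraph, $\Gamma$ is Cohen--Macaulay if and only if $\widetilde H_i(\link_\Delta(F);\K)=0$ for all $F\in\Gamma$ and all $i<\mdim\Delta-|F|$.

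It remains to note that the quantifier ``$F\in\Gamma$'' can be replaced by ``$F\in\Delta$'': if $F\in\Delta$ has $\dim F>b$ then $\mdim\Delta-|F|=b-\dim F-1<-1$, so the range $i<\mdim\Delta-|F|$ is empty and the condition is vacuous (reduced homology vanishes in degrees below $-1$), and the same holds for the faces of $\Gamma$ of dimension exactly $b$; thus only faces $F$ with $\dim F<b$ contribute, and all of these lie in $\Gamma$. This yields exactly the asserted criterion. I expect no serious obstacle beyond the input Proposition \ref{PCM CM}, which does the substantive work; the only point requiring care is the bookkeeping of dimensions together with the degenerate low-dimensional cases — the empty face, the purity of $\Gamma$ and its links, and the behaviour of reduced homology near degree $-1$.
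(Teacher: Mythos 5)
Your argument is correct, but it takes a genuinely different route from the paper. The paper proves Proposition \ref{pro 1} in one step from Lemma \ref{depth Hochster}: by the Grothendieck--Hochster depth formula, $\dep\K[\Delta]$ is the minimum of $|F|+i+1$ over pairs with $\widetilde H_i(\link_\Delta(F);\K)\neq0$, and since $\dep\K[\Delta]\le\mdim\K[\Delta]=\mdim\Delta+1$ always holds (Proposition \ref{depth-mdim}), equality is exactly the statement that no such pair has $|F|+i+1<\mdim\Delta+1$, i.e.\ the asserted vanishing. You instead route through Proposition \ref{PCM CM}, apply the classical Reisner criterion to the pure skeleton $\Gamma=\Delta^{\mdim\Delta}$, and transfer the link conditions back to $\Delta$ via the identity $\link_\Gamma(F)=(\link_\Delta(F))^{\,b-|F|}$ together with the fact that $k$-skeleta preserve reduced homology in degrees below $k$; all of these steps check out, including the purity of $\Gamma$ and its links and the vacuousness of the condition for faces of dimension $\ge b$. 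The trade-off: your derivation makes the word ``generalized'' literal, exhibiting the criterion as a formal consequence of Reisner's original theorem applied to a skeleton, but it inherits the full weight of Proposition \ref{PCM CM} (which in the paper rests on Alexander duality, Terai's formula and Eagon--Reiner), whereas the paper's proof is shorter, independent of Section \ref{sec5}'s duality machinery, and shows the criterion is nothing more than a restatement of the depth formula. There is no circularity in your approach, since Proposition \ref{PCM CM} does not depend on Proposition \ref{pro 1}.
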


\quad  In the final sections, we apply these results to graph theory. We show that all path graphs and, more broadly, all chordal graphs are initially Cohen--Macaulay (Proposition \ref{PCM Pn}). We also find that:
\begin{theorem}[Proposition \ref{PCM Cn}]
	Cycle graphs $C_n$ are initially Cohen--Macaulay except when $n\equiv1\pmod{3}$.
\end{theorem}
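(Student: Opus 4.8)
The plan is to obtain both implications from the Reisner-type criterion of Proposition~\ref{pro 1}, using the classical homotopy types of the independence complexes of paths and cycles. Write $\Delta=\Delta_{C_n}$ for the independence complex of $C_n$, whose Stanley--Reisner ideal is the edge ideal $I(C_n)$; the faces of $\Delta$ are the independent sets of $C_n$ and its facets are the maximal independent sets. The associated primes of $\K[\Delta]$ are the $P_G=(x_i\mid i\notin G)$ for facets $G$, with $\coht P_G=|G|$, and since a maximal independent set of $C_n$ must dominate $C_n$ while each vertex dominates exactly three vertices, the minimum cardinality of such a set is the independent domination number $\ceil{n/3}$; hence $m:=\mdim\Delta=\ceil{n/3}-1$. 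By Proposition~\ref{pro 1}, $C_n$ is initially Cohen--Macaulay if and only if $\widetilde H_i(\link_\Delta(F);\K)=0$ for every face $F$ of $\Delta$ and every $i<m-|F|$, so the question reduces to understanding these links.

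Next I would describe the links and invoke known homotopy types. For a face $F$, the link $\link_\Delta(F)$ is the independence complex of the induced subgraph $C_n\setminus N[F]$ obtained by deleting the closed neighbourhood of $F$; when $F\neq\emptyset$ this subgraph is a disjoint union of paths $P_{a_1}\sqcup\dots\sqcup P_{a_r}$ with $\sum_j a_j=n-|N[F]|$, so $\link_\Delta(F)\cong\Delta_{P_{a_1}}*\dots*\Delta_{P_{a_r}}$. I will use Kozlov's computations: $\Delta_{P_a}$ is contractible when $a\equiv1\pmod{3}$ and $\Delta_{P_a}\simeq S^{\ceil{a/3}-1}$ otherwise, while $\Delta_{C_n}\simeq S^{k-1}\vee S^{k-1}$, $S^{k-1}$, or $S^{k}$ according as $n=3k$, $n=3k+1$, or $n=3k+2$. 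Since a join with a contractible space is contractible and $S^p*S^q\simeq S^{p+q+1}$, for $F\neq\emptyset$ either some $a_j\equiv1\pmod{3}$ and $\link_\Delta(F)$ is contractible (so the required vanishing is automatic), or $\link_\Delta(F)\simeq S^D$ with
$$D=\sum_{j}\bigl(\ceil{a_j/3}-1\bigr)+(r-1)=\sum_{j}\ceil{a_j/3}-1$$
(the empty sum, giving $D=-1$, occurring exactly when $F$ is a facet); in the latter case the condition at $F$ is precisely the single inequality $D\geq m-|F|$.

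With this setup the theorem should follow quickly. If $n\equiv1\pmod{3}$, write $n=3k+1$ and take $F=\emptyset$: then $\link_\Delta(\emptyset)=\Delta\simeq S^{k-1}$, so $\widetilde H_{k-1}(\Delta;\K)\neq0$ while $k-1<k=m$, and Proposition~\ref{pro 1} fails; thus $C_n$ is \emph{not} initially Cohen--Macaulay. Conversely, suppose $n\not\equiv1\pmod{3}$. At $F=\emptyset$ the homotopy types above show that the reduced homology of $\Delta_{C_n}$ is concentrated in degree $m$ (namely $k-1$ when $n=3k$ and $k$ when $n=3k+2$), so $\widetilde H_i(\Delta;\K)=0$ for all $i<m$. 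For $F\neq\emptyset$ with $\link_\Delta(F)\simeq S^D$, every vertex of $C_n$ has closed neighbourhood of size $3$, so $|N[F]|\leq3|F|$ and hence $\sum_j a_j=n-|N[F]|\geq n-3|F|$; since each $\ceil{a_j/3}\geq a_j/3$ and $\sum_j\ceil{a_j/3}$ is an integer, it is at least $\ceil{(n-3|F|)/3}=\ceil{n/3}-|F|$, so
$$D=\sum_{j}\ceil{a_j/3}-1\ \geq\ \ceil{n/3}-|F|-1\ =\ m-|F|,$$
exactly as required; the degenerate case $N[F]=V(C_n)$ — where $F$ is a facet and $D=-1$ — is consistent since then $|F|\geq m+1$. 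By Proposition~\ref{pro 1}, $C_n$ is initially Cohen--Macaulay.

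I expect the only genuine subtlety to be the separate handling of $F=\emptyset$: the cycle $C_n$, unlike the paths arising as its proper links, carries reduced homology, and the crux is that this homology lies in degree exactly $m$ precisely when $n\not\equiv1\pmod{3}$, dropping one degree too low when $n\equiv1\pmod{3}$. Everything else is a direct dimension count resting on Kozlov's homotopy computations and the elementary bound $|N[F]|\leq3|F|$. (As a check, the small cases $n=3,4,5$, where $\Delta_{C_n}$ is pure and so initially Cohen--Macaulay coincides with Cohen--Macaulay, can be verified by hand against this scheme.)
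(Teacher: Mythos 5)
Your argument is correct, but it takes a genuinely different route from the paper. The paper's proof is essentially two lines: it quotes Lemma \ref{indim P_n, C_n} for $\mdim R/I(C_n)=\ceil{n/3}$ and Cimpoea\c{s}'s formula $\dep R/I(C_n)=\ceil{(n-1)/3}$, and observes that these coincide exactly when $n\not\equiv1\pmod 3$. You instead avoid the depth formula entirely and verify the generalized Reisner criterion (Proposition \ref{pro 1}) directly, identifying $\link_\Delta(F)$ for $F\neq\emptyset$ with the independence complex of a disjoint union of paths and invoking Kozlov's homotopy types of $\Delta_{P_a}$ and $\Delta_{C_n}$; in effect you recompute the depth via Hochster's formula. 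What the paper's route buys is brevity at the cost of an external citation for the depth; what yours buys is a self-contained illustration of the new criterion and a transparent explanation of \emph{where} the failure occurs for $n\equiv1\pmod3$ (the top homology of $\Delta_{C_{3k+1}}\simeq S^{k-1}$ sits one degree below $\mdim\Delta=k$), plus an independent check that all proper links behave via the clean bound $|N[F]|\le 3|F|$. Two small points to tighten: your domination argument only gives the lower bound $\ge\ceil{n/3}$ for the minimum facet size, so either exhibit a maximal independent set of that size or simply cite Lemma \ref{indim P_n, C_n}; and you should state explicitly that you are taking Kozlov's computations of the homotopy types of $\Delta_{P_a}$ and $\Delta_{C_n}$ as known input, since they carry the same evidentiary weight as the depth formula the paper cites.
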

We provide criteria for co-chordal graphs:
\begin{theorem}[Proposition \ref{PCM co-chordal}]
	The independence complex of a co-chordal graph is initially Cohen--Macaulay if and only if it is weakly connected.
\end{theorem}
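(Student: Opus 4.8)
The plan is to reduce everything to Proposition~\ref{PCM CM}. Write $\Delta$ for the independence complex of $G$; since the independent sets of $G$ are exactly the cliques of the complement $H:=G^{c}$, the complex $\Delta$ is the clique complex of $H$, and $H$ is chordal by assumption. By Proposition~\ref{PCM CM}, $\Delta$ is initially Cohen--Macaulay if and only if its initial dimension skeleton $\Sigma:=\Delta^{\mdim\Delta}$ is Cohen--Macaulay; here $\Sigma$ is the subcomplex of $\Delta$ spanned by the faces of dimension $\mdim\Delta$, and it is pure of that dimension because $\Delta$ has no facet of smaller dimension. So it is enough to prove that $\Sigma$ is Cohen--Macaulay exactly when $\Delta$ is weakly connected.

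For the implication ``initially Cohen--Macaulay $\Rightarrow$ weakly connected'' I would argue contrapositively. If $\Delta$ is not weakly connected then, unwinding the definition, the faces of $\Delta$ of dimension $\mdim\Delta$ cannot all be linked through faces of dimension $\mdim\Delta-1$; equivalently $\Sigma=\Sigma_{1}\cup\Sigma_{2}$ with each $\Sigma_{i}$ a proper subcomplex and $\dim(\Sigma_{1}\cap\Sigma_{2})<\mdim\Delta-1$. When $\mdim\Delta\geq1$ this means $\Sigma$ is not connected in codimension one, hence not Cohen--Macaulay; concretely, feeding $\Sigma=\Sigma_{1}\cup\Sigma_{2}$ into the Mayer--Vietoris sequence produces a nonzero class in $\widetilde{H}_{i}(\Sigma;\K)$ for some $i<\dim\Sigma$, contradicting Reisner's criterion (Proposition~\ref{pro 1}) at the face $\emptyset$. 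When $\mdim\Delta=0$ the complex $\Sigma$ is a finite set of points, automatically Cohen--Macaulay, and $\Delta$ is weakly connected by convention, so this case cannot occur. Hence $\Delta$ initially Cohen--Macaulay forces $\Delta$ weakly connected.

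For the converse I would induct on $|V(H)|$, using that a chordal graph has a simplicial vertex (indeed a perfect elimination ordering). If $\mdim\Delta=0$ then $\Sigma$ is $0$-dimensional and Cohen--Macaulay, so we are done; otherwise pick a simplicial vertex $v$ of $H$. Then $N_{H}(v)$ is a clique, so $\link_{\Delta}(v)$ is the full simplex on $N_{H}(v)$ and the links inside it have vanishing reduced homology, while $\del_{\Delta}(v)$ is the clique complex of the chordal graph $H-v$, that is, the independence complex of the smaller co-chordal graph $G':=(H-v)^{c}$. Granting that $v$ can be chosen so that this independence complex is again weakly connected and $\mdim$ does not drop, the induction hypothesis together with Propositions~\ref{PCM CM} and~\ref{pro 1} give $\widetilde{H}_{i}(\link_{\Delta}(F);\K)=0$ for $i<\mdim\Delta-|F|$ for every face $F$ with $v\notin F$; the decomposition $\Delta=\del_{\Delta}(v)\cup\bigl(v\ast\link_{\Delta}(v)\bigr)$, whose pieces meet exactly in $\link_{\Delta}(v)$ and whose second piece is a cone, then carries this vanishing to the remaining faces via the Mayer--Vietoris sequences for their links. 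By Proposition~\ref{pro 1} this gives that $\Sigma$ is Cohen--Macaulay, hence $\Delta$ is initially Cohen--Macaulay.

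The crux, and the step I expect to be hardest, is the choice of simplicial vertex in the inductive step. A careless choice can both destroy weak connectedness --- for instance if $v$ is the unique vertex joining two otherwise unlinked parts of $\Sigma$ --- and lower $\mdim\Delta$ --- if $N_{H}(v)$ becomes a new small maximal clique of $H-v$ --- either of which spoils the comparison of degree ranges in Proposition~\ref{pro 1}. I anticipate resolving this either by proving that $H$ always admits a simplicial vertex whose deletion preserves both invariants (arguing that if every simplicial vertex failed then $\Delta$ could not have been weakly connected in the first place), or by re-indexing the induction along a fixed perfect elimination ordering of $H$ and propagating the bound on $\mdim$. The remaining verifications --- that taking the link of a face commutes with the union decomposition, and that the homological degrees line up --- are routine once the normalisations of $\mdim\Delta$ and of $\Delta^{\mdim\Delta}$ are pinned down.
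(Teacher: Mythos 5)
Your converse direction has a genuine gap, and you have named it yourself: the entire argument rests on being able to choose a simplicial vertex $v$ of the chordal graph $H$ so that deleting it preserves both weak connectedness and $\mdim$, and this is precisely the step you do not prove. This is not a routine verification. Deleting a simplicial vertex can create $N_H(v)$ as a new \emph{maximal} clique of $H-v$, which can lower $\mdim$, and it can disconnect the initial-dimension skeleton; ruling both out simultaneously for some choice of $v$ would itself require a nontrivial combinatorial argument about chordal graphs, and neither of your two proposed fallbacks is carried out. As written, the inductive step does not close. There is also a smaller but real error in your forward direction: from $\Sigma=\Sigma_1\cup\Sigma_2$ with low-dimensional intersection you claim Mayer--Vietoris yields a nonzero class in $\widetilde{H}_i(\Sigma;\K)$ for some $i<\dim\Sigma$, contradicting Proposition \ref{pro 1} at $F=\emptyset$. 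That is false: two full simplices glued at a vertex form a pure complex that is not strongly connected yet is contractible, so all of its reduced homology vanishes; the obstruction to Cohen--Macaulayness lives in the homology of a \emph{link}, not of $\Sigma$ itself. The correct statement is simply that Cohen--Macaulay complexes are strongly connected, which the paper invokes directly in Lemma \ref{PCM weakly connected}.

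For comparison, the paper's converse avoids the topological induction entirely. By Fr\"oberg's theorem \cite{Froberg1990}, $I_\Delta$ has a linear resolution when $G$ is co-chordal, and the formula of \cite{Katzman2006} then identifies $\pdim\K[\Delta]$ with $n-\mdim\Delta-1=\bight I_\Delta$; Proposition \ref{pdim-bight} converts the equality $\pdim\K[\Delta]=\bight I_\Delta$ into the initially Cohen--Macaulay property. If you want to salvage your approach, the missing lemma on simplicial vertices is where all the work lies; otherwise the homological route through linear resolutions is the efficient one here.
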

As a corollary, we show that the independence complex of a co-chordal graph is sequentially Cohen--Macaulay if and only if it is stably connected (Corollary \ref{SCM stably connected}). Furthermore, we define and classify bi-initially Cohen--Macaulay graphs. These are graphs whose independence complexes are initially Cohen--Macaulay and whose Stanley--Reisner ideals admit a degree resolution (Proposition \ref{bi-PCM graphs}). We conclude by classifying all graphs for which the projective dimension equals the maximum vertex degree, a problem previously studied only in special cases.

\begin{theorem}[Proposition \ref{pdim max}]
	For a graph $G$, the projective dimension of $\K[\Delta_G]$ equals its maximum vertex degree if and only if $\Delta_G$ is initially Cohen--Macaulay and has a minimum facet containing a free vertex.
\end{theorem}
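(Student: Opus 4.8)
The plan is to reduce the statement to two separate conditions by bounding $\pdim\K[\Delta_G]$ below by the maximum vertex degree in two steps and then asking when both steps are sharp. Write $R=\K[x_1,\dots,x_n]$ with $n=|V(G)|$, so that $\K[\Delta_G]=R/I(G)$, and let $d$ be the maximum vertex degree of $G$. Since $R$ is regular, Auslander--Buchsbaum gives $\pdim\K[\Delta_G]=n-\dep\K[\Delta_G]$. Because $I(G)$ is squarefree, hence radical, $\ass_R\K[\Delta_G]=\Min I(G)$, the associated primes being exactly the $P_C=(x_i:i\in C)$ for $C$ a minimal vertex cover of $G$; thus $\coht P_C=n-|C|$ is the cardinality of the facet $V(G)\setminus C$ of $\Delta_G$, and $\mdim\K[\Delta_G]$ equals the minimum cardinality of a facet of $\Delta_G$. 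A maximal independent set containing a vertex $v$ of degree $d$ is disjoint from $N(v)$, so has at most $n-d$ vertices; hence $\mdim\K[\Delta_G]\le n-d$. Chaining,
$$\pdim\K[\Delta_G]=n-\dep\K[\Delta_G]\ \ge\ n-\mdim\K[\Delta_G]\ \ge\ d .$$
Consequently $\pdim\K[\Delta_G]=d$ if and only if both inequalities are equalities: the first, by Proposition \ref{depth-mdim}, says exactly that $\Delta_G$ is initially Cohen--Macaulay, and the second says that the minimum facet cardinality equals $n-d$.

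It then remains to translate ``minimum facet cardinality $=n-d$'' into ``some minimum facet contains a free vertex.'' First I would record the combinatorial dictionary: a vertex $w$ is free in $\Delta_G$ (i.e.\ lies in a unique facet) if and only if $V(G)\setminus N(w)$ is an independent set, and in that case $V(G)\setminus N(w)$ is the unique facet through $w$, of cardinality $n-\deg(w)$. Indeed, if $V(G)\setminus N(w)$ contains an edge $\{a,b\}$ then $\{w,a\}$ and $\{w,b\}$ are independent and extend to two distinct facets through $w$; conversely, if $V(G)\setminus N(w)$ is independent then it is plainly the largest independent set containing $w$ and is itself maximal. Granting this: if a minimum facet $F$ contains a free vertex $w$, then $F=V(G)\setminus N(w)$ has cardinality $n-\deg(w)\ge n-d$, while the minimum facet cardinality is always $\le n-d$, so $|F|=n-d$ (and $\deg(w)=d$). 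Conversely, if the minimum facet cardinality is $n-d$, pick $v$ of degree $d$; any facet through $v$ lies inside $V(G)\setminus N(v)$, which has exactly $n-d$ vertices, yet must have at least $n-d$ vertices, so it equals $V(G)\setminus N(v)$. Thus $V(G)\setminus N(v)$ is a facet of minimum cardinality, it is the unique one through $v$, and $v$ is a free vertex of it. This yields the equivalence and completes the proof.

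The homological half is routine assembly (Auslander--Buchsbaum, radicality of the edge ideal, and Proposition \ref{depth-mdim}); the part that needs care — and which I expect to be the main obstacle — is getting the free-vertex dictionary exactly right and, in the converse direction, noticing that once the numerical condition holds, a maximum-degree vertex is automatically forced to sit inside a minimum facet. I would also sanity-check the degenerate case of an edgeless graph, where $\pdim\K[\Delta_G]=0=d$, the full vertex set is the unique facet, and every vertex is free, to confirm that the conventions for $\mdim$ and facet cardinality stay consistent throughout.
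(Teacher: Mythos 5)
Your proof is correct and follows essentially the same route as the paper: the chain $\pdim\K[\Delta_G]=n-\dep\K[\Delta_G]\ge n-\mdim\K[\Delta_G]=\bight I(G)\ge d$, with equality in the first step characterized by the initially Cohen--Macaulay property (Proposition \ref{pdim-bight}) and in the second by the free-vertex condition (Proposition \ref{big max}). Your inline free-vertex dictionary is in fact a slightly more careful version of the converse direction of Proposition \ref{big max}, but the underlying decomposition is identical.
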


\quad This article is structured as follows. We first review essential background concepts in Section \ref{sec2}. The core algebraic theory of initially Cohen--Macaulay modules, including their behavior with regular sequences and filtrations, is developed in Section \ref{sec3}. In Section \ref{sec5}, we apply this theory to Stanley--Reisner rings, establishing several combinatorial characterizations. Section \ref{sec6} investigates applications to graph theory, and concludes with our classification of graphs where projective dimension equals maximum vertex degree.

\subsection*{Acknowledgement}
The author gratefully thanks Dr. Chwas Ahmed for his fruitful discussions on this work and for his generous willingness to review the manuscript in detail. His careful reading and thoughtful comments have significantly strengthened this paper.

\section{Preliminaries}\label{sec2}
\quad We recall fundamental concepts from commutative algebra, simplicial complexes, and graph theory. For a comprehensive background, we refer the reader to \cite{BrunsHerzog1998,Matsumura1989,Stanley1996,Villarreal2015} and the references therein. All rings $R$ are assumed to be commutative and Noetherian, and $N$ denotes an $R$-module.

\subsection{Commutative Algebra}
\quad We first recall standard concepts from commutative algebra. The \emph{height} of a prime ideal $P$, $\hte P$, is the length $h$ of the longest chain of prime ideals $P_0 \subset \dots \subset P_h = P$. The \emph{dimension} of the ring, $\dim R$, is the supremum of all such heights. Dually, the \emph{coheight}, $\coht P = \dim R/P$, is the supremum of lengths of chains starting at $P$. These are related by $\hte P=\dim R_P$ and $\hte P+\coht P\le\dim R$.

For an $R$-module $N$, its \emph{dimension} is $\dim N = \dim R/\ann_RN$, where $\ann_RN$ is the annihilator of $N$, $\ann_RN=\{r\in R\mid rn=0\text{ for all }n\in N\}$. A prime ideal $P$ is \emph{associated} to $N$ if $P=\ann_R(n)$ for some $n\in N$. The set of \emph{associated primes} of $N$ is denoted $\ass_RN$. From this set, we define the \emph{height} $\hte N = \inf\{\hte P \mid P \in\ass_RN\}$ and \emph{big height} $\bight N = \sup\{\hte P \mid P\in\ass_RN\}$. An $R$-module $N$ is \emph{unmixed} if all its associated primes have the same height.

An element $r\in R$ is \emph{$N$-regular} if it is not a zerodivisor on $N$. A sequence $r_1,\dots,r_d \in I$ is a maximal \emph{$N$-regular sequence in $I$} if $r_i$ is $N/(r_1,\dots,r_{i-1})N$-regular for all $i$ and the sequence cannot be extended. The length of such a sequence is the \emph{$I$-depth} of $N$, $\dep_IN$. If $(R,M)$ is a local ring, we write $\dep N = \dep_M N$.

For a standard graded ring $R$ and a finitely generated graded $R$-module $N$, the minimal graded free resolution
$$\cdots\longrightarrow\bigoplus_jR(-j)^{\beta_{i,j}}\longrightarrow\cdots\longrightarrow\bigoplus_jR(-j)^{\beta_{0,j}}\longrightarrow N\longrightarrow 0$$
gives the graded Betti numbers $\beta_{i,j}(N)$. From these, we define the \emph{projective dimension} $\pdim N=\max\{i\mid\beta_{i,j}(N)\ne0\}$, the \emph{Castelnuovo--Mumford regularity} $\reg N=\max\{j-i\mid\beta_{i,j}(N)\ne0\}$, and the \emph{maximum degree} $\deg N=\max\{j\mid\beta_{0,j}(N)\ne0\}$. The resolution is an \emph{$r$-linear resolution} if $\beta_{i,j}(N) \ne 0$ only for $j=i+r$, and a \emph{degree resolution} if $\reg N=\deg N$.

\subsection{Simplicial Complexes}
\quad Let $X=\{x_1,\dots,x_n\}$ be a finite vertex set. A \emph{simplicial complex} $\Delta$ on $X$ is a collection of subsets of $X$, called \emph{faces}, such that $\Delta$ is closed under taking subsets. The \emph{dimension} of a face $F$ is $\dim F=|F|-1$, and the \emph{dimension of $\Delta$} is $\dim\Delta=\max\{\dim F\mid F\in\Delta\}$. A maximal face with respect to inclusion is a \emph{facet}. We denote the set of facets by $\f(\Delta)$. A complex is \emph{pure} if all its facets share the same dimension. A complex $\Delta$ is \emph{connected} if any two facets can be joined by a path of facets that overlap. A subset of $X$ not in $\Delta$ is a \emph{non-face}, and $\n(\Delta)$ denotes the set of minimal non-faces.

The \emph{$i$-skeleton} $\Delta^i$ consists of all faces with dimension $\le i$, while the \emph{pure $i$-skeleton} $\Delta^{[i]}$ is the complex generated by all $i$-dimensional faces.

For a face $F\in\Delta$, its \emph{deletion} is $\del_\Delta(F)=\{F'\in\Delta\mid F'\cap F=\emptyset\}$ and its \emph{link} is $\link_\Delta(F)=\{F'\in\Delta\mid F'\cap F=\emptyset, F'\cup F\in\Delta\}$. The \emph{Alexander dual} of $\Delta$ is $\Delta^\vee=\{ X\setminus F\mid F\notin\Delta\}$.

Let $R=\K[x\mid x\in X]$ be a polynomial ring over a field $\K$. The \emph{Stanley--Reisner ideal} $I_\Delta$ is the squarefree monomial ideal generated by the minimal non-faces:
$$I_\Delta=\left(\prod_{x_i\in F}x_i\mid F\in\n(\Delta)\right).$$
The \emph{Stanley--Reisner ring} is the quotient $\K[\Delta]=R/I_\Delta$. The Alexander dual ideal $I^\vee$ is the Stanley--Reisner ideal $I_{\Delta^\vee}$ of the dual complex.

The \emph{$f$-vector} $f(\Delta)=(f_{-1},f_0,\dots,f_{d-1})$ records the number of faces in each dimension $i$. The \emph{$h$-vector} $(h_0, \dots, h_d)$ and the \emph{$h$-polynomial} $h_{\K[\Delta]}(t)=\sum h_i t^i$ are defined by the relation $\sum_{i=0}^df_{i-1}t^i(1-t)^{d-i}=\sum_{i=0}^dh_it^i$.
The \emph{reduced simplicial homology} $\widetilde{H}_q(\Delta;\K)$ measures the $q$-dimensional topological holes of $\Delta$; in particular, $\widetilde{H}_0(\Delta;\K)=0$ measures connectivity.

\subsection{Graphs and Hypergraphs}
\quad A \emph{hypergraph} $\mathcal{H}$ on a vertex set $X$ is a collection of subsets of $X$, called edges, such that no edge properly contains another. A \emph{graph} $G$ is a hypergraph where all edges have size 2. The \emph{edge ideal} $I(\mathcal{H})$ is the squarefree monomial ideal generated by the edges:
$$I(\mathcal{H})=\left(\prod_{x_i\in e}x_i\mid e\in\mathcal{H}\right).$$
A subset $F\subseteq X$ is an \emph{independent set} of $\mathcal{H}$ if $F$ contains no edge $e\in E(\mathcal{H})$. The \emph{independence complex} $\Delta_\mathcal{H}$ is the simplicial complex formed by all independent sets of $\mathcal{H}$.

A \emph{vertex cover} is a subset $C\subseteq X$ that intersects every edge of $\mathcal{H}$. A \emph{clique} in a graph $G$ is a set of vertices that are all pairwise adjacent. The \emph{clique complex} $\Delta(G)$ is the simplicial complex formed by all cliques of $G$.

We use the following standard graph families:
\begin{itemize}
	\item \emph{Path graph} $P_n$: Edges $\{x_i, x_{i+1}\}$ for $1 \le i < n$.
	\item \emph{Cycle graph} $C_n$: The graph $P_n$ plus the edge $\{x_1, x_n\}$.
	\item \emph{Chordal graph}: A graph containing no induced cycles $C_k$ for $k\ge4$.
	\item \emph{Co-chordal graph}: A graph whose complement $\overline{G}$ is chordal.
\end{itemize}

%%%%%%%%%%%%%%%%%%%%%%%%%%%%%%%%%%%%%%%%%%%%%%%%%%%%%%%%%%%%%%%%%%

\section{Initial Dimension and Initially Cohen--Macaulay Modules}
\label{sec3}

\quad This section establishes the foundational theory of initially Cohen--Macaulay modules. We begin by defining the \emph{initial dimension} of an $R$-module, an invariant based on the coheights of its associated primes. We then use this to define the initially Cohen--Macaulay property and explore its primary characterizations, connecting it to classical module properties. Throughout, $R$ remains a commutative Noetherian ring and $N$ an $R$-module.

\begin{definition}
	The \emph{initial dimension} of $N$, denoted $\mathrm{indim}\ N$, is the infimum of the coheights of its associated primes:
	$$\mathrm{indim}\ N=\inf\{\mathrm{coht}\ P\mid P\in\mathrm{Ass}_RN\}.$$
\end{definition}

\begin{remark}
	\begin{enumerate}
		\item This definition contrasts with the Krull dimension. Since $\coht P=\dim R/P$, we have:
		$$\mathrm{indim}\ N=\inf\{\dim R/P\mid P\in\mathrm{Ass}_RN\},\quad\dim N=\sup\{\dim R/P\mid P\in\mathrm{Ass}_RN\}.$$
		\item For any $P\in\mathrm{Ass}_RN$, the following inequalities hold, as $\mathrm{ht}\ P\ge0$:
		$$\mathrm{indim}\ R\le\mathrm{coht}\ P \le \mathrm{ht}\ P+\mathrm{coht}\ P\le\dim R.$$
		\item The following bounds are immediate:
		$$\mathrm{indim}\ R\le\mathrm{bight}\ N+\mathrm{indim}\ N\le\dim R\ \text{ and }\ \mathrm{indim}\ R\le\mathrm{ht}\ N+\dim N\le\dim R.$$
	\end{enumerate}
\end{remark}

\begin{example}
	Let $R=\K[x,y]$ and $I=(x^2,xy)$. The associated primes are $\mathrm{Ass}_RR/I=\{(x),(x,y)\}$. Their coheights are $\mathrm{coht}(x)=1$ and $\mathrm{coht}(x,y)=0$. Therefore, $\mathrm{indim}\ R/I=0$ while $\dim R/I=1$.
\end{example}

\begin{lemma}
	\label{counmixed-implies-minimal}
	Let $P$ be an associated prime of $N$ such that $\mathrm{ht}\ P+\mathrm{coht}\ P=\mathrm{indim}\ N$. Then $P$ is a minimal prime.
\end{lemma}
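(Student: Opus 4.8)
The plan is to squeeze $\hte P=0$ out of the single numerical hypothesis and then appeal to the elementary fact that a height-zero associated prime is a minimal prime of $N$; so the argument is essentially one line of arithmetic followed by one standard observation.

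First I would dispatch the degenerate case $N=0$: then $\ass_R N=\emptyset$, no prime $P$ as in the statement exists, and the assertion is vacuous. So assume $N\neq 0$, so that $\ass_R N\neq\emptyset$ and (as $R$ is Noetherian, and in the intended applications local with $\dim R<\infty$) $\mdim N$ and all the heights and coheights below are finite nonnegative integers. Since $P$ is one of the associated primes over which the infimum defining $\mdim N$ is taken, we have $\coht P\ge\mdim N$. Feeding this, together with $\coht P\ge 0$, into the hypothesis $\hte P+\coht P=\mdim N$ yields
$$\hte P \;=\; \mdim N-\coht P \;\le\; \mdim N-\mdim N \;=\; 0,$$
hence $\hte P=0$ (and, as a byproduct, $\coht P=\mdim N$).

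It remains to convert $\hte P=0$ into minimality. A prime of height $0$ is a minimal element of $\spec R$, so no prime lies strictly below $P$ at all, in particular none inside $\supp_R N=V(\ann_R N)$. Since $P\in\ass_R N\subseteq\supp_R N$, this says exactly that $P$ is minimal in $\supp_R N$, i.e.\ $P\in\Min_R N$, as claimed.

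There is no genuine obstacle: the only points needing a word of care are that the subtraction $\mdim N-\coht P$ is legitimate (ensured by finiteness of the quantities under the Noetherian, and in practice local, hypotheses) and that one uses the standard containment $\ann_R N\subseteq P$ for $P\in\ass_R N$, which makes ``height $0$ in $R$'' already imply ``minimal over $\ann_R N$''. I would expect this lemma to be invoked when analysing counmixedness (as in Proposition \ref{ICM-unmixed}): a prime realizing the equality $\hte P+\coht P=\mdim N$ is forced to sit at the bottom of $\spec R$, hence cannot be an embedded prime.
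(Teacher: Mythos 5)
Your argument is correct and coincides with the paper's own proof: both substitute the defining inequality $\mdim N\le\coht P$ into the hypothesis to force $\hte P\le 0$, hence $\hte P=0$, and then read off minimality. The extra remarks on the vacuous case $N=0$ and on passing from height zero in $R$ to minimality in $\supp_R N$ are harmless elaborations of the same one-line computation.
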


\begin{proof}
	The hypothesis is $\mathrm{ht}\ P+\mathrm{coht}\ P=\mathrm{indim}\ N$. By definition, $\mathrm{indim}\ N \le \mathrm{coht}\ P$. Substituting this into the hypothesis gives $\mathrm{ht}\ P + \mathrm{coht}\ P \le \mathrm{coht}\ P$, which implies $\mathrm{ht}\ P \le 0$. Since height is non-negative, so $\mathrm{ht}\ P = 0$, and thus $P$ is minimal.
\end{proof}

\begin{definition}
	\label{counmixed}
	An $R$-module $N$ is \emph{counmixed} if all its associated primes have the same coheight (equivalently, the same dimension).
\end{definition}

\begin{lemma}
	\label{counmixed-equiv}
	An $R$-module $N$ is counmixed if and only if $\mathrm{indim}\ N=\dim N$.
\end{lemma}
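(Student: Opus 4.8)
The plan is to unwind both sides directly against the defining formulas, using the description of $\dim N$ already recorded in the Remark. Recall that for a (finitely generated) module $N$ we have $\mathrm{indim}\ N=\inf\{\coht P\mid P\in\ass_RN\}$ and $\dim N=\sup\{\coht P\mid P\in\ass_RN\}$; the latter is the standard fact that $\dim R/\ann_RN=\max\{\dim R/P\mid P\in\Min(\ann_RN)\}$ together with $\Min(\ann_RN)\subseteq\ass_RN$. So the whole statement reduces to the elementary observation that, for a nonempty finite set of integers, the infimum equals the supremum precisely when all the elements coincide.

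First I would dispose of the trivial case $N=0$, where $\ass_RN=\emptyset$ and the convention makes the statement vacuous (or one simply restricts to $N\neq0$, as elsewhere in this section). For $N\neq0$ the set $\ass_RN$ is nonempty and finite, so both the infimum and the supremum above are attained.

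For the forward implication, suppose $N$ is counmixed, so every $P\in\ass_RN$ has the same coheight, say $\coht P=c$ for all such $P$. Then $\mathrm{indim}\ N=\inf\{c\}=c$ and $\dim N=\sup\{c\}=c$, hence $\mathrm{indim}\ N=\dim N$.

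For the converse, suppose $\mathrm{indim}\ N=\dim N$, i.e.\ $\inf\{\coht P\mid P\in\ass_RN\}=\sup\{\coht P\mid P\in\ass_RN\}$. For any $P\in\ass_RN$ we have
$$\mathrm{indim}\ N\le\coht P\le\dim N=\mathrm{indim}\ N,$$
forcing $\coht P=\mathrm{indim}\ N$ for every $P\in\ass_RN$. Thus all associated primes share the same coheight, so $N$ is counmixed. There is no real obstacle here; the only point requiring a word of care is the identification $\dim N=\sup\{\coht P\mid P\in\ass_RN\}$ and the bookkeeping for $N=0$.
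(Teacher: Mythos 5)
Your proof is correct and follows the same route as the paper: both identify $\mathrm{indim}\ N$ and $\dim N$ as the infimum and supremum, respectively, of the set $\{\coht P\mid P\in\ass_RN\}$ and observe that these agree exactly when all coheights coincide. Your added care about the $N=0$ case and the justification of $\dim N=\sup\{\coht P\mid P\in\ass_RN\}$ via minimal primes is fine but not a different argument.
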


\begin{proof}
	This is a direct consequence of the definitions, as $\mathrm{indim}\ N$ is the infimum of the set $\{\dim R/P\mid P\in\mathrm{Ass}_RN\}$ and $\dim N$ is its supremum. These values are equal if and only if the set contains a single value.
\end{proof}

\begin{corollary}
	\label{domain-counmixed}
	Every integral domain is counmixed.
\end{corollary}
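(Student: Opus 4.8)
The plan is to reduce the statement to the standard computation of the associated primes of a domain. Recall that for a ring $R$ regarded as a module over itself, a prime $P$ lies in $\ass_R R$ exactly when $P=\ann_R(r)$ for some $r\in R$. First I would note that $(0)=\ann_R(1)$ always holds, so $(0)\in\ass_R R$ whenever $(0)$ is a prime ideal, which is precisely the case when $R$ is an integral domain. Conversely, in a domain every nonzero $r$ satisfies $\ann_R(r)=(0)$, while $r=0$ gives the improper ideal $R$, which is not prime. Hence $\ass_R R=\{(0)\}$.

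With this in hand the conclusion follows in two equivalent ways. Directly, since $\ass_R R$ is a singleton, all of its members trivially share the same coheight, so $R$ is counmixed by Definition \ref{counmixed}. Alternatively, one computes $\mdim R=\coht(0)=\dim R/(0)=\dim R$, and then applies Lemma \ref{counmixed-equiv}, which characterizes counmixedness by the equality $\mdim R=\dim R$.

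I do not expect any genuine obstacle here: the only input beyond the definitions already recorded is the well-known fact that $\ass_R R=\{(0)\}$ for an integral domain, and the proof above makes even that self-contained. The statement is best viewed as an immediate illustration of Lemma \ref{counmixed-equiv}, provided as a sanity check before the theory is developed further.
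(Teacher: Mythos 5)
Your proof is correct and follows essentially the same route as the paper: identify $\ass_R R=\{(0)\}$ for a domain, compute $\mdim R=\coht(0)=\dim R$, and invoke Lemma \ref{counmixed-equiv}. The extra detail justifying $\ass_R R=\{(0)\}$ is fine but not needed beyond what the paper assumes.
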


\begin{proof}
	As $R$ is a domain, $\mathrm{Ass}\ R = \{(0)\}$. Thus $\mathrm{indim}\ R = \mathrm{coht}(0) = \dim R$. The result follows from Lemma \ref{counmixed-equiv}.
\end{proof}

\begin{corollary}
	If $R=\K[x_1,\dots,x_n]$ is a polynomial ring over a field $\K$, it is counmixed, and $\mathrm{indim}\ R=n$.
\end{corollary}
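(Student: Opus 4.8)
The plan is to deduce this directly from Corollary \ref{domain-counmixed}, since a polynomial ring over a field is in particular an integral domain.

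First I would observe that $R=\K[x_1,\dots,x_n]$ is an integral domain, so Corollary \ref{domain-counmixed} applies and gives that $R$ is counmixed. By the proof of that corollary (or by Lemma \ref{counmixed-equiv} together with $\ass R=\{(0)\}$), we moreover have $\mathrm{indim}\ R=\coht(0)=\dim R$. It then remains only to recall the classical fact that the Krull dimension of the polynomial ring in $n$ variables over a field equals $n$, which yields $\mathrm{indim}\ R=n$.

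There is essentially no obstacle here: the statement is an immediate specialization of the preceding corollary combined with the standard computation $\dim\K[x_1,\dots,x_n]=n$ (see, e.g., \cite[Theorem A.1 or Corollary 13.4]{Matsumura1989} or \cite{BrunsHerzog1998}). If one wished to make the proof self-contained one could note that $(0)$ is the unique minimal prime and the chain $(0)\subset(x_1)\subset(x_1,x_2)\subset\dots\subset(x_1,\dots,x_n)$ shows $\dim R\ge n$, with the reverse inequality being the content of the cited dimension theorem; but invoking the known value of $\dim\K[x_1,\dots,x_n]$ is the cleanest route.
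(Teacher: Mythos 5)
Your proof is correct and follows exactly the same route as the paper: apply Corollary \ref{domain-counmixed} because $R$ is an integral domain, and combine with the standard fact that $\dim\K[x_1,\dots,x_n]=n$. No differences worth noting.
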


\begin{proof}
	The ring $R$ is an integral domain of dimension $n$, so Corollary \ref{domain-counmixed} applies.
\end{proof}

\begin{lemma}
	\label{counmixed-ring}
	If $R$ is a counmixed ring, then it is unmixed. Furthermore, for every prime $P\subset R$, we have
	$$\mathrm{ht}\ P+\mathrm{coht}\ P=\dim R.$$
\end{lemma}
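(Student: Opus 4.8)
The plan is to reduce everything, through Lemma~\ref{counmixed-equiv}, to the single observation that a counmixed ring satisfies $\mdim R=\dim R$, so that \emph{every} associated prime of $R$ has coheight exactly $\dim R$. Substituting such a prime $P\in\ass R$ into the chain of inequalities $\mdim R\le\coht P\le\hte P+\coht P\le\dim R$ from the remark above, the two extreme terms now coincide, so each inequality in the chain is forced to be an equality. In particular $\hte P=0$ for every $P\in\ass R$, whence all associated primes have the same (zero) height and $R$ is unmixed; simultaneously $\hte P+\coht P=\dim R$ holds for every associated prime, and $\ass R=\Min R$ since every associated prime is now seen to be minimal (e.g.\ by Lemma~\ref{counmixed-implies-minimal}).

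It remains to upgrade the identity $\hte P+\coht P=\dim R$ from associated primes to arbitrary primes. First it holds for minimal primes: a minimal prime $Q$ is associated, so $\hte Q=0$ and $\coht Q=\mdim R=\dim R$; in particular $R$ is equidimensional, all of its minimal primes having coheight $\dim R$. For an arbitrary prime $P$, the bound $\hte P+\coht P\le\dim R$ is the general one from the preliminaries, so only the reverse inequality needs proof. I would choose a minimal prime $Q\subseteq P$ realizing the height of $P$, that is, with $\hte_R P=\hte_{R/Q}(P/Q)$, set $D=R/Q$, and compute
\[
\hte_R P+\coht_R P=\hte_D(P/Q)+\dim\bigl(D/(P/Q)\bigr)=\dim D=\dim R,
\]
the last step being the equidimensionality just established.

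The one genuinely substantive ingredient is the second equality in that display, $\hte_D(P/Q)+\dim(D/(P/Q))=\dim D$: this is the dimension formula $\hte+\coht=\dim$ for primes of a domain, a catenarity statement. It is valid for affine algebras over a field via Noether normalization, hence for every Stanley--Reisner ring $\K[\Delta]$ and for all the modules appearing in the later sections, so the argument above applies unconditionally in every situation this paper actually uses. I therefore expect this to be the only real obstacle, and a benign one here: for a fully general Noetherian ring the ``for every prime'' clause should be read with the extra hypothesis that $R$ is catenary with all maximal chains of primes of length $\dim R$ --- catenarity alone does not suffice, as $\K[x]_{(x)}[y]$ shows --- whereas the unmixedness of $R$ and the identity for associated and for minimal primes need no such assumption.
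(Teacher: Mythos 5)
Your first paragraph is exactly the paper's argument: pass through Lemma~\ref{counmixed-equiv} to get $\mdim R=\dim R$, squeeze the chain $\dim R=\mdim R\le\hte P+\coht P\le\dim R$ for $P\in\ass R$, and conclude $\hte P=0$ via Lemma~\ref{counmixed-implies-minimal}. Where you diverge is the second assertion, and here you are in fact \emph{more} careful than the paper. The paper's proof simply writes: every prime $Q$ contains a minimal (associated) prime $P$ with $\hte P=0$ and $\coht P=\dim R$, ``giving $\hte Q+\coht Q=\coht P=\dim R$'' --- with no justification for that last equality. That equality is precisely the dimension formula $\hte_{R/P}(Q/P)+\dim\bigl((R/P)/(Q/P)\bigr)=\dim R/P$ in the domain $R/P$ (together with $\hte_RQ=\hte_{R/P}(Q/P)$), which is the step you isolate and correctly identify as the genuine content. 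Your example $\K[x]_{(x)}[y]$ is decisive: it is a Noetherian domain, hence counmixed by Corollary~\ref{domain-counmixed}, of dimension $2$, yet the maximal ideal $(xy-1)$ has height $1$ and coheight $0$, so the ``for every prime'' clause of the lemma is literally false for general Noetherian rings. The statement and the paper's proof are therefore only valid under an implicit hypothesis (e.g.\ $R$ catenary and equidimensional in the strong sense that all maximal chains have length $\dim R$, as for the Cohen--Macaulay local rings of Proposition~\ref{pdim-bight} and the graded quotients of polynomial rings used throughout Sections~\ref{sec5}--\ref{sec6}), which is exactly the caveat you record. In short: same proof for unmixedness and for the identity on associated/minimal primes; for arbitrary primes you supply the missing catenarity argument that the paper's one-line claim silently assumes.
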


\begin{proof}
	By Lemma \ref{counmixed-equiv}, the hypothesis implies $\mathrm{indim}\ R = \dim R$. For any $P \in \mathrm{Ass}\ R$, the inequality $\dim R = \mathrm{indim}\ R \le \mathrm{ht}\ P + \mathrm{coht}\ P \le \dim R$ forces equality. Lemma \ref{counmixed-implies-minimal} then shows $\mathrm{ht}\ P = 0$ for all $P \in \mathrm{Ass}\ R$, so $R$ is unmixed. The second assertion follows from the fact every primes ideal $Q$ of $R$ contains a minimal (associated) prime $P$ with $\mathrm{ht}\ P = 0$ and $\mathrm{coht}\ P = \dim R$, giving $\mathrm{ht}\ Q+\mathrm{coht}\ Q=\mathrm{coht}\ P=\dim R$.
\end{proof}

\begin{corollary}
	\label{ht-bight}
	If $R$ is counmixed and $N$ is an $R$-module, then:
	$$\mathrm{bight}\ N+\mathrm{indim}\ N=\dim R\quad\text{and}\quad\mathrm{ht}\ N+\dim N=\dim R.$$
\end{corollary}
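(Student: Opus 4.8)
The plan is to deduce both identities directly from Lemma \ref{counmixed-ring}, which tells us that in a counmixed ring $\hte P + \coht P = \dim R$ for \emph{every} prime ideal $P$, and in particular for every $P \in \ass_R N$. Everything beyond this is bookkeeping: passing from that pointwise identity to the suprema and infima over the finite set $\ass_R N$ that define the module invariants involved.

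First I would assume $N \ne 0$, so that $\ass_R N$ is a nonempty finite set (the case $N = 0$ being understood separately). Then, for the first equality, I would substitute $\hte P = \dim R - \coht P$ into the definition of big height:
$$\bight N = \sup_{P \in \ass_R N} \hte P = \sup_{P \in \ass_R N}\bigl(\dim R - \coht P\bigr) = \dim R - \inf_{P \in \ass_R N} \coht P = \dim R - \mdim N,$$
where the third step uses the elementary fact that $\sup(c - x) = c - \inf x$ for a constant $c$. Rearranging gives $\bight N + \mdim N = \dim R$.

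The second equality follows from the symmetric computation: since $\dim N = \sup\{\coht P \mid P \in \ass_R N\}$ and $\hte N = \inf\{\hte P \mid P \in \ass_R N\}$, the same substitution yields $\hte N = \dim R - \dim N$, and hence $\hte N + \dim N = \dim R$.

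I do not anticipate a genuine obstacle here: the entire content lies in Lemma \ref{counmixed-ring}, and this corollary merely records what it says at the level of the module $N$. The one point that warrants care is the distinction between the invariants defined by a supremum over $\ass_R N$ (namely $\bight N$ and $\dim N$) and those defined by an infimum ($\mdim N$ and $\hte N$), since it is precisely the exchange of $\sup$ and $\inf$ that pairs $\bight$ with $\mdim$ in the first identity and $\hte$ with $\dim$ in the second.
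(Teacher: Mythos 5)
Your proof is correct and is exactly the argument the paper intends: the corollary is stated with no written proof precisely because it follows by applying Lemma \ref{counmixed-ring} pointwise to each $P\in\ass_RN$ and then passing to the supremum/infimum defining $\bight$, $\mdim$, $\hte$, and $\dim$. Your care in pairing the sup-type invariants with the inf-type ones, and in setting aside $N=0$, matches the paper's conventions.
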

\qed

\begin{example}
	The converse of Lemma \ref{counmixed-ring} is false; unmixedness does not imply counmixedness. Let $S=\K[x,y,z]/(xy,xz)$. The associated primes are $\mathrm{Ass}\ S=\{(\bar{x}),(\bar{y},\bar{z})\}$. Both have height zero, so $S$ is unmixed. However, their coheights differ, as $\mathrm{coht}(\bar{x})=2$ and $\mathrm{coht}(\bar{y},\bar{z})=1$. Thus, $S$ is not counmixed.
\end{example}

We now introduce the central definition of this paper. We restrict our setting to a commutative Noetherian local ring $(R,M)$.

\begin{definition}
	Let $(R,M)$ be a local ring. A finitely generated $R$-module $N$ is \emph{initially Cohen--Macaulay} if $N=0$ or
	$$\mathrm{depth}\ N=\mathrm{indim}\ N.$$
	If $R$ itself satisfies this as an $R$-module, $R$ is an \emph{initially Cohen--Macaulay ring}.
\end{definition}

\begin{proposition}\label{depth-mdim}
	Let $(R,M)$ be a local ring and $N$ a finitely generated $R$-module. Then
	$$\mathrm{depth}\ N\le\mathrm{indim}\ N.$$
	Equality holds if and only if $N$ is initially Cohen--Macaulay.
\end{proposition}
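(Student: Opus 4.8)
The plan is to reduce everything to the classical inequality that the depth of a module is bounded above by the coheight of each of its associated primes, and then to read off the equality statement directly from the definition of the initially Cohen--Macaulay property.

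First I would dispose of the degenerate case. If $N=0$, then $\ass_R N=\emptyset$, so $\mdim N=\inf\emptyset=+\infty$, and by the usual convention $\dep N=+\infty$ as well; the inequality is then a (vacuous) equality, and $N$ is initially Cohen--Macaulay by definition. So I may assume $N\neq 0$, in which case $\ass_R N$ is a nonempty finite set and the infimum defining $\mdim N$ is in fact attained.

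Next, the core input is the bound $\dep N\le\dim R/P=\coht P$, valid for every $P\in\ass_R N$; this is \cite[Theorem 17.2]{Matsumura1989}. (If one wants a self-contained argument, it proceeds by induction on $\dep N$: the case $\dep N=0$ is trivial, and if $\dep N>0$ one picks an $N$-regular element $x\in M$, notes that $x\notin P$ for every $P\in\ass_R N$, and uses that $\ass_R(N/xN)$ contains a prime $Q\supseteq P+xR$, hence with $Q\supsetneq P$ and $\coht Q\le\coht P-1$, while $\dep(N/xN)=\dep N-1$; the claim then follows from the inductive hypothesis applied to $N/xN$.) Taking the infimum over all $P\in\ass_R N$ of this inequality yields
$$\dep N\le\inf\{\coht P\mid P\in\ass_R N\}=\mdim N,$$
which is the asserted inequality.

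Finally, the equality characterization is immediate from what has just been proved together with the definition: since $\dep N\le\mdim N$ always holds, the equality $\dep N=\mdim N$ holds precisely when $N$ is initially Cohen--Macaulay (the case $N=0$ having been dealt with above). I do not expect any genuine obstacle here; the only points requiring care are the bookkeeping around the $N=0$ conventions (infinite depth, empty infimum) and the observation that, for $N\neq 0$, $\mdim N$ is an honest minimum over a finite nonempty set, so that passing to the infimum of the inequalities $\dep N\le\coht P$ again produces an inequality of the same form.
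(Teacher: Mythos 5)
Your proof is correct and follows essentially the same route as the paper: both invoke the classical bound $\dep N\le\coht P$ for each $P\in\ass_R N$ (citing \cite[Theorem 17.2]{Matsumura1989}), take the infimum, and read off the equality case from the definition. Your extra care with the $N=0$ conventions and the sketched inductive proof of the classical bound are fine but not needed beyond what the paper does.
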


\begin{proof}
	The depth of $N$ is bounded by $\dim R/P = \mathrm{coht}\ P$ for every $P \in \mathrm{Ass}_R N$ \cite[Theorem 17.2]{Matsumura1989}. Taking the infimum over all $P \in \mathrm{Ass}_R N$ gives $\mathrm{depth}\ N \le \mathrm{indim}\ N$. The module $N$ is initially Cohen--Macaulay precisely when this inequality is an equality.
\end{proof}

\begin{corollary}\label{maximal-ICM}
	Let $(R,M)$ be a local ring and $N$ a finitely generated $R$-module. Then $M\in\mathrm{Ass}_RN$ if and only if $\mathrm{indim}\ N=0$. In particular, any module with $\mathrm{indim}\ N=0$ is initially Cohen--Macaulay.
\end{corollary}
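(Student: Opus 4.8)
The plan is to prove the biconditional by treating the two implications separately, exploiting the finiteness of $\mathrm{Ass}_R N$ to know that the infimum defining $\mathrm{indim}\ N$ is actually attained, and then to read off the ``in particular'' clause from Proposition \ref{depth-mdim}. The one preliminary observation I would record is the trivial computation $\mathrm{coht}\ M=\dim R/M=0$, valid because $R/M$ is a field.

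For the forward direction, I would assume $M\in\mathrm{Ass}_R N$. Then $0=\mathrm{coht}\ M$ is one of the values in $\{\mathrm{coht}\ P\mid P\in\mathrm{Ass}_R N\}$, so the infimum $\mathrm{indim}\ N$ of this set is $\le 0$; since coheights are non-negative, this forces $\mathrm{indim}\ N=0$.

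For the converse, I would assume $\mathrm{indim}\ N=0$. Because $R$ is Noetherian and $N$ is finitely generated, $\mathrm{Ass}_R N$ is a finite set, and it is nonempty (were it empty, the infimum would be $+\infty$, not $0$), so the infimum is achieved: there is some $P\in\mathrm{Ass}_R N$ with $\mathrm{coht}\ P=\dim R/P=0$. A prime with $\dim R/P=0$ is a maximal ideal, and in the local ring $(R,M)$ the unique maximal ideal is $M$; hence $P=M\in\mathrm{Ass}_R N$.

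Finally, for the ``in particular'' statement, note that $\mathrm{indim}\ N=0$ forces $N\neq 0$ (the zero module has $\mathrm{indim}$ equal to $\inf\emptyset=+\infty$), so Proposition \ref{depth-mdim} gives $0\le\mathrm{depth}\ N\le\mathrm{indim}\ N=0$, whence $\mathrm{depth}\ N=\mathrm{indim}\ N$ and $N$ is initially Cohen--Macaulay. I do not expect a real obstacle here; the only point that deserves to be stated with care is that the infimum in the definition of $\mathrm{indim}\ N$ is attained (which rests on the finiteness of $\mathrm{Ass}_R N$), together with the standard fact that a prime of coheight zero in a local ring must be the maximal ideal.
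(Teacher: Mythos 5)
Your proposal is correct and follows essentially the same route as the paper: $\mathrm{coht}\ M=0$ gives the forward direction, a prime of coheight zero in a local ring must be $M$ for the converse, and the squeeze $0\le\mathrm{depth}\ N\le\mathrm{indim}\ N=0$ yields the ``in particular'' clause. Your extra care about the infimum being attained is harmless but not strictly needed, since an infimum of $0$ over a nonempty set of non-negative integers is automatically achieved.
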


\begin{proof}
	If $M\in\mathrm{Ass}_R N$, then $\mathrm{indim}\ N \le \mathrm{coht}\ M = 0$. Conversely, if $\mathrm{indim}\ N = 0$, there exists $P \in \mathrm{Ass}_R N$ with $\mathrm{coht}\ P = 0$, which implies $P=M$. Such modules are initially Cohen--Macaulay because $0 \le \mathrm{depth}\ N \le \mathrm{indim}\ N = 0$.
\end{proof}

\begin{corollary}
	Let $N$ be a finitely generated $R$-module and $P\in\mathrm{Ass}_R N$. Then the localization $N_P$ is an initially Cohen--Macaulay $R_P$-module, and $\mathrm{indim}\ N_P=0$.
\end{corollary}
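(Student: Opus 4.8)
The plan is to reduce the statement to Corollary \ref{maximal-ICM} applied to the local ring $R_P$. The essential input is the standard behavior of associated primes under localization: for the multiplicatively closed set $S=R\setminus P$ one has
$$\mathrm{Ass}_{R_P}(N_P)=\{\,QR_P\mid Q\in\mathrm{Ass}_R N,\ Q\subseteq P\,\}$$
(see e.g. \cite[Theorem 6.2]{Matsumura1989}). First I would feed the hypothesis $P\in\mathrm{Ass}_R N$ into this formula: taking $Q=P$ (which satisfies $Q\subseteq P$) shows $PR_P\in\mathrm{Ass}_{R_P}(N_P)$; in particular $N_P\ne 0$.

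Next, I would observe that $PR_P$ is the maximal ideal of the local ring $R_P$, so $\mathrm{coht}(PR_P)=\dim R_P/PR_P=0$. Using the definition of initial dimension over $\mathrm{Ass}_{R_P}(N_P)$ together with the membership just established gives $0\le\mathrm{indim}\ N_P\le\mathrm{coht}(PR_P)=0$, hence $\mathrm{indim}\ N_P=0$.

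Finally, $N_P$ is a finitely generated module over the local ring $(R_P,PR_P)$ since $N$ is finitely generated over $R$, so Corollary \ref{maximal-ICM} applies directly: $\mathrm{indim}\ N_P=0$ forces $N_P$ to be initially Cohen--Macaulay. (Equivalently, one may invoke Proposition \ref{depth-mdim} together with $0\le\mathrm{depth}\ N_P\le\mathrm{indim}\ N_P=0$.) The only point requiring any care is citing the localization formula for $\mathrm{Ass}$ and noting that the membership $P\in\mathrm{Ass}_R N$ survives localization; the rest is formal.
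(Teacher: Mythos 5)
Your proposal is correct and follows exactly the route the paper intends: the corollary is stated immediately after Corollary \ref{maximal-ICM} and left with a \qed, the implicit argument being precisely that $P\in\mathrm{Ass}_RN$ localizes to $PR_P\in\mathrm{Ass}_{R_P}(N_P)$, forcing $\mathrm{indim}\ N_P=0$ and hence the initially Cohen--Macaulay property. Nothing to add.
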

\qed

The next result connects the initially Cohen--Macaulay property to the classical Cohen--Macaulay condition.

\begin{proposition}\label{ICM-unmixed}
	Let $(R,M)$ be a local ring and $N$ a finitely generated $R$-module. Then $N$ is Cohen--Macaulay if and only if $N$ is initially Cohen--Macaulay and counmixed.
\end{proposition}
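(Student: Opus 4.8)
The plan is to prove the biconditional by chaining together the dimension-theoretic characterizations of the two classes. Recall that $N$ is Cohen--Macaulay exactly when $\dep N = \dim N$, and initially Cohen--Macaulay exactly when $\dep N = \mdim N$ (Proposition~\ref{depth-mdim}), while counmixedness is the condition $\mdim N = \dim N$ (Lemma~\ref{counmixed-equiv}). The three equalities $\dep N = \mdim N$, $\mdim N = \dim N$, and $\dep N = \dim N$ are related by the chain of inequalities $\dep N \le \mdim N \le \dim N$, the first being Proposition~\ref{depth-mdim} and the second being the trivial fact that an infimum is at most the corresponding supremum (Remark following the definition of initial dimension). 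So the core of the argument is the elementary observation: given $a \le b \le c$, we have $a = c$ if and only if both $a = b$ and $b = c$.

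First I would handle the forward direction. Suppose $N$ is Cohen--Macaulay, so $\dep N = \dim N$. Since $\dep N \le \mdim N \le \dim N$ and the two ends coincide, all three invariants are equal. Then $\dep N = \mdim N$ gives that $N$ is initially Cohen--Macaulay, and $\mdim N = \dim N$ gives, via Lemma~\ref{counmixed-equiv}, that $N$ is counmixed. (If $N = 0$ the statement is vacuous on both sides, so I would dispose of that case at the outset.)

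For the converse, suppose $N$ is initially Cohen--Macaulay and counmixed. The first condition gives $\dep N = \mdim N$ and the second gives $\mdim N = \dim N$, so by transitivity $\dep N = \dim N$, i.e.\ $N$ is Cohen--Macaulay. Both directions are thus one-line deductions once the characterizations are in place.

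I do not anticipate a genuine obstacle here, since every ingredient has already been established in the excerpt; the only thing to be careful about is the $N=0$ convention and making sure the local hypothesis is invoked only where needed (namely for depth to be well-defined and for Proposition~\ref{depth-mdim} to apply). The proof is essentially bookkeeping of three invariants sandwiched in a chain of inequalities, and the "hard part," if any, is simply recognizing that Proposition~\ref{depth-mdim} and Lemma~\ref{counmixed-equiv} together trivialize the statement.
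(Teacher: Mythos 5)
Your proposal is correct and follows essentially the same route as the paper's proof: both reduce the statement to the chain of inequalities $\dep N \le \mdim N \le \dim N$ together with Proposition~\ref{depth-mdim} and Lemma~\ref{counmixed-equiv}. Your version merely makes the sandwich argument and the $N=0$ case slightly more explicit.
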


\begin{proof}
	A module $N$ is Cohen--Macaulay if $\mathrm{depth}\ N = \dim N$. By Lemma \ref{counmixed-equiv}, $N$ is counmixed if and only if $\mathrm{indim}\ N = \dim N$. The proposition follows, as
	$N$ is Cohen--Macaulay if and only if $\mathrm{depth}\ N = \dim N$
	 if and only if $\mathrm{depth}\ N = \mathrm{indim}\ N$ and $\mathrm{indim}\ N = \dim N$ if and only if $N$ is initially Cohen--Macaulay and counmixed.
\end{proof}

\begin{corollary}
	Every polynomial ring $R = \K[x_1,\dots,x_n]$ over a field $\K$ is initially Cohen--Macaulay.
\end{corollary}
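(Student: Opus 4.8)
The plan is to deduce this from the facts about polynomial rings already recorded together with Proposition \ref{ICM-unmixed}. Since $R=\K[x_1,\dots,x_n]$ is not local, the first thing I would do is read the initially Cohen--Macaulay condition in the standard graded setting, taking the distinguished maximal ideal to be the irrelevant ideal $M=(x_1,\dots,x_n)$; the invariants $\dep$, $\mdim$ and $\dim$ appearing in the definition all make literal sense in this graded-local situation, and the earlier results (in particular Lemma \ref{counmixed-equiv} and Proposition \ref{ICM-unmixed}) carry over verbatim.

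First I would note that $R$ is an integral domain of Krull dimension $n$. By Corollary \ref{domain-counmixed} (and the corollary immediately following it) $R$ is counmixed and $\mdim R=\dim R=n$; in particular every associated prime of $R$, namely $(0)$, has coheight $n$.

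Next I would compute $\dep_M R$ directly: the variables $x_1,\dots,x_n$ form an $R$-regular sequence lying in $M$, and the quotient $R/(x_1,\dots,x_n)R\cong\K$ has depth $0$, so this regular sequence is maximal and $\dep R=n$. Combining this with the previous paragraph yields $\dep R=n=\mdim R$, which is exactly the initially Cohen--Macaulay condition. Equivalently, $R$ is Cohen--Macaulay and counmixed, so Proposition \ref{ICM-unmixed} applies and gives the conclusion at once.

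The only point demanding any care is the transition from the local definition to the graded polynomial ring, i.e.\ fixing $M=(x_1,\dots,x_n)$ as the distinguished maximal ideal; once that is done there is no real obstacle, since both quantities in question simply equal $n$.
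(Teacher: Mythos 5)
Your proposal is correct and follows essentially the same route as the paper, which leaves the corollary as an immediate consequence of Proposition \ref{ICM-unmixed} together with the earlier observation that a polynomial ring is Cohen--Macaulay and counmixed with $\mdim R=n$. Your extra care about working in the graded-local setting at the irrelevant maximal ideal, and the explicit depth computation via the regular sequence $x_1,\dots,x_n$, are fine but not a different argument.
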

\qed

\begin{proposition}\label{pdim-bight}
	Let $R$ be a Cohen--Macaulay local ring and $N$ a finitely generated $R$-module. Then
	$$\mathrm{pdim}\ N \ge\mathrm{bight}\ N.$$
	Equality holds if and only if $N$ is initially Cohen--Macaulay.
\end{proposition}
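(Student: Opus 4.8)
The plan is to identify $\pdim N$ via the Auslander--Buchsbaum formula and to translate $\bight N$ into $\mdim N$ using the equidimensionality of Cohen--Macaulay local rings, after which the asserted inequality becomes exactly Proposition~\ref{depth-mdim}. First I would dispose of the case $\pdim N=\infty$: here $\pdim N\ge\bight N$ holds trivially because $\bight N\le\dim R<\infty$. So the content lies in the case $\pdim N<\infty$, which I treat next (the case $N=0$ being vacuous under the convention that the zero module is initially Cohen--Macaulay).

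Assume $\pdim N<\infty$. The Auslander--Buchsbaum formula gives $\pdim N+\dep N=\dep R$, and since $R$ is Cohen--Macaulay local, $\dep R=\dim R$, so $\pdim N=\dim R-\dep N$. The second ingredient is the standard fact that in a Cohen--Macaulay local ring $\hte P+\coht P=\dim R$ for every prime $P$ (see, e.g., \cite{BrunsHerzog1998}). Applying this to the associated primes of $N$, and using that subtracting from the constant $\dim R$ exchanges supremum with infimum, I get $\dim R-\bight N=\inf\{\dim R-\hte P\mid P\in\ass_RN\}=\inf\{\coht P\mid P\in\ass_RN\}=\mdim N$. Combining, $\pdim N-\bight N=(\dim R-\dep N)-(\dim R-\mdim N)=\mdim N-\dep N$, which is $\ge 0$ by Proposition~\ref{depth-mdim}; hence $\pdim N\ge\bight N$, with equality precisely when $\dep N=\mdim N$, that is, when $N$ is initially Cohen--Macaulay.

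The main point requiring care is the role of finite projective dimension: the Auslander--Buchsbaum identity, and therefore the clean equivalence governing the equality case, is only available when $\pdim N<\infty$ (automatic, for instance, when $R$ is regular, which is the situation in the polynomial-ring applications of later sections); when $\pdim N=\infty$ the inequality still holds but is strict. Apart from this bookkeeping, the proof is a direct substitution of two classical identities into Proposition~\ref{depth-mdim}.
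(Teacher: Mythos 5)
Your argument follows the same route as the paper's: Auslander--Buchsbaum gives $\pdim N+\dep N=\dim R$, the identity $\hte P+\coht P=\dim R$ for primes of a Cohen--Macaulay local ring converts $\bight N$ into $\dim R-\mdim N$, and Proposition~\ref{depth-mdim} then yields both the inequality and the equality criterion. Your caveat about finite projective dimension is not mere bookkeeping, though: it exposes a real gap in the statement and in the paper's proof, which invokes Auslander--Buchsbaum unconditionally. If $R$ is Cohen--Macaulay but not regular, the residue field $R/M$ satisfies $\ass_R(R/M)=\{M\}$, hence $\dep(R/M)=\mdim(R/M)=0$, so $R/M$ is initially Cohen--Macaulay; yet $\pdim(R/M)=\infty>\dim R=\hte M=\bight(R/M)$, so the ``initially Cohen--Macaulay implies equality'' direction fails. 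The proposition is correct as stated only under the additional hypothesis $\pdim N<\infty$ (e.g.\ $R$ regular), which, as you observe, holds in all of the paper's later applications to polynomial rings, so nothing downstream is affected.
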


\begin{proof}
	The Auslander--Buchsbaum formula \cite[Theorem 3.1]{AuslanderBuchsbaum1957} states $\mathrm{pdim}\ N+\mathrm{depth}\ N=\mathrm{depth}\ R$. Since $R$ is Cohen--Macaulay, $\mathrm{depth}\ R=\dim R$. As a Cohen--Macaulay ring, $R$ is also counmixed, so Corollary \ref{ht-bight} gives $\dim R = \mathrm{bight}\ N+\mathrm{indim}\ N$. Equating these yields $\mathrm{pdim}\ N+\mathrm{depth}\ N = \mathrm{bight}\ N+\mathrm{indim}\ N$.
	From Proposition \ref{depth-mdim}, we know $\mathrm{depth}\ N \le \mathrm{indim}\ N$. This implies $\mathrm{pdim}\ N \ge \mathrm{bight}\ N$. Equality holds in this new inequality if and only if $\mathrm{depth}\ N = \mathrm{indim}\ N$, which is the definition of $N$ being initially Cohen--Macaulay.
\end{proof}

\begin{corollary}\label{pdim-hte}
	Let $R$ be a Cohen--Macaulay local ring and $N$ a finitely generated $R$-module. Then $N$ is Cohen--Macaulay if and only if $\mathrm{pdim}\ N=\mathrm{ht}\ N$.
\end{corollary}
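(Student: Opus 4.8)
The plan is to deduce this as a counmixed refinement of Proposition \ref{pdim-bight}, combined with the characterization of Cohen--Macaulayness in Proposition \ref{ICM-unmixed}. The one genuinely new ingredient is a dictionary between the invariants $\mathrm{ht}\ N$ and $\mathrm{bight}\ N$, which comes for free here: a Cohen--Macaulay local ring is counmixed, so Corollary \ref{ht-bight} applies and gives $\mathrm{ht}\ N+\dim N=\dim R$ together with $\mathrm{bight}\ N+\mathrm{indim}\ N=\dim R$; moreover, by Lemma \ref{counmixed-ring}, $\mathrm{coht}\ P=\dim R-\mathrm{ht}\ P$ for every prime $P$ of $R$, so ``equal heights'' and ``equal coheights'' are interchangeable for the associated primes of $N$.

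For the forward implication I would start from $N$ Cohen--Macaulay, invoke Proposition \ref{ICM-unmixed} to get that $N$ is initially Cohen--Macaulay and counmixed, then apply Proposition \ref{pdim-bight} to obtain $\mathrm{pdim}\ N=\mathrm{bight}\ N$. Counmixedness gives $\mathrm{indim}\ N=\dim N$ by Lemma \ref{counmixed-equiv}, and feeding this into the two displayed identities above yields $\mathrm{bight}\ N=\dim R-\mathrm{indim}\ N=\dim R-\dim N=\mathrm{ht}\ N$; hence $\mathrm{pdim}\ N=\mathrm{ht}\ N$.

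For the converse I would assume $\mathrm{pdim}\ N=\mathrm{ht}\ N$ and first observe the sandwich $\mathrm{ht}\ N\le\mathrm{bight}\ N\le\mathrm{pdim}\ N$, the left inequality because $\mathrm{ht}\ N$ and $\mathrm{bight}\ N$ are the infimum and supremum of the same set of prime heights, the right one from Proposition \ref{pdim-bight}. The hypothesis collapses this to $\mathrm{ht}\ N=\mathrm{bight}\ N=\mathrm{pdim}\ N$. Now $\mathrm{pdim}\ N=\mathrm{bight}\ N$ makes $N$ initially Cohen--Macaulay by Proposition \ref{pdim-bight}, while $\mathrm{ht}\ N=\mathrm{bight}\ N$ forces all associated primes of $N$ to have a common height, hence (via $\mathrm{coht}\ P=\dim R-\mathrm{ht}\ P$) a common coheight, i.e. $N$ is counmixed; Proposition \ref{ICM-unmixed} then delivers that $N$ is Cohen--Macaulay.

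I do not expect a serious obstacle, since all the heavy lifting (Auslander--Buchsbaum, the depth/coheight bound) is already packaged into Propositions \ref{depth-mdim} and \ref{pdim-bight}. The only step needing care is the one place the Cohen--Macaulay hypothesis on $R$ is really used --- the translation between heights and coheights of the associated primes of $N$ via Lemma \ref{counmixed-ring} --- without which neither direction goes through.
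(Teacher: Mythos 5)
Your proposal is correct and follows essentially the same route as the paper, which proves this corollary in one line by combining Proposition \ref{ICM-unmixed}, Proposition \ref{pdim-bight}, and Corollary \ref{ht-bight}; your write-up simply makes explicit the height/coheight dictionary (valid since a Cohen--Macaulay local ring is counmixed) and the sandwich $\mathrm{ht}\ N\le\mathrm{bight}\ N\le\mathrm{pdim}\ N$ that the paper leaves implicit.
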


\begin{proof}
	This follows by combining Proposition \ref{ICM-unmixed} (noting $N$ is Cohen--Macaulay if and only if $\mathrm{bight}\ N = \mathrm{ht}\ N$ and $N$ is initially Cohen--Macaulay), Proposition \ref{pdim-bight}, and Corollary \ref{ht-bight}.
\end{proof}

We now analyze how the initial dimension and the initially Cohen--Macaulay property behave with respect to regular sequences. The following lemma is standard:

\begin{lemma}\label{8}
	Let $(R,M)$ be a local ring, $N$ a finitely generated $R$-module, and $a\in M$ an $N$-regular element. If $P\in\mathrm{Ass}_RN$, any prime $P'$ minimal over $(P,a)$ belongs to $\mathrm{Ass}_RN/aN$. In particular,
	$$\mathrm{indim}\ N/aN\le\mathrm{indim}\ N-1\quad\text{and}\quad\dim N/aN=\dim N-1.$$
\end{lemma}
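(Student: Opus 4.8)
The plan is to establish the claim in two stages: first the assertion about associated primes, then the two inequalities/equalities about $\mathrm{indim}$ and $\dim$. For the first part, let $P \in \mathrm{Ass}_R N$, say $P = \ann_R(n)$ for some $n \in N$, and let $P'$ be a prime minimal over $(P,a)$. The natural candidate is to work inside $N/aN$ and show $P' \in \mathrm{Ass}_R(N/aN)$. Since $a$ is $N$-regular and $a \in M$, the element $a$ lies in no minimal prime of $\ann_R N$ (indeed in no associated prime of $N$), so $P'$ strictly contains $P$ and $\hte_{R/P} P' = 1$ by Krull's principal ideal theorem. After localizing at $P'$ we may assume $P'$ is the maximal ideal; then $a$ is a system of parameters on $R/P$ after localization, $\dim (R/P)_{P'} = 1$, and $a$ is $(R/P)_{P'}$-regular since $R/P$ is a domain and $a \ne 0$ there. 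Now $R/P \hookrightarrow N$ (via multiplication into $Rn$), and because $a$ is both $N$-regular and $(R/P)$-regular, tensoring $0 \to R/P \to N \to N/Rn \to 0$ with $R/aR$ gives a left-exact sequence $0 \to (R/P)/a(R/P) \to N/aN$, exhibiting a copy of $R/(P,a)$ inside $N/aN$. Since $P'$ is associated to $R/(P,a)$ (as $(R/(P,a))_{P'}$ is a nonzero Artinian local ring, or by a prime-avoidance/associated-prime argument on the one-dimensional ring $R/P$), it follows that $P' \in \mathrm{Ass}_R(N/aN)$.

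For the inequality $\mathrm{indim}\ N/aN \le \mathrm{indim}\ N - 1$: choose $P \in \mathrm{Ass}_R N$ achieving $\coht P = \mathrm{indim}\ N$ (the infimum is attained since $\mathrm{Ass}_R N$ is finite). Pick $P'$ minimal over $(P,a)$; by the first part $P' \in \mathrm{Ass}_R(N/aN)$, and since $P \subsetneq P'$ with $\coht$ dropping by exactly one along a saturated chain in $R/P$ — more carefully, $\coht P' = \dim R/P' \le \dim R/P - 1 = \coht P - 1$ because $R/P$ is a domain in which $a$ is a nonzerodivisor lying in $P'/P$, so $\hte (P'/P) \ge 1$ — we get $\mathrm{indim}\ N/aN \le \coht P' \le \mathrm{indim}\ N - 1$. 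The equality $\dim N/aN = \dim N - 1$ is the standard fact that a regular element drops dimension by one: $\dim N/aN \le \dim N - 1$ holds because $a$ avoids all minimal primes of $\ann_R N$ (being $N$-regular, hence avoiding all associated primes), and $\dim N/aN \ge \dim N - 1$ is Krull's principal ideal theorem applied to $R/\ann_R N$; one may cite \cite[Theorem 2.1.2]{BrunsHerzog1998} or the surrounding discussion in \cite{Matsumura1989}.

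The main obstacle I expect is the bookkeeping in the first part: one must be careful that $a$ remains regular after passing to $R/P$ and that the inclusion $R/(P,a) \hookrightarrow N/aN$ is genuinely injective, which requires knowing $\mathrm{Tor}_1^R(N/Rn, R/aR) = 0$ or, equivalently, that $a$ is regular on $N/Rn$ as well — this last point is not automatic, so the cleaner route is to observe directly that $a$ is regular on the submodule $Rn \cong R/P$ and on $N$, and to use the snake lemma on multiplication by $a$ applied to $0 \to Rn \to N \to N/Rn \to 0$, which yields $0 \to Rn/aRn \to N/aN$ without any hypothesis on $N/Rn$. Once that injection is in hand, the associated-prime statement for the one-dimensional cyclic module $R/(P,a)$ is elementary, and the rest is formal.
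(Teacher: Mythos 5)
The paper states Lemma \ref{8} without proof (it is cited as ``standard''), so I am judging your argument against the standard one. Your reduction of the two displayed (in)equalities to the associated-primes assertion is fine, as is the sketch of $\dim N/aN=\dim N-1$. The gap is in the main assertion: the map $Rn/aRn\to N/aN$ need not be injective, and the snake lemma does \emph{not} deliver this ``without any hypothesis on $N/Rn$''. Applied to $0\to Rn\to N\to N/Rn\to 0$ with vertical maps given by multiplication by $a$, the snake lemma gives $0\to\ker(a|_{N/Rn})\to Rn/aRn\to N/aN$ (the connecting map is injective because $\ker(a|_N)=0$), so the kernel of $Rn/aRn\to N/aN$ is exactly $\ker(a\colon N/Rn\to N/Rn)\cong\mathrm{Tor}_1^R(N/Rn,R/aR)$ --- precisely the obstruction you named and then claimed to circumvent. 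It is genuinely nonzero in examples: take $R=\K[[x,y]]$, $N=R/(x)$, $P=(x)=\ann_R(\bar y)$, $n=\bar y$, $a=y$. Then $a$ is $N$-regular and $Rn/aRn\cong R/(x,y)\ne 0$, but $aN=(x,y)/(x)=Rn$, so the image of $Rn$ in $N/aN$ is zero. No copy of $R/(P,a)$ is produced inside $N/aN$, and the argument collapses at its key step.

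The repair is to finish the localization you started and drop the submodule construction. Localize at $P'$: then $PR_{P'}\in\ass_{R_{P'}}N_{P'}$ and $\coht PR_{P'}=\hte(P'/P)=1$ in $R/P$ (it is $\le 1$ by Krull since $P'/P$ is minimal over $(a)$ in the domain $R/P$, and $\ge 1$ since $a\in P'\setminus P$, as $a$ is a nonzerodivisor on $N$ while $P$ consists of zerodivisors). The bound $\dep N_{P'}\le\coht Q$ for $Q\in\ass N_{P'}$ --- the same \cite[Theorem 17.2]{Matsumura1989} invoked in Proposition \ref{depth-mdim} --- gives $\dep N_{P'}\le 1$, while $a/1\in P'R_{P'}$ being $N_{P'}$-regular gives $\dep N_{P'}\ge 1$. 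Hence $\dep\bigl((N/aN)_{P'}\bigr)=0$, i.e.\ $P'R_{P'}\in\ass\bigl((N/aN)_{P'}\bigr)$, i.e.\ $P'\in\ass_R(N/aN)$. With this in place, your derivation of $\mdim N/aN\le\mdim N-1$ (choose $P$ attaining the infimum and note $\coht P'\le\coht P-1$) goes through unchanged.
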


This result can be extended inductively to regular sequences of arbitrary length.

\begin{corollary}\label{regular indim}
	Let $(R,M)$ be a local ring and $N$ a finitely generated $R$-module. If $a_1,\dots,a_r\in M$ form an $N$-regular sequence, then
	$$\mathrm{indim}\ N/(a_1,\dots,a_r)N\le\mathrm{indim}\ N-r\quad\text{and}\quad\dim N/(a_1,\dots,a_r)N=\dim N-r.$$
\end{corollary}
\qed

\begin{proposition}\label{regular element}
	Let $(R,M)$ be a local ring, $N$ a finitely generated $R$-module, and $a=a_1,\dots,a_r\in M$ an $N$-regular sequence. Then $N$ is initially Cohen--Macaulay if and only if $N/aN$ is initially Cohen--Macaulay and $\mathrm{indim}\ N/aN=\mathrm{indim}\ N-r$.
\end{proposition}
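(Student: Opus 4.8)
The plan is to reduce the claim to bookkeeping with three facts already available: the exact depth drop $\dep N/aN=\dep N-r$ for an $N$-regular sequence $a_1,\dots,a_r\in M$ (a classical consequence of Rees' theorem; see e.g.\ \cite{BrunsHerzog1998} or \cite{Matsumura1989}), the inequality $\dep(-)\le\mdim(-)$ of Proposition \ref{depth-mdim}, and the inequality $\mdim N/aN\le\mdim N-r$ of Corollary \ref{regular indim}. First I would dispose of the degenerate cases: if $N=0$, or if $r=0$ so that $N/aN=N$, the asserted equivalence is immediate; and if $N\ne0$ and $r\ge1$, then $(a_1,\dots,a_r)N\ne N$ by Nakayama's lemma, so $N/aN\ne0$ as well, and $\dep N\ge r$.

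For the forward implication, suppose $N$ is initially Cohen--Macaulay, i.e.\ $\dep N=\mdim N$. Chaining the depth-drop formula, Proposition \ref{depth-mdim}, and Corollary \ref{regular indim} gives
$$\mdim N-r=\dep N-r=\dep N/aN\le\mdim N/aN\le\mdim N-r,$$
which forces $\dep N/aN=\mdim N/aN=\mdim N-r$. In particular $N/aN$ is initially Cohen--Macaulay and $\mdim N/aN=\mdim N-r$. Conversely, assume $N/aN$ is initially Cohen--Macaulay and $\mdim N/aN=\mdim N-r$. Then the depth-drop formula gives $\dep N-r=\dep N/aN=\mdim N/aN=\mdim N-r$, hence $\dep N=\mdim N$ and $N$ is initially Cohen--Macaulay.

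I do not expect a genuine obstacle here, since the argument is a two-line manipulation of (in)equalities. The one point that must be handled with care is that the depth drop is \emph{exact}, equal to $r$ and not merely $\le r$; this is the only external input beyond the results of this section, and it is precisely what forces the inequality of Corollary \ref{regular indim} to become an equality under the initially Cohen--Macaulay hypothesis. If one prefers to stay entirely inside the paper, the whole argument can instead be run by induction on $r$, using Lemma \ref{8} together with $\dep N/aN=\dep N-1$ for a single regular element at each step; but invoking Corollary \ref{regular indim} directly makes that single-element reduction unnecessary.
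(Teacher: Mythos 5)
Your argument is correct and is essentially identical to the paper's proof: both directions rest on the exact depth drop $\dep N/aN=\dep N-r$, Corollary \ref{regular indim}, and Proposition \ref{depth-mdim}, with the forward implication obtained by squeezing the same chain of inequalities. The extra attention to the degenerate cases $N=0$, $r=0$ and the Nakayama observation that $N/aN\ne0$ is harmless but not needed beyond what the paper already assumes.
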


\begin{proof}
	Assume $N$ is initially Cohen--Macaulay. Then $\mathrm{depth}\ N/aN = \mathrm{depth}\ N - r = \mathrm{indim}\ N - r$.
	From Corollary \ref{regular indim}, we have $\mathrm{indim}\ N/aN \le \mathrm{indim}\ N - r$.
	Combining these, $\mathrm{depth}\ N/aN \ge \mathrm{indim}\ N/aN$. Proposition \ref{depth-mdim} provides the reverse inequality, so we must have $\mathrm{depth}\ N/aN = \mathrm{indim}\ N/aN$. This shows $N/aN$ is initially Cohen--Macaulay and forces $\mathrm{indim}\ N/aN = \mathrm{indim}\ N - r$.
	
	Conversely, assume $N/aN$ is initially Cohen--Macaulay and $\mathrm{indim}\ N/aN = \mathrm{indim}\ N - r$. Then $\mathrm{depth}\ N - r = \mathrm{depth}\ N/aN = \mathrm{indim}\ N/aN = \mathrm{indim}\ N - r$. This implies $\mathrm{depth}\ N = \mathrm{indim}\ N$, so $N$ is initially Cohen--Macaulay.
\end{proof}

The initially Cohen--Macaulay property is also well-behaved under localization.

\begin{proposition}\label{localization initial dimension}
	Let $(R,M)$ be a local ring and $N$ a finitely generated initially Cohen--Macaulay $R$-module. For any prime ideal $P$ of $R$, the localization $N_P$ is an initially Cohen--Macaulay $R_P$-module. Moreover, if $N_P\ne0$, then $\mathrm{depth}_PN=\mathrm{depth}\ N_P = \mathrm{indim}\ N_P$.
\end{proposition}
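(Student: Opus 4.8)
The plan is to route the problem through the dimension filtration, where the initially Cohen--Macaulay property is detected by a single module (Proposition \ref{ICM-filtration}), and then to invoke the classical fact that Cohen--Macaulayness is preserved under localization. First I would dispose of the trivial cases: if $N=0$ or $N_P=0$ there is nothing to prove, and if $P\in\mathrm{Ass}_R N$ then $PR_P\in\mathrm{Ass}_{R_P}N_P$, so $\mathrm{indim}\ N_P=0$ and $\mathrm{depth}_P N=\mathrm{depth}\ N_P=0$ (as $P$ consists of zerodivisors on $N$), whence $N_P$ is initially Cohen--Macaulay by Corollary \ref{maximal-ICM}; so we may assume $N_P\ne 0$ and $P\notin\mathrm{Ass}_R N$. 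Applied over the local ring $R_P$ to $N_P$, Proposition \ref{depth-mdim} gives $\mathrm{depth}\ N_P\le\mathrm{indim}\ N_P$, and $\mathrm{depth}_P N\le\mathrm{depth}\ N_P$ holds for any finitely generated module, since a maximal $N$-regular sequence in $P$ localizes to an $N_P$-regular sequence in $PR_P$. Thus the statement reduces to the reverse inequalities $\mathrm{depth}\ N_P\ge\mathrm{indim}\ N_P$ --- i.e. that $N_P$ is initially Cohen--Macaulay --- and $\mathrm{depth}_P N\ge\mathrm{depth}\ N_P$.

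Set $b=\mathrm{indim}\ N$. Since $N$ is initially Cohen--Macaulay, Proposition \ref{ICM-filtration} tells us the bottom term $N_b$ of the dimension filtration, i.e. the largest submodule of $N$ of dimension $\le b$, is Cohen--Macaulay; its associated primes are exactly those $Q\in\mathrm{Ass}_R N$ with $\mathrm{coht}\ Q=b$, so $N_b$ is counmixed of dimension $b$. The main step is then to identify the bottom term of the dimension filtration of $N_P$. Fixing a minimal primary decomposition $0=\bigcap_{Q\in\mathrm{Ass}_R N}\mathfrak{q}_Q$ of the zero submodule of $N$, with $\mathfrak{q}_Q$ being $Q$-primary, one checks that this bottom term $(N_P)_c$, where $c=\mathrm{indim}\ N_P$, equals $L_P$, where $L\subseteq N$ is the intersection of those $\mathfrak{q}_Q$ with $Q\subseteq P$ and $\dim R_P/QR_P>c$. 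Since $L_P$ is counmixed over $R_P$ by construction, it would then suffice, by Propositions \ref{ICM-unmixed} and \ref{ICM-filtration} applied over $R_P$, to show that $L_P$ is Cohen--Macaulay; the depth equality $\mathrm{depth}_P N=\mathrm{depth}\ N_P$ would then be extracted from a maximal $N$-regular sequence in $P$ together with Proposition \ref{regular element}.

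The Cohen--Macaulayness of $L_P$ is the real content, and the main obstacle. When $(N_b)_P\ne 0$ it is painless: $(N_b)_P$ is a nonzero localization of the Cohen--Macaulay module $N_b$, hence Cohen--Macaulay, and one verifies directly that $(N_b)_P=(N_P)_c$, so $N_P$ is even Cohen--Macaulay in this case. The genuine difficulty is the case $(N_b)_P=0$, i.e. when no associated prime of $N$ of minimal coheight survives in $R_P$: then $(N_P)_c=L_P$ is assembled out of primary components of $N$ sitting strictly above the bottom of $N$'s dimension filtration, about which the initially Cohen--Macaulay hypothesis on $N$ gives no information whatsoever. Forcing $L_P$ to be Cohen--Macaulay here appears to require that the higher quotients $N_i/N_{i-1}$ of the dimension filtration of $N$ also be Cohen--Macaulay --- that is, that $N$ be sequentially Cohen--Macaulay --- so the crux of the argument will be to show either that enough of this extra structure survives the passage to $R_P$, or that one can reduce to the sequentially Cohen--Macaulay situation (for instance by quotienting $N$ by a maximal $N$-regular sequence inside $P$ and re-applying Proposition \ref{regular element}). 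Since the same phenomenon governs whether $\mathrm{depth}_P N=\mathrm{depth}\ N_P$ holds, I expect this $(N_b)_P=0$ case to be precisely where the argument must be handled with care.
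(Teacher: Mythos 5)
Your proposal is not a complete proof, and you say so yourself: the case $(N_b)_P=0$ is left open, with only speculation about what extra structure might rescue it. That is the gap. But your diagnosis of where the difficulty sits is exactly right, and in fact sharper than you suspect: in that case the statement is simply false, so no argument can close the gap. Take $R=\K[x,y,z,w,u]_{(x,y,z,w,u)}$ with maximal ideal $M$, let $J=(x,y)\cap(z,w)$, and set $N=R/(J\cap M^3)$. Then $\mathrm{Ass}_RN=\{(x,y),(z,w),M\}$, so $\mathrm{indim}\ N=0=\mathrm{depth}\ N$ and $N$ is initially Cohen--Macaulay, with $N_b=N_0$ equal to the $M$-primary part. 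Localizing at $P=(x,y,z,w)$ kills the component $M^3$ (since $u\notin P$), so $N_P=R_P/\bigl((x,y)\cap(z,w)\bigr)R_P$ is the union of two planes meeting at the closed point of the four-dimensional regular local ring $R_P$: it is counmixed with $\mathrm{indim}\ N_P=2$ but $\mathrm{depth}\ N_P=1$, hence not initially Cohen--Macaulay; moreover $\mathrm{depth}_PN=0\ne 1=\mathrm{depth}\ N_P$ because $P\subset M\in\mathrm{Ass}_RN$. This is precisely your scenario $(N_b)_P=0$, where the initially Cohen--Macaulay hypothesis controls only a primary component that does not survive localization.

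For comparison, the paper's own proof takes a different route---induction on $t=\mathrm{depth}_PN$, cutting down by an $N$-regular element $a\in P$ and invoking Proposition \ref{regular element}---but it founders on the same rock. Its base case asserts that $t=0$ forces $\mathrm{depth}\ N_P=0$; this conflates $\mathrm{depth}_PN$ with $\mathrm{depth}\ N_P$ and fails exactly when $P$ is contained in an associated prime of $N$ that is not itself associated to $N_P$ (in the example above, $\mathrm{depth}_PN=0$ while $\mathrm{depth}\ N_P=1$). The inductive step likewise uses $\mathrm{indim}(N_P/aN_P)=\mathrm{indim}\ N_P-1$, whereas Corollary \ref{regular indim} only gives the inequality $\le$. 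So you should not try to repair your argument toward the stated conclusion; the honest outcome is that the result needs a stronger hypothesis---for instance that $N$ is sequentially Cohen--Macaulay, or that $P$ contains an associated prime of minimal coheight, which is your case $(N_b)_P\ne0$ and the one place your filtration argument has a real chance of working (modulo the slip at the end of that paragraph: it yields that $N_P$ is initially Cohen--Macaulay, not Cohen--Macaulay).
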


\begin{proof}
	Assume $N_P\ne0$. The inequalities $\mathrm{depth}_PN\le\mathrm{depth}\ N_{P}\le\mathrm{indim}\ N_P$ are standard. We prove $\mathrm{depth}_PN=\mathrm{indim}\ N_P$ by induction on $t = \mathrm{depth}_PN$.
	
	If $t=0$, then $\mathrm{depth}\ N_P = 0$. This means the maximal ideal $PR_P$ of $R_P$ is an associated prime of $N_P$. By Corollary \ref{maximal-ICM}, this implies $\mathrm{indim}\ N_P = 0$. Thus, $\mathrm{depth}_PN = \mathrm{indim}\ N_P = 0$.
	
	Now assume $t > 0$ and the claim holds for all initially Cohen--Macaulay modules with $P$-depth less than $t$. Choose an $N$-regular element $a\in P$ and set $N'=N/aN$. By Proposition \ref{regular element}, $N'$ is also initially Cohen--Macaulay and $\mathrm{indim}\ N' = \mathrm{indim}\ N - 1$. Since localization preserves exactness, the element $a$ remains $N_P$-regular in $R_P$. By localizing at $P$, we obtain
	$\mathrm{indim}\ (N_P/aN_P)=\mathrm{indim}\ N'_P=\mathrm{indim}\ N_P - 1$. We have $\mathrm{depth}_PN' = \mathrm{depth}_PN - 1 = t-1$. By the induction hypothesis applied to $N'$, we get $N'_P$ is initially Cohen--Macaulay and $\mathrm{depth}_PN'= \mathrm{indim}\ N'_P$. We can now chain the equalities:
	$$\mathrm{depth}_PN - 1 = \mathrm{depth}_PN' = \mathrm{indim}\ N'_P = \mathrm{indim}\ (N_P/aN_P)=\mathrm{indim}\ N_P - 1.$$
	Therefore, $\mathrm{depth}_P N = \mathrm{indim}\ N_P$, and $N_P$ is initially Cohen--Macaulay.
\end{proof}

We now connect the initially Cohen--Macaulay property to Schenzel's dimension filtration \cite{Schenzel1999}.

\begin{lemma}\label{filtration-indim}
	Let $(R,M)$ be a local ring and $N$ a finitely generated $R$-module. Let $N_i$ be the largest submodule of $N$ with $\dim N_i\le i$, and let $b=\mathrm{indim}\ N$, $d=\dim N$. The dimension filtration of $N$ starts at index $b$:
	$$0=N_{b-1}\subset N_b\subset N_{b+1}\subset\dots\subset N_d=N.$$
	Furthermore, $\mathrm{indim}\ N=\dim N_b=b$.
\end{lemma}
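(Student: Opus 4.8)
The plan is to reduce the statement to the standard description of the dimension filtration via associated primes, namely
$$\ass_R N_i=\{P\in\ass_R N\mid \coht P\le i\}.$$
First I would check that $N_i$ is well defined as a submodule: if $x,y\in N$ have $\dim Rx\le i$ and $\dim Ry\le i$, then $R(x+y)$ is a subquotient of $Rx\oplus Ry$ and each $Rrx$ ($r\in R$) is a submodule of $Rx$, so all of them have support contained in $\supp Rx\cup\supp Ry$, which has dimension $\le i$; hence $\{x\in N\mid\dim Rx\le i\}$ is a submodule, and it is evidently the largest one of dimension $\le i$. For the associated-prime identity: if $P\in\ass_R N_i$ then $P=\ann_R(x)$ for some nonzero $x\in N_i$, so $R/P\cong Rx\subseteq N_i$ and $\coht P=\dim R/P\le\dim N_i\le i$, while $P\in\ass_R N$ since $N_i\subseteq N$; conversely, if $P\in\ass_R N$ with $\coht P\le i$, choose $x\in N$ with $\ann_R(x)=P$, so $Rx\cong R/P$ has dimension $\coht P\le i$, whence $x\in N_i$ and $P\in\ass_R N_i$.

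With this identity in hand, the rest is bookkeeping. Since $N_i=0$ if and only if $\ass_R N_i=\emptyset$, and the latter holds exactly when every $P\in\ass_R N$ has $\coht P>i$, i.e. when $i<\inf\{\coht P\mid P\in\ass_R N\}=\mdim N=b$, we conclude $N_j=0$ for all $j\le b-1$, in particular $N_{b-1}=0$, and $N_b\ne0$. The containments $N_b\subseteq N_{b+1}\subseteq\dots\subseteq N_d$ are immediate from maximality, since a submodule of dimension $\le i$ is in particular one of dimension $\le i+1$; and $N_d=N$ because $\dim N=d$ exhibits $N$ itself as the largest submodule of dimension $\le d$.

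Finally I would compute $\dim N_b$. Using that $\dim M=\sup\{\coht P\mid P\in\ass_R M\}$ for a finitely generated module $M$ (the minimal primes over $\ann_R M$ belong to $\ass_R M$ and realize this supremum), the identity above gives
$$\dim N_b=\sup\{\coht P\mid P\in\ass_R N,\ \coht P\le b\}.$$
By definition of the infimum, every $P\in\ass_R N$ has $\coht P\ge\mdim N=b$, so the constraint $\coht P\le b$ forces $\coht P=b$; hence $\dim N_b=b$, and combined with $\mdim N=b$ this is the asserted $\mdim N=\dim N_b=b$ (indeed $N_b$ is then counmixed, all its associated primes having coheight $b$). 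I do not expect a genuine obstacle: the only points requiring care are the associated-prime description of $N_i$ and the fact that the dimension of a finitely generated module is the largest coheight among its associated primes; the remaining steps are purely formal.
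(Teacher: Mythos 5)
Your proof is correct and follows essentially the same route as the paper: both arguments rest on the associated-prime description of the dimension filtration, which the paper cites from Schenzel (in the quotient form $\ass(N_i/N_{i-1})=\{P\in\ass N\mid\dim R/P=i\}$) and which you reprove in the submodule form $\ass N_i=\{P\in\ass N\mid\coht P\le i\}$. Your version is merely more self-contained, and your computation of $\dim N_b$ via the largest coheight of an associated prime matches the paper's conclusion $\dim N_b=b$.
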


\begin{proof}
	Schenzel's filtration \cite{Schenzel1999} ensures that $\mathrm{Ass}(N_i/N_{i-1}) = \{P\in\mathrm{Ass}\ N \mid \dim R/P = i\}$. Since $b = \mathrm{indim}\ N$ is the minimum dimension of $R/P$ for $P \in \mathrm{Ass}\ N$, the set $\mathrm{Ass}(N_i/N_{i-1})$ is empty for all $i < b$. This means $N_i = 0$ for $i < b$, so $N_b$ is the first non-zero module in the filtration. It follows that $\dim N_b = \dim (N_b / N_{b-1}) = b = \mathrm{indim}\ N$.
\end{proof}

The following result shows that the depth of $N$ is controlled entirely by the first non-zero piece of its dimension filtration.

\begin{proposition}\label{dep N=dep Nb}
	Let $(R,M)$ be a local ring and $N$ a nonzero finitely generated $R$-module with dimension filtration
	$$0\subset N_b\subset N_{b+1}\subset\cdots\subset N_d=N,$$
	where $b=\mdim N$ and $d=\dim N$. Then $\dep N=\dep N_i$ for each $b\le i\le d$.
\end{proposition}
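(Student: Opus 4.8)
\textit{Proof proposal.}

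The plan is to prove $\dep N_b=\dep N_{b+1}=\dots=\dep N_d$ one link at a time, by induction on $i$, and then read off $\dep N=\dep N_d=\dep N_b$, so that indeed $\dep N=\dep N_i$ for every $b\le i\le d$. Throughout I would use the defining property of Schenzel's filtration already invoked in Lemma \ref{filtration-indim}, namely $\ass(N_i/N_{i-1})=\{P\in\ass N:\coht P=i\}$. This says that each nonzero $N_i/N_{i-1}$ is counmixed of dimension exactly $i$, and that each $N_j$ is counmixed of dimension $j$ (nonzero only for $b\le j\le d$); Proposition \ref{depth-mdim} then gives the a priori bounds $\dep N_j\le j$ and $\dep(N_i/N_{i-1})\le i$.

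The base case $i=b$ is trivial. For the inductive step, fix $b<i\le d$ with $\dep N_{i-1}=\dep N_b$, and assume $N_i\ne N_{i-1}$. Feeding
$$0\longrightarrow N_{i-1}\longrightarrow N_i\longrightarrow N_i/N_{i-1}\longrightarrow 0$$
into the standard depth inequalities for a short exact sequence — concretely $\dep N_i\ge\min\{\dep N_{i-1},\dep(N_i/N_{i-1})\}$ and $\dep N_{i-1}\ge\min\{\dep N_i,\dep(N_i/N_{i-1})+1\}$ — one sees that $\dep N_i=\dep N_{i-1}$ as soon as $\dep(N_i/N_{i-1})>\dep N_{i-1}$. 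Since $\dep N_{i-1}\le i-1$, it is enough to establish
$$\dep(N_i/N_{i-1})\ \ge\ i.$$

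This last inequality is the crux, and the step I expect to be the main obstacle: because $N_i/N_{i-1}$ is counmixed of dimension $i$, Proposition \ref{depth-mdim} already gives $\dep(N_i/N_{i-1})\le i$, so what must be shown is precisely that $N_i/N_{i-1}$ is Cohen--Macaulay — which is not free in general and must be wrung out of the structure of the filtration rather than assumed. I would attack it via regular sequences and the maximality built into the $N_j$: if $a\in M$ is $N$-regular then $aN\cap N_j=aN_j$ (if $x=ay\in N_j$ then $Ry\cong Rx\subseteq N_j$, so $\dim Ry\le j$, hence $y\in N_j$ by maximality of $N_j$), so a maximal $N$-regular sequence $a_1,\dots,a_t$ remains regular on each $N_j$ and on each $N_i/N_{i-1}$, which already yields $\dep N_j\ge\dep N$; the content is the reverse inequality, i.e.\ forcing the depths of the quotients down to meet their dimensions. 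For this I would choose the $a_k$ generically — avoiding inductively the associated primes of $N$, of each $N_j$, and of all successive quotients in their dimension filtrations — so that each quotient step by $\underline a$ drops every relevant dimension by exactly one (Lemma \ref{8}, Corollary \ref{regular indim}) while preserving counmixedness of the pieces, mirroring the construction behind Proposition \ref{prop:ICM}. Making that genericity precise enough to deliver $\dep(N_i/N_{i-1})\ge i$ is where the real difficulty concentrates.
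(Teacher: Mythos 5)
The first half of your plan is sound and is essentially the paper's first step: the identity $aN\cap N_j=aN_j$ for an $N$-regular $a$ (your $Ry\cong Rx$ argument is correct) shows that every $N$-regular sequence restricts to an $N_j$-regular sequence, giving $\dep N\le\dep N_j$. The gap is in the reverse inequality. Your reduction via $0\to N_{i-1}\to N_i\to N_i/N_{i-1}\to 0$ requires $\dep(N_i/N_{i-1})\ge i$, i.e.\ that every factor of the dimension filtration is Cohen--Macaulay. But that is \emph{precisely} the definition of $N$ being sequentially Cohen--Macaulay, while the proposition is asserted for arbitrary finitely generated $N$; the inequality cannot be ``wrung out of the structure of the filtration'' because it is simply false in general. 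For instance, let $R$ be a regular local ring with a prime $P$ such that $R/P$ is a non--Cohen--Macaulay domain of dimension $2$ and depth $1$, let $Q$ be a prime of coheight $1$, and set $N=R/P\oplus R/Q$. Then $N_1=R/Q$, $N_2/N_1\cong R/P$ has depth $1<2$, and your depth-lemma step yields only $\dep N_2\ge1$ and $\dep N_1\ge\min\{\dep N_2,2\}$, which does not force $\dep N_2=\dep N_1$. No genericity in the choice of regular sequence can repair this: a regular element drops depth and dimension of $N_i/N_{i-1}$ by one each, so it can never close the gap between them.

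The paper obtains the reverse inequality by inducting on $b=\mdim N$ rather than on the index $i$. For $b=0$ one has $M\in\ass_RN_i$ for every $i$ (since $0\ne N_0\subseteq N_i$), so all depths are $0$. For $b\ge1$, prime avoidance gives an $N$-regular $a\in M$; by the first half $a$ is regular on every $N_i$, and the images $(N_i+aN)/aN\cong N_i/aN_i$ form part of the dimension filtration of $N/aN$, whose initial dimension is $b-1$ (this is the content of the chain \eqref{eq1} in the paper's proof, together with Lemma \ref{8}). The inductive hypothesis applied to $N/aN$ gives $\dep N/aN=\dep N_i/aN_i$, and since $a$ is regular on both $N$ and $N_i$ this lifts to $\dep N=\dep N_i$. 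The idea your proposal is missing is to cut the \emph{entire} filtration by a single well-chosen regular element and induct on the initial dimension, instead of comparing adjacent links through the filtration quotients.
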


\begin{proof}
	We first show by induction on $r$ that if $a_1,\dots,a_r$ is an $N$-regular sequence, then it is also an $N_i$-regular sequence for all $b\le i\le d$.
	
	For $r=1$, suppose $a_1$ is $N$-regular. Since $N_i\subseteq N$, multiplication by $a_1$ on each $N_i$ is injective, hence $a_1$ is $N_i$-regular. By the second isomorphism theorem, we have $N_i\cap a_1N=a_1N_i$, which yields
	$$\frac{N_i+a_1N}{a_1N}\cong\frac{N_i}{N_i\cap a_1N}=\frac{N_i}{a_1N_i}.$$
	Thus, up to isomorphism,
	$$\frac{N_i}{a_1N_i}\subseteq\frac{N_{i+1}}{a_1N_{i+1}}\quad\text{for }b\le i\le d-1.$$
	Furthermore, Lemma \ref{8} implies $\dim N_i/a_1N_i=\dim N_i-1 = i-1$ for each $b\le i\le d$. Hence the chain
	\begin{equation}\label{eq1}
		0\subset\frac{N_b+a_1N}{a_1N}\subset\frac{N_{b+1}+a_1N}{a_1N}\subset\cdots\subset\frac{N_d+a_1N}{a_1N}=\frac{N}{a_1N}
		\end{equation}
		is a part of the dimension filtration of $N/a_1N$, which starts at dimension $b-1$.
		
		For $r>1$, suppose the result holds for sequences of length $r-1$. Let $a_1,\dots,a_r$ be an $N$-regular sequence. Then $a_2,\dots,a_r$ is an $(N/a_1N)$-regular sequence. By the inductive hypothesis applied to the filtration \eqref{eq1}, $a_2,\dots,a_r$ is $(N_i/a_1N_i)$-regular for all $i \ge b$. Since $a_1$ is a nonzerodivisor on every $N_i$, it follows that $a_1,\dots,a_r$ is an $N_i$-regular sequence for all $i$. As $r$ can be any integer up to $\dep N$, this implies
		$$\dep N\le\dep N_i\quad\text{for all }b\le i\le d.$$
		We now prove $\dep N = \dep N_b$ by induction on $b=\mdim N$.
		
		If $b=0$, then $\dim N_b=0$, so $\dep N_b=0$. Since $\mdim N = b = 0$, $M\in\ass_RN$, which implies $\dep N=0$. Thus $\dep N=\dep N_b=0$. As $\dep N \le \dep N_i$, the equality $\dep N = \dep N_i = 0$ holds for all $i$, and the assertion holds.
		
		Now suppose $b\ge1$ and the statement holds for all modules with initial dimension less than $b$. Since $b \ge 1$, $\mdim N > 0$, which implies $M \notin \ass_R N$. Also, if $\dep N = 0$, then $M \in \ass_R N$, a contradiction. Thus $\dep N > 0$. By prime avoidance, there exists an element
		$$a\in M\setminus\bigcup_{P\in\ass_RN}P,$$
		which is $N$-regular. From the first part of our proof, $a$ is $N_i$-regular for all $i \ge b$, and the induced filtration of $N/aN$ has nonzero layer $(N_b+aN)/aN\cong N_b/aN_b$ which has dimension $b-1$ by \eqref{eq1}. Thus, $\mdim(N/aN) \le b-1$. By the induction hypothesis applied to $N/aN$, we obtain
		$$\dep N/aN=\dep N_i/aN_i\quad\text{for each }b\le i\le d.$$
		Since $a$ is regular on both $N$ and $N_b$, we have $\dep N/aN = \dep N - 1$ and $\dep N_b/aN_b = \dep N_b - 1$. Therefore,
		$$\dep N - 1 = \dep N_b - 1,$$
		which implies $\dep N=\dep N_b$. Combining with $\dep N \le \dep N_i$ for all $i \ge b$, we conclude $$\dep N=\dep N_i\quad\text{ for all }b\le i\le d.$$
\end{proof}

This immediately provides a characterization of initially Cohen--Macaulay modules using the dimension filtration.

\begin{proposition}\label{ICM-filtration}
	Let $(R,M)$ be a local ring and $N$ a finitely generated $R$-module. Then $N$ is initially Cohen--Macaulay if and only if the first non-zero module $N_b$ in its dimension filtration is Cohen--Macaulay (where $b = \mathrm{indim}\ N$).
\end{proposition}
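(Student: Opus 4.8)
The plan is to obtain the equivalence by simply chaining the two preceding results, since all of the substantive work has already been done in Proposition \ref{dep N=dep Nb} and Lemma \ref{filtration-indim}. First I would dispose of the degenerate case $N=0$: here $N$ is initially Cohen--Macaulay by convention and the dimension filtration has no nonzero term, so the statement is to be read vacuously. Assume from now on $N\ne0$, so that by Lemma \ref{filtration-indim} the dimension filtration $0=N_{b-1}\subset N_b\subset\cdots\subset N_d=N$ has a genuine first nonzero term $N_b$, and $b=\mdim N=\dim N_b$.

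Next I would record the two identities that drive the argument. From Lemma \ref{filtration-indim} we have $\dim N_b=b=\mdim N$, and from Proposition \ref{dep N=dep Nb}, specialized to $i=b$, we have $\dep N=\dep N_b$. No other input is needed.

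Finally I would run the chain of equivalences through the relevant definitions:
$$N\text{ is initially Cohen--Macaulay}\iff\dep N=\mdim N\iff\dep N_b=\dim N_b\iff N_b\text{ is Cohen--Macaulay},$$
where the middle equivalence substitutes $\dep N=\dep N_b$ on the left and $\mdim N=\dim N_b$ on the right. This completes the proof.

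I do not expect any real obstacle at this stage: the technical heart, namely the claim $\dep N=\dep N_i$ for $b\le i\le d$, is exactly the content of Proposition \ref{dep N=dep Nb}, with its induction on $b$, its use of prime avoidance to produce a regular element outside the associated primes, and its bookkeeping of how the dimension filtration behaves after quotienting by that element. The only points requiring mild care here are: first, confirming that the object called "the first nonzero module" is indeed $N_b$ with $b=\mdim N$ (this is precisely Lemma \ref{filtration-indim}, which also rules out the possibility that the filtration starts lower); and second, comparing $\dep N_b$ with $\dim N_b$ rather than with $\dim N$ — that is, not conflating Cohen--Macaulayness of the submodule $N_b$ with that of $N$ itself, the two being different (indeed, $N$ is Cohen--Macaulay iff it is additionally counmixed, by Proposition \ref{ICM-unmixed}).
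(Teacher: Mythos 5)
Your proposal is correct and follows exactly the paper's own argument: it invokes Lemma \ref{filtration-indim} for $\dim N_b=b=\mdim N$ and Proposition \ref{dep N=dep Nb} for $\dep N=\dep N_b$, then chains the definitions. Nothing is missing.
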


\begin{proof}
	We may assume $N \ne 0$. By Lemma \ref{filtration-indim}, $\mathrm{indim}\ N = \dim N_b = b$.
	Assume $N$ is initially Cohen--Macaulay. By definition, $\mathrm{depth}\ N = \mathrm{indim}\ N = b$. By Proposition \ref{dep N=dep Nb}, $\mathrm{depth}\ N_b = \mathrm{depth}\ N$. Chaining these equalities, $\mathrm{depth}\ N_b = b = \dim N_b$, so $N_b$ is Cohen--Macaulay.
	
	Conversely, assume $N_b$ is Cohen--Macaulay. Then $\mathrm{depth}\ N_b = \dim N_b = b$. By Proposition \ref{dep N=dep Nb}, $\mathrm{depth}\ N = \mathrm{depth}\ N_b$. Thus $\mathrm{depth}\ N = b = \mathrm{indim}\ N$, which means $N$ is initially Cohen--Macaulay.
\end{proof}

\begin{corollary}\label{SCM-implies-ICM}
	Every sequentially Cohen--Macaulay module is initially Cohen--Macaulay.
\end{corollary}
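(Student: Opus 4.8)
The plan is to deduce this at once from Proposition \ref{ICM-filtration}. Recall that, by definition, a finitely generated module $N$ is sequentially Cohen--Macaulay precisely when every quotient $N_i/N_{i-1}$ of its dimension filtration is Cohen--Macaulay; as explained in the introduction and justified by Lemma \ref{filtration-indim}, the filtration is indexed so that $N_i = 0$ for $i < \mdim N$, and in particular the conditions begin at $i = \mdim N$.

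First I would dispose of the trivial case $N = 0$, which is initially Cohen--Macaulay by definition. For $N \neq 0$, write $b = \mdim N$. By Lemma \ref{filtration-indim} the dimension filtration of $N$ has the form $0 = N_{b-1} \subset N_b \subset \cdots \subset N_d = N$ with $N_b \neq 0$, so the first nonzero layer is $N_b/N_{b-1} = N_b$ itself. Since $N$ is sequentially Cohen--Macaulay, this layer is Cohen--Macaulay, hence $N_b$ is Cohen--Macaulay.

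Then I would apply Proposition \ref{ICM-filtration} directly: since the first nonzero module $N_b$ of the dimension filtration of $N$ is Cohen--Macaulay, $N$ is initially Cohen--Macaulay, as required. I do not anticipate any real obstacle; the only point needing care is the identification of the index at which the filtration becomes nonzero, and that is exactly the content of Lemma \ref{filtration-indim}. (If one preferred to avoid Proposition \ref{ICM-filtration}, one could instead combine Proposition \ref{dep N=dep Nb} with the Cohen--Macaulayness of $N_b$ to get $\dep N = \dep N_b = \dim N_b = b = \mdim N$ directly, but invoking Proposition \ref{ICM-filtration} is the cleanest route.)
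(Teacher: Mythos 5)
Your argument is correct and is essentially identical to the paper's own proof: both identify the first nonzero layer $N_b = N_b/N_{b-1}$ of the dimension filtration, note it is Cohen--Macaulay by the definition of sequentially Cohen--Macaulay, and conclude via Proposition \ref{ICM-filtration}. The extra care you take with the index $b = \mdim N$ via Lemma \ref{filtration-indim} matches the paper's implicit reliance on that lemma.
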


\begin{proof}
	An $R$-module $N$ is sequentially Cohen--Macaulay if every quotient $N_i/N_{i-1}$ of its dimension filtration (for $i \ge b = \mathrm{indim}\ N$) is Cohen--Macaulay \cite[Definition 4.1]{Schenzel1999}, \cite[Section III, 2.9]{Stanley1996}. In particular, the first non-zero factor $N_b/N_{b-1} = N_b$ must be Cohen--Macaulay. By Proposition \ref{ICM-filtration}, this is a sufficient condition for $N$ to be initially Cohen--Macaulay.
\end{proof}

Finally, we provide a characterization in purely dimension-theoretic terms.

\begin{proposition}\label{prop:ICM}
	Let $(R,M)$ be a local ring and $N$ a finitely generated $R$-module with $\mathrm{indim}\ N=b$. Then $N$ is initially Cohen--Macaulay if and only if there exists a sequence $a_1,\dots,a_b\in M$ such that for all $1\le i\le b$, the module $N_b/(a_1,\dots,a_i)N_b$ is counmixed of dimension $b-i$, where $N_b$ is the largest submodule of $N$ of dimension $b$.
\end{proposition}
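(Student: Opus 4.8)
The plan is to reduce the statement to a parameter-sequence criterion for the Cohen--Macaulayness of the first nonzero term $N_b$ of the dimension filtration of $N$. By Proposition~\ref{ICM-filtration}, $N$ is initially Cohen--Macaulay if and only if $N_b$ is Cohen--Macaulay, and by Lemma~\ref{filtration-indim} we have $\dim N_b=b$; the submodule named in the statement --- the largest submodule of $N$ of dimension $b$ --- is exactly this $N_b$. I would record two preliminary observations. First, $N_b$ is itself counmixed of dimension $b$: since $\ass_R N_b\subseteq\ass_R N$ we get $b=\mdim N\le\mdim N_b\le\dim N_b=b$, so $\mdim N_b=\dim N_b$ and Lemma~\ref{counmixed-equiv} applies. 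Second, any Cohen--Macaulay module is counmixed, because $\dep\le\mdim\le\dim$ forces $\mdim=\dim$ once the two ends coincide. The degenerate cases are immediate: if $N=0$, or if $b=0$ (where $N$ is initially Cohen--Macaulay by Corollary~\ref{maximal-ICM} and the displayed condition is vacuous), there is nothing to prove. So I assume $N\ne 0$, $b\ge1$, and work throughout with $L:=N_b$, writing $L_i=L/(a_1,\dots,a_i)L$ and $L_0=L$.

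For the forward implication, suppose $L$ is Cohen--Macaulay, so $\dep L=\dim L=b$. I would choose any $L$-regular sequence $a_1,\dots,a_b\in M$; one exists because $\dep L=b$. Since a regular element drops the dimension of a module by exactly one, and since the quotient of a Cohen--Macaulay module by a regular element is again Cohen--Macaulay, an easy induction on $i$ shows that $L_i$ is Cohen--Macaulay of dimension $b-i$ for every $1\le i\le b$; by the second preliminary observation, each $L_i$ is then also counmixed. This is exactly the required sequence.

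For the converse, suppose $a_1,\dots,a_b\in M$ are such that $L_i$ is counmixed of dimension $b-i$ for all $1\le i\le b$. The crux is to show that $a_1,\dots,a_b$ is an $L$-regular sequence. Granting this, $L_b$ is nonzero (being counmixed of dimension $0$, so $\ass_R L_b\ne\emptyset$), so the sequence is a genuine $L$-regular sequence of length $b$ in $M$; hence $\dep L\ge b$, and with $\dep L\le\dim L=b$ we get $\dep L=\dim L$, i.e.\ $L$ is Cohen--Macaulay, whence $N$ is initially Cohen--Macaulay by Proposition~\ref{ICM-filtration}. To establish the claim, fix $1\le i\le b$ and suppose, for contradiction, that $a_i$ is a zerodivisor on $L_{i-1}$. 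Then $a_i\in P$ for some $P\in\ass_R L_{i-1}$. Now $L_{i-1}$ is counmixed of dimension $b-i+1$ --- this is the hypothesis when $i\ge2$, and the first preliminary observation when $i=1$ --- so $\coht P=b-i+1$. Since $P\in\operatorname{Supp}(L_{i-1})$ and $a_i\in P$, we have $P\in\operatorname{Supp}(L_{i-1})\cap V(a_i)=\operatorname{Supp}(L_i)$, whence $\dim L_i\ge\coht P=b-i+1$, contradicting $\dim L_i=b-i$. Therefore $a_i$ is a nonzerodivisor on $L_{i-1}$, and thus $a_1,\dots,a_b$ is $L$-regular.

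The one step demanding genuine care --- and the main obstacle --- is this last support-theoretic argument: from ``$L_{i-1}$ counmixed and $a_i$ a zerodivisor'' one must produce an associated prime of $L_{i-1}$ of maximal coheight that persists in $\operatorname{Supp}(L_i)$, contradicting the prescribed drop in dimension. Everything else (the reduction through the dimension filtration, the behaviour of dimension and of the Cohen--Macaulay property under a single regular element, and the implication ``Cohen--Macaulay $\Rightarrow$ counmixed'') is routine and either appears in the excerpt or is standard.
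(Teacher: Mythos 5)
Your proposal is correct and follows essentially the same route as the paper's proof: reduce to the Cohen--Macaulayness of $N_b$ via Proposition~\ref{ICM-filtration}, take a regular sequence for the forward direction, and for the converse show the given sequence is regular by deriving a contradiction between counmixedness of $L_{i-1}$ and the prescribed drop in dimension. Your extra care on the two minor points (that $N_b$ itself is counmixed of dimension $b$, and that $L_b\ne 0$) only makes explicit what the paper leaves implicit.
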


\begin{proof}
	Suppose $N$ is initially Cohen--Macaulay. By Proposition \ref{ICM-filtration}, $N_b$ is Cohen--Macaulay of dimension $b$. Thus, there exists an $N_b$-regular sequence $a_1,\dots,a_b \in M$. For any $1 \le i \le b$, the quotient $N_b/(a_1,\dots,a_i)N_b$ is also Cohen--Macaulay of dimension $b-i$. By Proposition \ref{ICM-unmixed}, any Cohen--Macaulay module is counmixed. This proves the forward direction.
	
	Conversely, suppose such a sequence $a_1,\dots,a_b$ exists. We claim this sequence is $N_b$-regular. We prove this by induction on $i$. Fix $1 \le i \le b$ and set $N':=N_b/(a_1,\dots,a_{i-1})N_b$. (For $i=1$, $N'=N_b$.) By hypothesis, $N'$ is counmixed of dimension $b-(i-1)$, and $\dim N'/a_iN' = b-i = \dim N' - 1$.
	If $a_i$ were a zerodivisor on $N'$, then $a_i \in P$ for some $P\in\mathrm{Ass}_RN'$. Since $N'$ is counmixed, $\dim R/P = \dim N'$. This would imply $\dim N'/a_iN' \ge \dim R/P = \dim N'$, which contradicts $\dim N'/a_iN' = \dim N' - 1$. Thus, $a_i$ must be $N'$-regular. This completes the claim.
	
	Since $a_1,\dots,a_b$ is an $N_b$-regular sequence, $\mathrm{depth}\ N_b \ge b$. Proposition \ref{depth-mdim} gives the reverse inequality $\mathrm{depth}\ N_b \le \dim N_b = b$. Therefore, $\mathrm{depth}\ N_b = b$. By Proposition \ref{dep N=dep Nb}, $\mathrm{depth}\ N = \mathrm{depth}\ N_b = b = \mathrm{indim}\ N$. Hence, $N$ is initially Cohen--Macaulay.
\end{proof}

\begin{corollary}\label{cor:CM}
	Let $(R,M)$ be a local ring and $N$ a finitely generated $R$-module of dimension $d$. Then $N$ is Cohen-Macaulay if and only if there exists a sequence $a_1,\dots,a_d\in M$ such that $N/(a_1,\dots,a_i)N$ is counmixed of dimension $d-i$ for every $1\le i\le d$.
\end{corollary}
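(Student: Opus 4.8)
The plan is to obtain the corollary as the special case of Proposition \ref{prop:ICM} in which $N$ itself is counmixed. The bridge is Proposition \ref{ICM-unmixed}: $N$ is Cohen--Macaulay exactly when it is both initially Cohen--Macaulay and counmixed, and by Lemma \ref{counmixed-equiv} counmixedness of $N$ is the assertion $\mdim N=\dim N=d$. When this holds, the largest submodule of $N$ of dimension $\mdim N=d$ is $N$ itself, so $N_b=N_d=N$ and Proposition \ref{prop:ICM} specializes to exactly the present statement. The proof is then a matter of translating the two implications through this dictionary.

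\emph{Forward direction.} If $N$ is Cohen--Macaulay, then by Proposition \ref{ICM-unmixed} it is counmixed (hence $\mdim N=d$ and $N_d=N$) and initially Cohen--Macaulay, so the forward half of Proposition \ref{prop:ICM} with $b=d$ supplies a sequence $a_1,\dots,a_d\in M$ with $N/(a_1,\dots,a_i)N$ counmixed of dimension $d-i$ for every $1\le i\le d$. Concretely, one takes any maximal $N$-regular sequence in $M$: it has length $\dep N=d$, each quotient $N/(a_1,\dots,a_i)N$ is again Cohen--Macaulay, and Cohen--Macaulay modules are counmixed by Proposition \ref{ICM-unmixed}.

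\emph{Converse.} Given such a sequence $a_1,\dots,a_d$, the crucial step --- and the one I expect to be the main obstacle --- is to deduce from these hypotheses that $N$ is counmixed, i.e.\ $\mdim N=d$. Once that is in hand we again have $N_b=N_d=N$, the converse half of Proposition \ref{prop:ICM} shows $N$ is initially Cohen--Macaulay, and combining this with counmixedness via Proposition \ref{ICM-unmixed} gives that $N$ is Cohen--Macaulay. The natural way to establish counmixedness is to imitate the regularity argument inside the proof of Proposition \ref{prop:ICM}: writing $N^{(i)}=N/(a_1,\dots,a_i)N$, one proves by induction on $i$ that $a_i$ is regular on $N^{(i-1)}$, using $\dim N^{(i)}=\dim N^{(i-1)}-1$ to keep $a_i$ outside every associated prime of $N^{(i-1)}$ of top coheight (such primes exist because $N^{(i-1)}$ is counmixed); if the full sequence is then seen to be $N$-regular, we get $\dep N\ge d=\dim N$ and $N$ is Cohen--Macaulay directly. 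The sticking point is the base case $i=1$, where the induction needs $N=N^{(0)}$ itself to be counmixed while the hypotheses only bear on $N^{(i)}$ for $i\ge 1$; so one must argue separately that counmixedness of $N^{(1)}=N/a_1N$ in dimension $d-1$ already forces every $P\in\ass_RN$ to have $\coht P=d$, by carefully tracking the passage from $\ass_RN$ to $\ass_R(N/a_1N)$ under the single quotient by $a_1$ (in the spirit of Lemma \ref{8}). Making this transfer airtight is where the real difficulty of the converse lies.
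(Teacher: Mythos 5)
Your forward direction is correct, and your overall strategy --- specialize Proposition \ref{prop:ICM} to the case $N_b=N$ and pass through Proposition \ref{ICM-unmixed} --- is exactly the derivation the paper intends (the corollary is given there without a written proof). But the gap you flag in the converse is a genuine one, and it cannot be closed: the step you hope for, namely that counmixedness of $N/a_1N$ in dimension $d-1$ forces every $P\in\ass_RN$ to have coheight $d$, is false. Take $R=\K[[x,y]]$ and $N=R/(x^2,xy)$, so that $\ass_RN=\{(x),(x,y)\}$, $\dim N=1=d$ and $\dep N=0$. The single element $a_1=y$ gives $N/a_1N\cong R/(x^2,y)$, which is counmixed of dimension $0=d-1$, so the hypothesis of the converse is satisfied, yet $N$ is neither counmixed nor Cohen--Macaulay. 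The same construction works in every dimension, e.g.\ $N=\K[[x,y,z]]/(x^2,xy)$ with the sequence $y,z$.

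The reason Proposition \ref{prop:ICM} does not transfer is precisely the one you isolate: there the base case of the regularity induction comes for free because $N_b$ is automatically counmixed ($\ass_RN_b=\{P\in\ass_RN\mid\dim R/P=b\}$ by Schenzel's filtration), whereas $N$ itself need not be. So the statement as printed is false; it becomes true --- and then follows exactly along the lines you describe --- once one adds the $i=0$ case, i.e.\ requires in addition that $N$ itself be counmixed of dimension $d$. With that extra hypothesis $\mdim N=\dim N=d$, hence $N_d=N$, the converse of Proposition \ref{prop:ICM} shows $N$ is initially Cohen--Macaulay, and Proposition \ref{ICM-unmixed} upgrades this to Cohen--Macaulay. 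You were right to treat this point as the crux rather than wave it through; the honest conclusion is that no argument can fill it for the statement as written.
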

\qed

%%%%%%%%%%%%%%%%%%%%%%%%%%%%%%%%%%%%%%%%%%%%%%%%%%%%%%%%%%%%%%%%%%

\section{Characterizations of Initially Cohen--Macaulay Simplicial Complexes}
\label{sec5}

\quad This section extends the theory by developing specific characterizations for initially Cohen--Macaulay Stanley--Reisner rings. The focus is on criteria related to combinatorial skeletons and reduced homology.

We fix $R=\K[x_1,\dots,x_n]$ as the polynomial ring with the standard grading $\deg(x_i)=1$ for all $i$. Let $I\subset R$ be a squarefree monomial ideal, which is the Stanley--Reisner ideal of a simplicial complex $\Delta$ on the vertex set $X=\{x_1,\dots,x_n\}$, so $I=I_\Delta$.
We define a simplicial complex $\Delta$ as \emph{initially Cohen--Macaulay} over $\K$ if its Stanley--Reisner ring $\K[\Delta]$ is initially Cohen--Macaulay. We may use $\Delta_I$ to emphasize that its Stanley--Reisner ideal is $I$.

Let $M=(x_1,\dots,x_n)$ be the unique graded maximal ideal of $R$. Following \cite{AhmedFrobergNamiq2023}, for any integer $k>0$, $I_k$ denotes the squarefree part of the intersection $I\cap M^k$. This ideal $I_k$ consists of all squarefree monomials in $I$ with degree at least $k$. The following lemma is essential for our subsequent analysis.

\begin{lemma}\label{Alexander dual}
Let $I$ be a squarefree monomial ideal in $R$. For every integer $k\ge\min\{\deg(u)\mid u\in I\}$,
$$(\Delta_{I_k})^\vee=(\Delta_I^\vee)^{n-k-1}.$$
\end{lemma}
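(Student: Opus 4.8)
The plan is to prove the identity by unwinding both sides into purely set-theoretic conditions on subsets $G\subseteq X$ and checking these conditions coincide; no homological input is needed. Throughout write $x_F=\prod_{x_i\in F}x_i$ for $F\subseteq X$, so that $\Delta_J=\{F\subseteq X\mid x_F\notin J\}$ for any squarefree monomial ideal $J$, and recall from the definition of the Alexander dual that $G\in\Delta^\vee$ if and only if $X\setminus G\notin\Delta$.

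First I would record the combinatorial meaning of membership in $I_k$. Since $I_k$ is generated by the squarefree monomials of $I$ of degree $\ge k$, and one squarefree monomial divides another precisely when the corresponding index sets are nested, one obtains: for $F\subseteq X$, $x_F\in I_k$ if and only if there exists $F'\subseteq F$ with $x_{F'}\in I$ and $|F'|\ge k$. Equivalently, $F\notin\Delta_{I_k}$ if and only if $F$ contains a non-face of $\Delta_I$ of cardinality at least $k$.

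Next I would dualize. For $G\subseteq X$ we have $G\in(\Delta_{I_k})^\vee$ iff $X\setminus G\notin\Delta_{I_k}$, which by the previous step means there is $F'\subseteq X\setminus G$ with $x_{F'}\in I$ and $|F'|\ge k$. Setting $G':=X\setminus F'$ translates this into: there is $G'\supseteq G$ with $X\setminus G'\notin\Delta_I$, i.e. $G'\in\Delta_I^\vee$, and $|G'|=n-|F'|\le n-k$. Because $\Delta_I^\vee$ is a simplicial complex, the existence of such a $G'$ is equivalent to $G$ itself lying in $\Delta_I^\vee$ with $|G|\le n-k$. Hence $(\Delta_{I_k})^\vee=\{G\in\Delta_I^\vee\mid |G|\le n-k\}$, and since a face has dimension $\le n-k-1$ exactly when it has cardinality $\le n-k$, the right-hand side is precisely $(\Delta_I^\vee)^{n-k-1}$.

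The argument is essentially a bookkeeping of complements, so I do not expect a serious obstacle; the two points needing care are (i) the equivalence ``$x_F\in I_k\iff\exists\,F'\subseteq F$ with $x_{F'}\in I$ and $|F'|\ge k$'', which uses that $I$ is squarefree so that divisibility of monomials corresponds to inclusion of their supports, and (ii) the reduction of the ``$\exists\,G'\supseteq G$'' condition to a condition on $G$ alone, which relies only on $\Delta_I^\vee$ being closed under taking subsets. The hypothesis $k\ge\min\{\deg u\mid u\in I\}$ mainly guarantees $I\neq 0$, so that $I_k$ and both complexes are genuinely defined, and places $k$ in the range where $n-k-1\le\dim\Delta_I^\vee$ so that the right-hand skeleton is an honest truncation; the computation above in fact goes through verbatim for every $k$.
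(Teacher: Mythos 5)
Your argument is correct, and it is genuinely different from the one in the paper. The paper proves the identity by induction on $k$: it describes the generators of $I_{q+1}$ in terms of those of $I_q$ (degree-$q$ generators multiplied by new variables, plus the higher-degree generators), translates this into a statement about facets of the dual complexes, and concludes that $(\Delta_{I_{q+1}})^\vee$ is obtained from $(\Delta_{I_q})^\vee$ by passing to the next skeleton, peeling off one dimension per step. You instead identify both sides face by face in one shot: membership of $x_F$ in $I_k$ is divisibility by a squarefree monomial of $I$ supported on a subset of $F$ of size at least $k$, and complementation turns this directly into ``$G\in\Delta_I^\vee$ and $|G|\le n-k$.'' Your two flagged points are exactly the right ones and both hold (divisibility of squarefree monomials is containment of supports, and $\Delta_I^\vee$ is closed under subsets since a superset of a non-face of $\Delta_I$ is a non-face), and the reverse implication in your reduction is supplied by taking $G'=G$. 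What your route buys is brevity and uniformity in $k$ --- no bookkeeping of which facets of $(\Delta_{I_q})^\vee$ have top dimension, and no induction; what the paper's route buys is an explicit picture of the one-step passage $I_q\to I_{q+1}$ as skeleton truncation, which matches the iterative construction of $I_k$ used in the cited earlier work. Your closing observation that the hypothesis $k\ge\min\{\deg(u)\mid u\in I\}$ is not actually needed for the computation (beyond $I\ne 0$) is also accurate.
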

\begin{proof}
We proceed by induction on $k$. The base case, where $k=\min\left\{\deg(u)\mid u\in I\right\}$, is trivial. By definition, $I_k$ contains all squarefree monomials in $I$ of degree at least $k$, which implies $I=I_k$. Thus,
$$\left(\Delta_{I_k}\right)^\vee=\Delta_I^\vee=\left(\Delta_I^\vee\right)^{n-k-1},$$
as the Alexander dual $\Delta_I^\vee$ has dimension $n-k-1$.

Next, we prove the result for $q+1$, assuming $q\ge k$. The Alexander dual of $\Delta_{I_{q+1}}$ is, by definition,
$$\left(\Delta_{I_{q+1}}\right)^\vee=\left\langle\overline{e_u}\mid u\in\mathcal{G}(I_{q+1})\right\rangle,$$
where $e_u:=\supp(u)=\{x_i\mid x_i\ \text{ divides }u\}$ and $\overline{e_u}=X\setminus e_u$. The ideal $I_{q+1}$ is generated by
$$I_{q+1}=\left(ux_i\mid\deg(u)=q,\ u\in I_q,\ x_i\notin e_u\right)+\left(v\in I_q\mid\deg(v)>q\right).$$
The Alexander dual therefore becomes
$$\left(\Delta_{I_{q+1}}\right)^\vee=\left\langle\overline{e_{u x_i}}\mid\deg(u)=q,\ u\in I_q,\ x_i\notin e_u\right\rangle\cup\left\langle \overline{e_v} \mid\deg(v)>q,\ v\in I_q\right\rangle.$$
This can be rewritten as
$$(\Delta_{I_{q+1}})^\vee=\lr{e_{\overline{ux_i}}\mid\dim e_{\overline{u}}=n-q-1,e_{\overline{u}}\in\Delta_{I_{q}}^\vee,x_i\in e_{\overline{u}}}\cup\lr{e_{\overline{v}}\mid\dim e_{\overline{v}}<n-q-1,e_{\overline{v}}\in\Delta_{I_{q}}^\vee}.$$
For each $u\in I_q$ with $\deg(u)=q$, the corresponding face $\overline{e_u}\in(\Delta_{I_q})^\vee$ has dimension $n-q-1$. For each $x_i\in\overline{e_u}$, the face $\overline{e_{ux_i}}=\overline{e_u}\setminus\{ x_i \}$ has dimension $n-q-2$. All other faces $\overline{e_v}$ (where $\deg(v)>q$) have dimensions strictly less than $n-q-1$.

Thus, the facets of $\left(\Delta_{I_{q+1}}\right)^\vee$ are generated by removing a single vertex from the $(n-q-1)$-dimensional facets of $\left(\Delta_{I_q}\right)^\vee$. This means
$$(\Delta_{I_{q+1}})^\vee=\left((\Delta_{I_q})^\vee\right)^{n-q-2}.$$
Applying the induction hypothesis, $(\Delta_{I_q})^\vee=(\Delta_I^\vee)^{n-q-1}$, we get
$$(\Delta_{I_{q+1}})^\vee=\left((\Delta_I^\vee)^{n-q-1}\right)^{n-q-2}=(\Delta_I^\vee)^{n-(q+1)-1}.$$
This completes the induction.
\end{proof}

This lemma allows us to extend Theorem 3 of Eagon and Reiner \cite{EagonReiner1998}.
\begin{lemma}\label{EagonReiner}
Let $I$ be a squarefree monomial ideal and $k\ge\min\{\deg(u)\mid u\in I\}$. Then $I_k$ has a linear resolution if and only if its Alexander dual complex, $(\Delta_I^\vee)^{n - k - 1}$, is Cohen--Macaulay.
\end{lemma}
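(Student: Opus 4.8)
The plan is to read the statement off the classical Eagon--Reiner theorem \cite[Theorem 3]{EagonReiner1998} by feeding it the complex $\Delta_{I_k}$, and then to rewrite the resulting Alexander dual using Lemma \ref{Alexander dual}. The key observation is that $I_k$ is visibly the Stanley--Reisner ideal of the complex $\Delta_{I_k}$, so the lemma is nothing but Alexander duality for the pair $\Delta_{I_k}$, $(\Delta_{I_k})^\vee$, combined with the identification $(\Delta_{I_k})^\vee=(\Delta_I^\vee)^{n-k-1}$ supplied by Lemma \ref{Alexander dual}.

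Concretely, I would set $\Gamma:=(\Delta_I^\vee)^{n-k-1}$. By Lemma \ref{Alexander dual} we have $\Gamma=(\Delta_{I_k})^\vee$; since Alexander duality is an involution on complexes over the fixed ground set $X=\{x_1,\dots,x_n\}$, this gives $\Gamma^\vee=\Delta_{I_k}$ and hence $I_{\Gamma^\vee}=I_{\Delta_{I_k}}=I_k$. Applying Eagon--Reiner to $\Gamma$, namely ``$\K[\Gamma]$ is Cohen--Macaulay over $\K$ if and only if $I_{\Gamma^\vee}$ has a linear resolution'', and substituting these identities, yields precisely ``$(\Delta_I^\vee)^{n-k-1}$ is Cohen--Macaulay if and only if $I_k$ has a linear resolution''.

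The only genuine subtlety, hence the ``hard part'', is bookkeeping the conventions, which I would make explicit. First, the involutivity $(\,\cdot\,)^{\vee\vee}=\mathrm{id}$ is applied over a fixed ground set of size $n$, which is exactly the ambient setting here. Second, one should fix the meaning of ``$I_k$ has a linear resolution'' when $I_k$ is not generated in a single degree: every minimal monomial generator of $I_k$ is a squarefree monomial of $I$ of degree $\ge k$, so $\reg I_k\ge k$ always, and $\reg I_k=k$ forces $I_k$ to be generated in degree $k$; thus the phrase is unambiguous and equivalent to $\reg I_k=k$. Correspondingly, since $k\ge\min\{\deg u\mid u\in I\}$ and the facets of $\Delta_I^\vee$ are the complements of the minimal non-faces of $\Delta_I$, we have $n-k-1\le\dim\Delta_I^\vee$, so $(\Delta_I^\vee)^{n-k-1}$ is pure of dimension $n-k-1$ whenever $I_k\ne 0$ and Eagon--Reiner is invoked for a genuine complex of that dimension; the degenerate cases $I_k=0$ and $I_k=R$ are trivially consistent on both sides. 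Alternatively one may replace Eagon--Reiner by Terai's formula $\reg I_k=\pdim\K[(\Delta_{I_k})^\vee]$ and the Auslander--Buchsbaum identity, using $\dim\K[(\Delta_I^\vee)^{n-k-1}]=n-k$, which makes the equivalence with Cohen--Macaulayness of the skeleton immediate.
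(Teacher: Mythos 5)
Your proposal is exactly the paper's (unwritten) proof: the lemma is stated as an immediate consequence of applying the classical Eagon--Reiner theorem to $\Delta_{I_k}$ and identifying $(\Delta_{I_k})^\vee=(\Delta_I^\vee)^{n-k-1}$ via Lemma \ref{Alexander dual}, which is precisely your argument. One minor slip in your side remarks: the skeleton $(\Delta_I^\vee)^{n-k-1}$ need \emph{not} be pure when $I_k\ne 0$ (e.g.\ $I=(x,yzw)$ in four variables with $k=2$ gives a non-pure $1$-skeleton), but this does not affect the argument since Eagon--Reiner does not presuppose purity — non-purity simply lands on the "not Cohen--Macaulay / not linear" side of the equivalence.
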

\qed

\begin{remark}\label{9}
From Lemma \ref{EagonReiner} and \cite[Corollary 2.6]{AhmedFrobergNamiq2023}, we can conclude that if $\Delta^i$ is Cohen--Macaulay, then $\Delta^{i-1}$ is also Cohen--Macaulay.
\end{remark}

Following \cite{Namiq2025}, we say a squarefree monomial ideal $I$ has a \emph{degree resolution} if $\reg I=\deg I$, where $\deg I=\max\{\deg(u) \mid u\in\mathcal{G}(I)\}$ and $\mathcal{G}(I)$ is the set of minimal generators of $I$. We now connect the initially Cohen--Macaulay property to this notion.

\begin{proposition}\label{PCM degree resolution}
	A simplicial complex $\Delta$ is initially Cohen--Macaulay if and only if its Alexander dual ideal $I_{\Delta^\vee}$ has a degree resolution.
\end{proposition}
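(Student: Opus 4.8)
The plan is to connect the initially Cohen--Macaulay property of $\Delta$ to a statement about the Alexander dual ideal $I_{\Delta^\vee}$ by routing everything through the dimension filtration on one side and the truncation ideals $I_k$ and Lemma~\ref{EagonReiner} on the other. Write $I = I_{\Delta^\vee}$, so that $\K[\Delta]$ is, by Alexander--Reisner duality, the object whose homological invariants are governed by $I$; set $n = |X|$. The key translation is the classical Terai/Eagon--Reiner identity $\pdim(R/I) = \reg(I^\vee) $ together with $\reg(R/I) = \pdim(I^\vee) - \ldots$; more usefully here, $\pdim \K[\Delta]$ relates to the resolution of $I_{\Delta^\vee}$. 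Since by Proposition~\ref{pdim-bight} applied to $\K[\Delta]$ over the Cohen--Macaulay ring $R$ we have $\pdim\K[\Delta] \ge \bight\K[\Delta]$ with equality precisely when $\K[\Delta]$ is initially Cohen--Macaulay, and since $\bight\K[\Delta]$ corresponds to $\deg I_{\Delta^\vee}$ (the maximal degree of a minimal generator of $I^\vee$ equals $n$ minus the smallest dimension of a facet of $\Delta$, i.e.\ $n - \mdim\Delta - 1$, while the facets of $\Delta$ correspond to the minimal primes / the generators of $I^\vee$), the statement $\pdim\K[\Delta] = \bight\K[\Delta]$ should match exactly $\reg I_{\Delta^\vee} = \deg I_{\Delta^\vee}$, which is the definition of a degree resolution.

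Concretely, the steps I would carry out are: (1) Record the dictionary between $\Delta$ and $I^\vee := I_{\Delta^\vee}$: the facets $F$ of $\Delta$ correspond bijectively to the minimal generators $\prod_{x_i \in X\setminus F} x_i$ of $I^\vee$, so $\deg I^\vee = n - 1 - \mdim\Delta$ and also $\reg I^\vee$ relates to $\dim\Delta$; in particular $\reg I^\vee \ge \deg I^\vee$ always, with the gap measuring the spread of facet dimensions. (2) Use the Auslander--Buchsbaum / Terai formula $\pdim_R \K[\Delta] = \reg_R I^\vee$ (or derive it via Hochster's formula, but it is standard) to convert $\pdim\K[\Delta]$ into $\reg I^\vee$. (3) Identify $\bight \K[\Delta]$: the associated primes of $\K[\Delta]$ are the $P_F = (x_i \mid x_i \notin F)$ for facets $F$, with $\hte P_F = n - 1 - \dim F$, so $\bight\K[\Delta] = n - 1 - \min_F \dim F = n - 1 - \mdim\Delta = \deg I^\vee$. (4) Apply Proposition~\ref{pdim-bight} to $\K[\Delta]$ over $R$ localized at $M$ (or in the graded category, where Auslander--Buchsbaum holds verbatim): $\K[\Delta]$ is initially Cohen--Macaulay iff $\pdim\K[\Delta] = \bight\K[\Delta]$, which by (2) and (3) says $\reg I^\vee = \deg I^\vee$, i.e.\ $I^\vee$ has a degree resolution.

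The main obstacle I anticipate is step (2)--(3), pinning down $\bight \K[\Delta]$ and matching it to $\deg I_{\Delta^\vee}$ cleanly, and making sure the $n-1-\mdim\Delta$ bookkeeping is correct (off-by-one errors between "dimension of a face" and "size of a face" and between $\coht$ in $\K[\Delta]$ versus $\hte$ of the corresponding minimal prime of $R/I_\Delta$). One must be careful that $\mdim\K[\Delta]$ as defined in Section~\ref{sec3} (infimum of $\coht$ of associated primes of $\K[\Delta]$ as an $R$-module) equals $\mdim\Delta$ as used in the combinatorial statements, namely $1 + \min\{\dim F : F \in \f(\Delta)\}$ --- the associated primes of $\K[\Delta]$ are exactly the minimal primes $P_F$, $F$ a facet, and $\coht_R P_F = \dim R/P_F = |F| = \dim F + 1$, so indeed $\mdim\K[\Delta] = \min_F |F|$; I would state this as a small lemma. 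Alternatively, and perhaps more in the spirit of the surrounding development, one can bypass Proposition~\ref{pdim-bight} entirely and instead invoke Lemma~\ref{EagonReiner} directly at $k = \deg I^\vee$ together with Proposition~\ref{PCM CM} (that $\Delta$ is initially Cohen--Macaulay iff $\Delta^{\mdim\Delta}$ is Cohen--Macaulay) --- but since Proposition~\ref{PCM CM} is proved \emph{after} this result in the paper, I would keep to the $\pdim = \bight$ route, which only uses material already established.
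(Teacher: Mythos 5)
Your proposal is correct and follows essentially the same route as the paper's proof: Terai's formula $\pdim\K[\Delta]=\reg I_{\Delta^\vee}$, the identification $\bight I_\Delta=\deg I_{\Delta^\vee}$ via the facet--generator dictionary, and Proposition~\ref{pdim-bight}. Your careful bookkeeping of the off-by-one issues and the verification that $\mdim\K[\Delta]=\min_F|F|$ is exactly the implicit content of the paper's one-line appeal to ``the fact that $\bight I=\deg I^\vee$.''
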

\begin{proof}
	Let $I=I_{\Delta}$. We use the fact that $\bight I=\deg I^\vee$. 
	
	For the forward direction, assume $\Delta$ is initially Cohen--Macaulay. Applying \cite[Theorem 0.2]{Terai1997} along with Proposition \ref{pdim-bight}, we obtain
	$$\reg I^\vee=\pdim\K[\Delta]=\bight I=\deg I^\vee.$$
	This shows that $I^\vee$ has a degree resolution.
	
	Conversely, if $I^\vee$ has a degree resolution, then
	$$\pdim\K[\Delta]=\reg I^\vee=\deg I^\vee=\bight I.$$
	By Proposition~\ref{pdim-bight}, this equality implies that $\Delta$ is initially Cohen--Macaulay.
\end{proof}

The following proposition provides a key characterization of the initially Cohen--Macaulay property in terms of a specific skeleton.

\begin{proposition}\label{PCM CM}
	A simplicial complex $\Delta$ is initially Cohen--Macaulay if and only if its initial-dimension skeleton $\Delta^{\mdim\Delta}$ is Cohen--Macaulay.
\end{proposition}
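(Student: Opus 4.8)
The plan is to translate the statement about $\Delta^{\mdim\Delta}$ being Cohen--Macaulay into a statement about the Alexander dual of $I_\Delta$, where we can invoke the degree-resolution characterization from Proposition \ref{PCM degree resolution} together with the skeleton/Alexander-dual dictionary established in Lemma \ref{Alexander dual} and Lemma \ref{EagonReiner}. Write $I = I_\Delta$, $n = |X|$, and $b = \mdim\Delta$. The first step is to pin down the numerical bookkeeping: since $\bight I = \deg I^\vee$ and $\dim\K[\Delta] = \coht I$, the initial dimension $b = \mdim\K[\Delta]$ should correspond, via $\hte P + \coht P \le n$ applied to associated primes of $R/I$ (all of which satisfy $\hte P + \coht P = n$ here, as $R$ is a polynomial ring), to the quantity $n - \bight I = n - \deg I^\vee$. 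So $b = n - \deg I^\vee$, equivalently the minimal generating degree of $I$ is $n - b$ after passing through Alexander duality correctly; I would state this as a short preliminary computation.

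Next I would identify $\Delta^b$ with a complex whose Stanley--Reisner ideal is $I_k$ for the appropriate $k$. The key input is Lemma \ref{Alexander dual}: $(\Delta_{I_k})^\vee = (\Delta_I^\vee)^{n-k-1}$. Dualizing skeletons, the relation ``$\Delta^b$ has Alexander dual equal to a certain pure skeleton of $\Delta^\vee$'' lets me set $n - k - 1 = $ (the skeleton index forced by $b$); I expect $k = n - b - 1$ so that $(\Delta_{I_{n-b-1}})^\vee = (\Delta_I^\vee)^{b}$ — but I must double-check whether it is $\Delta^b$ or a pure skeleton that appears, and reconcile this with Remark \ref{9}, which tells me Cohen--Macaulayness of skeletons propagates downward, so working with either the ordinary or pure skeleton at the top relevant index suffices. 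Then Lemma \ref{EagonReiner} gives: $\Delta^b$ Cohen--Macaulay $\iff$ $(\Delta_I^\vee)^{n-k-1}$ Cohen--Macaulay $\iff$ $I_k$ has a linear resolution for the relevant $k$.

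The final step chains these equivalences with Proposition \ref{PCM degree resolution}. That proposition says $\Delta$ is initially Cohen--Macaulay $\iff$ $I^\vee$ has a degree resolution, i.e. $\reg I^\vee = \deg I^\vee$. I would argue that $\reg I^\vee = \deg I^\vee$ is equivalent to the truncation $I_k$ (in the squarefree-intersection sense of \cite{AhmedFrobergNamiq2023}, applied to $I^\vee$ or dualized appropriately) having a linear resolution at the degree $k = \deg I^\vee$, since a degree resolution means all higher Betti numbers sit on the ``diagonal'' determined by the top generating degree — this is precisely the content linking $\reg I^\vee = \deg I^\vee$ to a linear resolution of the top-degree piece. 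Combining with the previous paragraph's identification of that linear-resolution condition with Cohen--Macaulayness of $\Delta^b$ closes the loop. The main obstacle I anticipate is purely the index arithmetic: correctly matching $b = \mdim\Delta$ with the skeleton index $n-k-1$ and with the generating degree, and making sure the ``squarefree part of $I \cap M^k$'' construction lines up on the correct side of the Alexander duality (i.e. whether I truncate $I$ or $I^\vee$), since an off-by-one there would break the whole chain. Once the dictionary $b \leftrightarrow k \leftrightarrow$ (generating degree) is nailed down, each individual equivalence is a direct citation of an already-proved lemma.
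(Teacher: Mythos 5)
Your route is the paper's route: Proposition \ref{PCM degree resolution}, then the truncation--skeleton dictionary of Lemma \ref{Alexander dual} and Lemma \ref{EagonReiner}. But two of the points you flag as ``to be checked'' are exactly where the content lies, and one of them is a genuine missing ingredient. On the bookkeeping: to land on a skeleton of $\Delta$ rather than of $\Delta^\vee$ you must truncate $I^\vee$, not $I$. Apply Lemma \ref{Alexander dual} to $I^\vee$ with $k=\deg I^\vee$, so that $(\Delta_{(I^\vee)_k})^\vee=(\Delta_{I^\vee}^\vee)^{n-k-1}=\Delta^{n-k-1}$, and Lemma \ref{EagonReiner} (again applied to $I^\vee$) gives: $(I^\vee)_k$ has a linear resolution iff $\Delta^{n-k-1}$ is Cohen--Macaulay. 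The arithmetic is $k=\deg I^\vee=\bight I=n-\mdim\K[\Delta]=n-\mdim\Delta-1$, hence $n-k-1=\mdim\Delta$; your sketch blurs the shift $\mdim\K[\Delta]=\mdim\Delta+1$ when you write $b=\mdim\K[\Delta]=n-\deg I^\vee$ after having set $b=\mdim\Delta$.

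The substantive gap is the step ``$\reg I^\vee=\deg I^\vee$ iff $(I^\vee)_{\deg I^\vee}$ has a linear resolution.'' This is not definitional, and the heuristic you give for it is incorrect: a degree resolution does not force the Betti numbers of $I^\vee$ onto a single diagonal --- that would be a linear resolution of $I^\vee$ itself, which by Eagon--Reiner corresponds to $\Delta$ being Cohen--Macaulay, a strictly stronger conclusion than the one being proved. The equivalence you need is \cite[Proposition 3.3]{Namiq2025}, which the paper invokes at precisely this point; without that citation (or an independent proof of it) your chain of equivalences has a missing link.
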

\begin{proof}
	Let $I=I_{\Delta}$. From Proposition \ref{PCM degree resolution}, $\Delta$ is initially Cohen--Macaulay precisely when $I^\vee$ has a degree resolution. By \cite[Proposition 3.3]{Namiq2025}, this is equivalent to $(I^\vee)_k$ having a linear resolution, where $k=\deg I^\vee$.
	
	We relate $k$ to the initial dimension of $\Delta$:
	$$k=\deg I^\vee=\bight I=n-\mdim\K[\Delta]=n-(\mdim\Delta+1) = n-k-1.$$
	Hence, $\mdim\Delta=n-k-1$.
	
	By Lemma \ref{EagonReiner}, the ideal $(I^\vee)_k$ has a linear resolution if and only if the complex $(\Delta_{I^\vee}^\vee)^{n-k-1}$ is Cohen--Macaulay. Using Lemma \ref{Alexander dual}, this complex is $\Delta^{\mdim \Delta}$. Chaining these equivalences gives the desired conclusion.
\end{proof}

The next result refines Theorem 3.3 from Duval's work \cite{Duval1996}. It follows from Proposition \ref{PCM CM}, Remark \ref{9}, and the definition of sequentially Cohen--Macaulay simplicial complexes.

\begin{proposition}\label{SCM}
	A simplicial complex $\Delta$ is sequentially Cohen--Macaulay if and only if the pure skeleton $\Delta^{[i]}$ is Cohen--Macaulay for all $\mdim \Delta \le i \le \dim \Delta$.
\end{proposition}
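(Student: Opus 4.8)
The plan is to obtain Proposition \ref{SCM} from Duval's characterization \cite[Theorem 3.3]{Duval1996}, which says that $\Delta$ is sequentially Cohen--Macaulay if and only if the pure skeleton $\Delta^{[i]}$ is Cohen--Macaulay for \emph{every} $0\le i\le\dim\Delta$. The forward implication is then immediate, since $\{\,i:\mdim\Delta\le i\le\dim\Delta\,\}$ is a subrange of $\{\,i:0\le i\le\dim\Delta\,\}$. For the converse, the whole point is to show that Cohen--Macaulayness of $\Delta^{[i]}$ for $\mdim\Delta\le i\le\dim\Delta$ already forces it for the smaller indices $0\le j<\mdim\Delta$ as well, so that Duval's criterion applies.

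To do this I would first pin down the combinatorial meaning of $\mdim\Delta$. Since $\K[\Delta]$ is reduced, $\ass_R\K[\Delta]=\Min(I_\Delta)=\{\,P_F=(x_i\mid x_i\notin F)\mid F\in\f(\Delta)\,\}$, and $\coht P_F=|F|=\dim F+1$; hence $\mdim\K[\Delta]=\min\{\dim F+1\mid F\in\f(\Delta)\}$, and combined with the identity $\mdim\K[\Delta]=\mdim\Delta+1$ extracted from the proof of Proposition \ref{PCM CM} this shows that $\mdim\Delta$ is exactly the smallest dimension of a facet of $\Delta$. The second ingredient is the identity $\Delta^{[j]}=\Delta^{j}$ for every $j\le\mdim\Delta$: if $F\in\Delta$ has $\dim F<j$, then $F$ lies in a facet $G$ with $\dim G\ge\mdim\Delta\ge j>\dim F$, so $F$ extends inside $G$ to a face of dimension exactly $j$, whence $F\in\Delta^{[j]}$; the reverse inclusion $\Delta^{[j]}\subseteq\Delta^{j}$ always holds. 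In particular $\Delta^{[\mdim\Delta]}=\Delta^{\mdim\Delta}$.

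The converse now follows by assembly. The hypothesis gives that $\Delta^{[\mdim\Delta]}=\Delta^{\mdim\Delta}$ is Cohen--Macaulay (equivalently, $\Delta$ is initially Cohen--Macaulay by Proposition \ref{PCM CM}). Iterating Remark \ref{9} downward from index $\mdim\Delta$ yields that $\Delta^{j}$ is Cohen--Macaulay for all $0\le j\le\mdim\Delta$, and since $\Delta^{j}=\Delta^{[j]}$ in this range, the pure skeletons $\Delta^{[j]}$ are Cohen--Macaulay there too. Together with the hypothesis this covers all of $0\le i\le\dim\Delta$, so $\Delta$ is sequentially Cohen--Macaulay by \cite[Theorem 3.3]{Duval1996}.

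There is no real obstacle here; the proof is bookkeeping, and the one step that carries the content is the identification of $\mdim\Delta$ with the minimum facet dimension and the consequent collapse $\Delta^{[j]}=\Delta^{j}$ for $j\le\mdim\Delta$, which is precisely what renders the lower range of indices redundant in Duval's criterion. One should also observe that for $i$ in the asserted range a top-dimensional facet truncates to an $i$-face, so each $\Delta^{[i]}$ is a genuine nonvoid complex, and dispose of the degenerate cases $\Delta=\emptyset$ and $\Delta=\{\emptyset\}$ (where $\mdim\Delta<0$) separately, as there $\K[\Delta]$ is $0$ or $\K$ and both sides of the equivalence hold trivially.
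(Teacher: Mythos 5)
Your proof is correct and takes essentially the same route the paper intends: the paper's ``proof'' is a one-line citation of Proposition \ref{PCM CM}, Remark \ref{9}, and Duval's skeleton criterion, and your write-up supplies exactly the missing details (that $\mdim\Delta$ is the minimum facet dimension and that $\Delta^{[j]}=\Delta^{j}$ for $j\le\mdim\Delta$, which makes the lower indices in Duval's criterion redundant).
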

\qed

The following propositions describe properties of truncated complexes.

\begin{proposition}
	Let $\Delta$ be a simplicial complex on $n$ vertices and $I=I_\Delta$. For any integer $k>\min\{\deg(u)\mid u\in I\}$, the truncated complex $\Delta_{I_k}$ is initially Cohen--Macaulay and
	$$\pdim \K[\Delta_{I_k}]=n-k+1.$$
\end{proposition}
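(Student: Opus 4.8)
The plan is to describe the faces of $\Delta_{I_k}$ explicitly, compute $\mdim\Delta_{I_k}$, and then invoke Proposition \ref{PCM CM} together with the Auslander--Buchsbaum formula. I may assume $k\le n$ (otherwise $I_k=0$; the boundary value $k=n+1$ gives $\K[\Delta_{I_k}]=R$, a polynomial ring, which is initially Cohen--Macaulay with $\pdim=0=n-k+1$).

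First I would record the combinatorics of $\Delta_{I_k}$. Since $I_k$ is generated by squarefree monomials of degree at least $k$, no monomial of degree at most $k-1$ lies in $I_k$, so \emph{every} subset of $X$ of size at most $k-1$ is a face of $\Delta_{I_k}$. Two consequences follow at once. The skeleton $(\Delta_{I_k})^{k-2}$ is exactly the $(k-2)$-skeleton of the $(n-1)$-simplex on $X$ (all subsets of size $\le k-1$). And, because $I_k\ne 0$ forces $X\notin\Delta_{I_k}$, any face of size $\le k-2$ can be enlarged by one vertex (staying of size $\le k-1$, hence still a face), so no such set is a facet and every facet of $\Delta_{I_k}$ has size at least $k-1$; thus $\mdim\Delta_{I_k}\ge k-2$.

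Next I would use the hypothesis $k>\min\{\deg(u)\mid u\in I\}$ to get $\mdim\Delta_{I_k}=k-2$. Choose a nonface $G$ of $\Delta$ with $|G|<k$ and enlarge it to a set $G'$ with $|G'|=k-1$. Then $G'\in\Delta_{I_k}$ by the previous step, while for any vertex $x\notin G'$ the set $G'\cup\{x\}$ has size $k$ and contains $G$, so $\prod_{x_i\in G'\cup\{x\}}x_i$ is a squarefree multiple of $\prod_{x_i\in G}x_i\in I$ of degree $k$ and therefore lies in $I_k$; hence $G'\cup\{x\}\notin\Delta_{I_k}$. So $G'$ is a facet of size exactly $k-1$, giving $\mdim\Delta_{I_k}=k-2$, equivalently $\mdim\K[\Delta_{I_k}]=k-1$. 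Now Proposition \ref{PCM CM} shows $\Delta_{I_k}$ is initially Cohen--Macaulay if and only if $(\Delta_{I_k})^{\mdim\Delta_{I_k}}=(\Delta_{I_k})^{k-2}$ is Cohen--Macaulay; by the first step this is the $(k-2)$-skeleton of a simplex, which is shellable, hence Cohen--Macaulay. Therefore $\Delta_{I_k}$ is initially Cohen--Macaulay, so $\dep\K[\Delta_{I_k}]=\mdim\K[\Delta_{I_k}]=k-1$; since $\dep R=n$, the Auslander--Buchsbaum formula yields $\pdim\K[\Delta_{I_k}]=n-(k-1)=n-k+1$.

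I expect the delicate point to be the two combinatorial steps: verifying the face structure of $\Delta_{I_k}$ (in particular that the relevant skeleton is precisely a simplex skeleton) and, above all, extracting from $k>\min\deg$ a facet of size exactly $k-1$. This is the only place where the hypothesis is used, and it is what pins $\mdim\Delta_{I_k}$ down to $k-2$ and makes Proposition \ref{PCM CM} applicable.
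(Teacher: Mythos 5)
Your proof is correct, but it runs along a genuinely different track from the paper's. The paper works on the dual side: it identifies $(\Delta_{I_k})^\vee$ with the skeleton $(\Delta^\vee)^{n-k-1}$ via Lemma \ref{Alexander dual}, quotes an external result that Stanley--Reisner ideals of proper skeletons have degree resolutions, concludes via Proposition \ref{PCM degree resolution}, and then gets the projective dimension from the external computation $\bight I_k=n-k+1$ together with Proposition \ref{pdim-bight}. You instead work entirely on the primal side: you observe that every set of size at most $k-1$ is a face of $\Delta_{I_k}$, use the hypothesis $k>\min\{\deg(u)\mid u\in I\}$ to exhibit a facet of size exactly $k-1$ (the step where the hypothesis is genuinely needed, and which you correctly flag as the crux), deduce $\mdim\Delta_{I_k}=k-2$, and then apply Proposition \ref{PCM CM} since $(\Delta_{I_k})^{k-2}$ is the $(k-2)$-skeleton of a simplex, hence shellable and Cohen--Macaulay; Auslander--Buchsbaum finishes. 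Your route is more self-contained --- it avoids Lemma \ref{Alexander dual} and both external citations, and your computation $\mdim\K[\Delta_{I_k}]=k-1$ in effect reproves $\bight I_k=n-k+1$ --- at the cost of invoking the (standard) Cohen--Macaulayness of simplex skeletons rather than the degree-resolution machinery the paper is built around. One shared caveat: both the statement and the two proofs implicitly require $k\le n+1$ (for $k\ge n+2$ one has $I_k=0$ and $\pdim\K[\Delta_{I_k}]=0\ne n-k+1$); you handle $k\le n+1$ explicitly, which is a small improvement in care rather than a defect.
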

\begin{proof}
	Set $i=n-k-1$. As $i<\dim\Delta^\vee$, \cite[Corollary 3.7]{Namiq2025} implies that the ideal $I_{(\Delta^\vee)^i}$ has a degree resolution. By Lemma \ref{Alexander dual},
	$$(\Delta^\vee)^i=(\Delta^\vee)^{n-k-1}=(\Delta_{I_{k}})^\vee.$$
	Therefore, Proposition \ref{PCM degree resolution} shows that $\Delta_{I_k}$ is initially Cohen--Macaulay. Furthermore, \cite[Proposition 2.2]{AhmedFrobergNamiq2023} gives $\bight I_{k}=n-k+1$. Since $\pdim\K[\Delta_{I_k}]=\bight I_k$ for initially Cohen--Macaulay complexes by Proposition \ref{pdim-bight}, the result follows.
\end{proof}

\begin{proposition}
	For a squarefree monomial ideal $I$, the following inequality holds:
	$$\dim\K[\Delta]-\dep\K[\Delta]\ge\bight I-\hte I.$$
	Equality holds if and only if $\Delta$ is initially Cohen--Macaulay.
\end{proposition}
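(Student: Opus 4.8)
The plan is to reduce everything to the identities $\hte I=\mdim\K[\Delta]$, $\bight I=n-\hte I^{\vee}$ (equivalently $n-\coht\K[\Delta]$), and the basic inequalities $\dep\K[\Delta]\le\mdim\K[\Delta]$ (Proposition~\ref{depth-mdim}) and $\dim\K[\Delta]\le n-\hte I$. First I would translate the two sides of the asserted inequality into the standard invariants of $R=\K[x_1,\dots,x_n]$: since $\K[\Delta]$ is a quotient of a polynomial ring by a squarefree monomial ideal, one has $\dim\K[\Delta]=\coht\K[\Delta]=n-\hte I$ and $\mdim\K[\Delta]=\hte I$ (the associated primes of $\K[\Delta]$ are exactly the minimal primes $P_F$, with $\hte P_F=n-|F|$ over facets $F$ and $\coht P_F=|F|$, so the infimum of coheights is the minimum facet size, which is $\hte I$). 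Hence $\bight I-\hte I=\bight I-\mdim\K[\Delta]$ and, using $\bight I=n-\hte I^{\vee}=\pdim$ of the Alexander dual's quotient via the Terai formula cited in Proposition~\ref{PCM degree resolution}, the right-hand side rearranges cleanly.

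Next I would rewrite the left-hand side. By the Auslander--Buchsbaum formula over the Cohen--Macaulay (indeed regular) ring $R$, $\pdim\K[\Delta]=n-\dep\K[\Delta]$, so
\[
\dim\K[\Delta]-\dep\K[\Delta]=(n-\hte I)-(n-\pdim\K[\Delta])=\pdim\K[\Delta]-\hte I.
\]
Thus the claimed inequality is equivalent to $\pdim\K[\Delta]\ge\bight I$, which is precisely Proposition~\ref{pdim-bight} applied to $N=\K[\Delta]$ over the Cohen--Macaulay local ring $R_M$ (localizing at the graded maximal ideal preserves $\pdim$, $\dep$, $\hte$, and $\bight$ for this graded module). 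Proposition~\ref{pdim-bight} also records that equality $\pdim\K[\Delta]=\bight I$ holds if and only if $\K[\Delta]$ is initially Cohen--Macaulay, i.e.\ $\Delta$ is initially Cohen--Macaulay; transporting this equivalence back through the two substitutions above gives that equality in $\dim\K[\Delta]-\dep\K[\Delta]\ge\bight I-\hte I$ holds exactly when $\Delta$ is initially Cohen--Macaulay.

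The only genuinely delicate point is bookkeeping with the two combinatorial identities $\hte I=\mdim\K[\Delta]$ and $\dim\K[\Delta]=n-\hte I$: both follow from the primary decomposition $I=\bigcap_{F\in\f(\Delta)}P_{X\setminus F}$ of a squarefree monomial ideal, together with $\hte P_{X\setminus F}=n-|F|$ and $\coht P_{X\setminus F}=|F|$, so that $\hte I=\min_F(n-|F|)=n-\dim\Delta-1$ while $\mdim\K[\Delta]=\min_F|F|$ — and since $\min_F|F|=n-\max_F(n-|F|)$ need not equal $n-\dim\Delta-1$, one must be careful: in fact $\mdim\K[\Delta]$ is the \emph{minimum} facet size, which equals $\hte I$ precisely because $\hte I=\min_F\hte P_{X\setminus F}=\min_F(n-|F|)$ corresponds to the \emph{largest} facet, whereas $\mdim=\min|F|$ corresponds to the smallest; so the correct pairing is $\mdim\K[\Delta]=n-\bight I$ and $\dim\K[\Delta]=n-\hte I$. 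With these corrected identities, $\dim\K[\Delta]-\dep\K[\Delta]=\pdim\K[\Delta]-\hte I$ and $\bight I-\hte I=(n-\mdim\K[\Delta])-\hte I$, so the inequality becomes $\pdim\K[\Delta]\ge n-\mdim\K[\Delta]$, i.e.\ $\dep\K[\Delta]\le\mdim\K[\Delta]$, which is Proposition~\ref{depth-mdim}; equality is the initially Cohen--Macaulay condition. I expect verifying this last chain of substitutions — making sure every $\bight$ versus $\hte$ and every $n-(\cdot)$ lands on the right side — to be the main (purely clerical) obstacle, with no conceptual difficulty once Propositions~\ref{depth-mdim} and~\ref{pdim-bight} are in hand.
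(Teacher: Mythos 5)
Your final, corrected chain of identities is exactly the paper's argument: the paper's proof simply cites Corollary~\ref{ht-bight} (which gives $\dim\K[\Delta]=n-\hte I$ and $\mdim\K[\Delta]=n-\bight I$ since the polynomial ring is counmixed) together with Proposition~\ref{depth-mdim}, so the inequality reduces to $\dep\K[\Delta]\le\mdim\K[\Delta]$ with equality precisely in the initially Cohen--Macaulay case. Note only that the identity $\hte I=\mdim\K[\Delta]$ asserted in your first paragraph is false in general (you correctly repair it to $\mdim\K[\Delta]=n-\bight I$ at the end), and that your alternative detour through Auslander--Buchsbaum and Proposition~\ref{pdim-bight} is also valid but amounts to the same computation.
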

\begin{proof}
	This is a direct consequence of Corollary \ref{ht-bight} and Proposition \ref{depth-mdim}.
\end{proof}

\begin{proposition}
	A simplicial complex $\Delta$ is initially Cohen--Macaulay if and only if $\K[\Delta^{\mdim \Delta}]$ has a unique extremal Betti number and
	$$\reg \K[\Delta^{\mdim \Delta}]=\deg h_{\K[\Delta^{\mdim \Delta}]}(t).$$
\end{proposition}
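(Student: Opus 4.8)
The plan is to reduce the claim to Proposition \ref{PCM CM} together with standard facts about extremal Betti numbers. Write $b = \mdim\Delta$ and $\Gamma = \Delta^{b}$ for the initial-dimension skeleton, so that $\K[\Gamma]$ is a quotient of $R$ by a squarefree monomial ideal. First I would recall that $\Delta$ is initially Cohen--Macaulay if and only if $\Gamma$ is Cohen--Macaulay, by Proposition \ref{PCM CM}. So the entire task is to show that $\K[\Gamma]$ is Cohen--Macaulay if and only if it has a unique extremal Betti number located on the ``$h$-polynomial line,'' i.e.\ $\reg\K[\Gamma] = \deg h_{\K[\Gamma]}(t)$.

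The forward direction is the easy half. If $\K[\Gamma]$ is Cohen--Macaulay, then $\pdim\K[\Gamma] = n - \dim\K[\Gamma]$ and the Betti table is concentrated in the last row in the sense that the unique extremal Betti number is $\beta_{p,p+r}$ with $p = \pdim\K[\Gamma]$ and $p+r$ the top degree; moreover for a Cohen--Macaulay Stanley--Reisner ring the regularity equals the degree of the $h$-polynomial, since the Hilbert series is $h_{\K[\Gamma]}(t)/(1-t)^{\dim\K[\Gamma]}$ with $h_{\K[\Gamma]}(t)$ having nonnegative coefficients and top degree $\reg\K[\Gamma]$ (this is the standard description of the Hilbert series of a Cohen--Macaulay graded ring via a maximal regular sequence, reducing to an Artinian quotient whose socle degrees control regularity). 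Uniqueness of the extremal Betti number follows because in a Cohen--Macaulay module all nonzero entries in the last nonvanishing homological degree contribute to the socle of the Artinian reduction, and the unique top socle degree is $\reg$; I would cite the characterization of Cohen--Macaulayness by a single extremal Betti number (Bayer--Charalambous--Popescu) for this.

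For the converse, suppose $\K[\Gamma]$ has a unique extremal Betti number $\beta_{i,j}$ and $\reg\K[\Gamma] = \deg h_{\K[\Gamma]}(t)$. By the Bayer--Charalambous--Popescu correspondence between extremal Betti numbers and the ``corners'' of the Betti table, a single extremal Betti number forces $i = \pdim\K[\Gamma]$ and $j - i = \reg\K[\Gamma]$, so the Betti table has a single corner at the bottom-right. Then I would compare two expressions for the Hilbert series: on one hand it is $\sum_\ell (-1)^\ell \sum_j \beta_{\ell,j} t^j / (1-t)^n = K_{\K[\Gamma]}(t)/(1-t)^n$ with $K_{\K[\Gamma]}$ the $K$-polynomial; on the other it equals $h_{\K[\Gamma]}(t)/(1-t)^{\dim\K[\Gamma]}$. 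Hence $(1-t)^{\,n-\dim\K[\Gamma]}$ divides $K_{\K[\Gamma]}(t)$, and the key point is that $\pdim\K[\Gamma] \ge n - \dim\K[\Gamma]$ always (Auslander--Buchsbaum, since $\dep\K[\Gamma]\le\dim\K[\Gamma]$), with equality exactly when Cohen--Macaulay. The single-corner hypothesis pins $\deg K_{\K[\Gamma]}(t) = \pdim\K[\Gamma] + (\text{lowest degree contribution})$ in a way that, combined with $\reg\K[\Gamma] = \deg h_{\K[\Gamma]}(t)$ and the degree bookkeeping $\deg K_{\K[\Gamma]} = \deg h_{\K[\Gamma]} + (n - \dim\K[\Gamma])$ forced by the division above, yields $\pdim\K[\Gamma] = n - \dim\K[\Gamma]$; by Auslander--Buchsbaum this is exactly $\dep\K[\Gamma] = \dim\K[\Gamma]$, i.e.\ $\K[\Gamma]$ is Cohen--Macaulay, and Proposition \ref{PCM CM} finishes.

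The main obstacle I anticipate is the converse degree bookkeeping: translating ``unique extremal Betti number $+$ regularity equals $\deg h$'' into the numerical identity $\pdim = n - \dim$ cleanly, rather than merely $\pdim \ge n-\dim$. The subtlety is that a unique extremal Betti number does not by itself force the $K$-polynomial to have the ``correct'' low-degree behaviour; one genuinely needs the hypothesis $\reg = \deg h$ to rule out cancellation that would let the corner sit higher than the pure Cohen--Macaulay position. I would handle this by working with the fact that the extremal Betti number $\beta_{i,j}$ equals the corresponding entry $\dim_\K \Tor_i(\K[\Gamma],\K)_j$ and that $j - i = \reg$, $i = \pdim$ by the single-corner property, so $j = \pdim + \reg = \pdim + \deg h$; since $j \le \deg K_{\K[\Gamma]} = \deg h + (n - \dim)$ with equality forced by the extremality (no higher-degree term in $K$ can survive past the corner), we get $\pdim \le n - \dim$, and with the reverse inequality from Auslander--Buchsbaum the equality follows.
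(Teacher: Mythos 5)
Your proposal is correct and follows essentially the same route as the paper: reduce to Proposition \ref{PCM CM}, note that Cohen--Macaulayness gives a unique extremal Betti number, and exploit the numerical identity $\pdim\K[\Gamma]+\reg\K[\Gamma]=\deg h_{\K[\Gamma]}(t)+(n-\dim\K[\Gamma])$ together with Auslander--Buchsbaum (equivalently, the criterion $\pdim=\hte$ of Corollary \ref{pdim-hte}). The only difference is cosmetic: you rederive that identity from the $K$-polynomial and the single-corner property, whereas the paper cites it directly.
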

\begin{proof}
	If $\Delta$ is initially Cohen--Macaulay, Proposition \ref{PCM CM} states that $\Delta^{\mdim\Delta}$ is Cohen--Macaulay. By \cite[Corollary B.4.1]{Vasconcelos1998}, this implies $\K[\Delta^{\mdim\Delta}]$ has a unique extremal Betti number. From \cite[Proposition 2.5.2]{Namiq2023}, we have the relation
	$$\pdim\K[\Delta^{\mdim\Delta}]+\reg\K[\Delta^{\mdim\Delta}]=\deg h_{\K[\Delta^{\mdim\Delta}]}(t)+\hte I_{\Delta^{\mdim\Delta}}$$
	The result then follows from applying Corollary \ref{pdim-hte} since $\K[\Delta^{\mdim\Delta}]$ is Cohen--Macaulay.
\end{proof}

The following well-known lemma provides the topological interpretation necessary to generalize classical Cohen--Macaulay criteria.

\begin{lemma}\label{depth Hochster}
	Let $\Delta$ be a simplicial complex. The depth of its Stanley--Reisner ring is
	$$\dep\K[\Delta]=\min\{|F|+i+1\mid F\in\Delta,\ \widetilde{H}_i(\link_\Delta(F);\K)\ne0\}.$$
\end{lemma}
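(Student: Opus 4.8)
The plan is to pass from depth to the vanishing of local cohomology and then invoke Hochster's formula. Write $M=(x_1,\dots,x_n)$ for the graded maximal ideal of $R$ and $H^i_M(-)$ for the $i$-th local cohomology functor. Since $\K[\Delta]$ is a nonzero finitely generated graded $R$-module, Grothendieck's vanishing and non-vanishing theorems give
$$\dep\K[\Delta]=\min\{i\mid H^i_M(\K[\Delta])\ne0\},$$
so it suffices to detect the least $i$ with $H^i_M(\K[\Delta])\ne 0$ combinatorially.

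For that I would use Hochster's formula for the $\mathbb{Z}^n$-graded local cohomology of a Stanley--Reisner ring (see \cite{BrunsHerzog1998} or \cite{Stanley1996}), which expresses each graded piece of $H^i_M(\K[\Delta])$ as a direct sum of reduced simplicial homology groups $\widetilde{H}_\bullet(\link_\Delta(F);\K)$ of links of faces $F\in\Delta$, with the homological degree shifted down by $|F|+1$ from the cohomological degree. The only consequence needed is the resulting non-vanishing criterion: $H^i_M(\K[\Delta])\ne0$ if and only if $\widetilde{H}_{i-|F|-1}(\link_\Delta(F);\K)\ne0$ for some $F\in\Delta$. Substituting this into the previous display yields
$$\dep\K[\Delta]=\min\{i\mid\widetilde{H}_{i-|F|-1}(\link_\Delta(F);\K)\ne0\text{ for some }F\in\Delta\},$$
and the change of index $j=i-|F|-1$ rewrites the right-hand side as $\min\{|F|+j+1\mid F\in\Delta,\ \widetilde{H}_j(\link_\Delta(F);\K)\ne0\}$, which is exactly the claimed formula.

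There is no genuine obstacle here: the statement is classical, and what is written above is just the standard route through local cohomology. The points that merit care are bookkeeping ones, namely keeping the index shift in Hochster's formula consistent and checking the boundary cases. When $F=\emptyset$ one has $\link_\Delta(\emptyset)=\Delta$, recovering the contribution $\widetilde{H}_{i-1}(\Delta;\K)$; when $F$ is a facet, $\link_\Delta(F)=\{\emptyset\}$ with $\widetilde{H}_{-1}(\{\emptyset\};\K)=\K$, which in particular shows the index set on the right-hand side is nonempty — a facet $F$ with $|F|=\dim\K[\Delta]$ contributes the value $\dim\K[\Delta]$ — so the minimum is well-defined and finite, as it must be. Alternatively, one could simply quote the formula from a standard reference such as \cite{BrunsHerzog1998} or \cite{Stanley1996}.
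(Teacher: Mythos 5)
Your proof is correct and follows exactly the route the paper takes: Grothendieck's characterization of depth as the least non-vanishing local cohomology degree, combined with Hochster's formula to translate that non-vanishing into reduced homology of links. The extra bookkeeping on the index shift and the boundary cases ($F=\emptyset$ and $F$ a facet) is accurate and only makes the argument more complete than the paper's one-line citation.
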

\begin{proof}
	This result follows from Grothendieck’s characterization of depth via local cohomology \cite[Corollary 3.10]{Grothendieck1967}, combined with Hochster’s formula \cite[Theorem 4.1]{Stanley1996} which expresses local cohomology modules using the reduced homology of links.
\end{proof}

\begin{proposition}[Generalized Reisner’s Criterion]\label{pro 1}
	A simplicial complex $\Delta$ is initially Cohen--Macaulay if and only if for every face $F \in \Delta$,
	$$\widetilde{H}_i(\link_\Delta(F); \K)=0\quad\text{for all }i<\mdim\Delta-|F|.$$
\end{proposition}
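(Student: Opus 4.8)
The plan is to reduce the statement to a characterization of the depth of $\K[\Delta]$ together with the skeleton criterion of Proposition~\ref{PCM CM}, using the homological description of depth in Lemma~\ref{depth Hochster}. Write $b=\mdim\Delta$ and recall that $\mdim\K[\Delta]=b+1$. By Proposition~\ref{depth-mdim}, $\Delta$ is initially Cohen--Macaulay if and only if $\dep\K[\Delta]=\mdim\K[\Delta]=b+1$, and by Lemma~\ref{depth Hochster} this depth is the minimum over all faces $F\in\Delta$ and all $i$ with $\widetilde H_i(\link_\Delta(F);\K)\ne0$ of the quantity $|F|+i+1$. Thus $\dep\K[\Delta]\ge b+1$ is equivalent to: whenever $\widetilde H_i(\link_\Delta(F);\K)\ne0$, we have $|F|+i+1\ge b+1$, i.e. $i\ge b-|F|$. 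Equivalently, $\widetilde H_i(\link_\Delta(F);\K)=0$ for all $i<b-|F|=\mdim\Delta-|F|$. So the only thing to check is that the inequality $\dep\K[\Delta]\le b+1$ always holds, so that the condition ``$\dep\K[\Delta]\ge b+1$'' is the same as ``$\dep\K[\Delta]=b+1$'', which is the initially Cohen--Macaulay property.

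First I would establish this upper bound $\dep\K[\Delta]\le b+1$ unconditionally; this is exactly the content of Proposition~\ref{depth-mdim} specialized to $N=\K[\Delta]$, since $\dep N\le\mdim N=b+1$. So no separate argument is needed. Then I would simply run the chain of equivalences above: $\Delta$ is initially Cohen--Macaulay $\iff$ $\dep\K[\Delta]=b+1$ $\iff$ $\dep\K[\Delta]\ge b+1$ (using the unconditional upper bound) $\iff$ for every face $F$ and every $i$, $\widetilde H_i(\link_\Delta(F);\K)\ne0$ implies $|F|+i+1\ge b+1$ (using Lemma~\ref{depth Hochster}, where the ``$\min$'' being $\ge b+1$ means every term is $\ge b+1$) $\iff$ $\widetilde H_i(\link_\Delta(F);\K)=0$ for all $i<\mdim\Delta-|F|$. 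One should note the edge case: if $\link_\Delta(F)$ has no nonvanishing reduced homology at all for any $F$, the minimum in Lemma~\ref{depth Hochster} is taken over an empty set; but this cannot happen for $F=\emptyset$ since $\widetilde H_{-1}(\{\emptyset\};\K)\ne0$ when $\Delta$ consists only of the empty face, and more generally the minimum is always attained because $\dep\K[\Delta]$ is finite --- alternatively one simply interprets the formula as given.

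Alternatively, and perhaps more in the spirit of the preceding propositions, I could deduce the criterion from Proposition~\ref{PCM CM} plus the classical Reisner criterion applied to the skeleton $\Delta^{b}$: $\Delta$ is initially Cohen--Macaulay $\iff$ $\Delta^{b}$ is Cohen--Macaulay $\iff$ (Reisner) for every face $F\in\Delta^{b}$, $\widetilde H_i(\link_{\Delta^b}(F);\K)=0$ for $i<\dim\Delta^b-|F|=b-|F|$. One then has to translate between links in $\Delta^{b}$ and links in $\Delta$: for $|F|\le b$ one checks that $\link_{\Delta^b}(F)=(\link_\Delta(F))^{\,b-|F|}$, the $(b-|F|)$-skeleton of the link in $\Delta$, and that for a complex $\Gamma$ one has $\widetilde H_i(\Gamma^{\,m};\K)=\widetilde H_i(\Gamma;\K)$ for $i<m$ (skeleta do not change homology below the truncation dimension). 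Faces $F$ with $|F|>b$ contribute no constraint since then $b-|F|<0$. The main obstacle in this second route is bookkeeping the skeleton-of-a-link identity and the homology-of-a-skeleton fact cleanly; the first route via Lemma~\ref{depth Hochster} is shorter and avoids this, so I would present the first route, remarking that it refines the classical Reisner criterion exactly as Proposition~\ref{PCM CM} refines the Cohen--Macaulay case.
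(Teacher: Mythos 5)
Your first route is exactly the paper's proof: both combine Proposition~\ref{depth-mdim} with the depth formula of Lemma~\ref{depth Hochster} and observe that the minimum equals $\mdim\K[\Delta]=\mdim\Delta+1$ precisely when every term $|F|+i+1$ with nonvanishing homology is at least that value. Your explicit remark that the unconditional upper bound $\dep\K[\Delta]\le\mdim\K[\Delta]$ turns the inequality into an equality is a correct (and slightly more careful) articulation of the same step the paper leaves implicit.
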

\begin{proof}
	The equality $\dep \K[\Delta]=\mdim \K[\Delta]$ holds if and only if the minimum value provided by Lemma \ref{depth Hochster} equals the initial dimension by Proposition \ref{depth-mdim}. This means
	$$\mdim \K[\Delta]=\min\left\{|F|+i+1\mid F\in\Delta,\ \widetilde{H}_i(\link_\Delta(F);\K)\neq 0\,\right\}.$$
	This condition is met if and only if $\widetilde{H}_i(\link_\Delta(F);\K)=0$ for all faces $F$ and all integers $i$ such that
	$$|F|+i+1<\mdim\K[\Delta].$$
	Since $\mdim\K[\Delta]=\mdim\Delta+1$, the inequality is equivalent to $i<\mdim\Delta-|F|$. Therefore, the vanishing condition becomes:
	$$\widetilde{H}_i(\link_\Delta(F);\K)=0\quad\text{for all }F\in\Delta\text{ and all } i<\mdim \Delta-|F|.$$
	Thus, $\Delta$ is initially Cohen--Macaulay if and only if the reduced homology of $\link_\Delta(F)$ vanishes for all $i<\mdim \Delta-|F|$, for every face $F\in\Delta$.
\end{proof}

\begin{corollary}
	A simplicial complex $\Delta$ is initially Cohen--Macaulay if and only if $\widetilde{H}_i(\link_\Delta(F); \K)=0$ for all faces $F\in\Delta$ with $|F|\le\mdim\Delta$ and for all $i<\mdim\Delta-|F|$.
\end{corollary}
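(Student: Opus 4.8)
The plan is to obtain this as an immediate consequence of the Generalized Reisner's Criterion (Proposition \ref{pro 1}), the only content being an observation about which faces actually contribute a nontrivial homological constraint.

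First I would restate Proposition \ref{pro 1}: $\Delta$ is initially Cohen--Macaulay if and only if $\widetilde{H}_i(\link_\Delta(F);\K)=0$ for \emph{every} face $F\in\Delta$ and every integer $i<\mdim\Delta-|F|$. The one-directional implication "$\Rightarrow$" of the corollary is then immediate, since its hypothesis is a sub-family of the conditions in Proposition \ref{pro 1} (those indexed by faces with $|F|\le\mdim\Delta$). For the converse I would argue that the remaining faces impose no new condition: if $F\in\Delta$ has $|F|>\mdim\Delta$, then $\mdim\Delta-|F|\le -1$, so any integer $i$ with $i<\mdim\Delta-|F|$ satisfies $i\le -2$; since reduced simplicial homology $\widetilde{H}_i(-;\K)$ vanishes identically in every degree $i\le -2$, the equality $\widetilde{H}_i(\link_\Delta(F);\K)=0$ holds automatically for all such $F$ and $i$. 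Hence the full family of vanishing conditions of Proposition \ref{pro 1} is equivalent to its sub-family with the extra restriction $|F|\le\mdim\Delta$, and applying Proposition \ref{pro 1} finishes the proof.

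The only points requiring a little care are bookkeeping ones, and I do not expect a genuine obstacle. The boundary case $|F|=\mdim\Delta$ must be retained, as there the condition reads $\widetilde{H}_{-1}(\link_\Delta(F);\K)=0$, which is a real constraint (it forces $\link_\Delta(F)\neq\{\emptyset\}$); only the strictly larger faces $|F|>\mdim\Delta$ drop out. One should also note the argument is unaffected when $\mdim\Delta<0$ (the degenerate situation $\mdim\K[\Delta]=0$): then no face satisfies $|F|\le\mdim\Delta$, the corollary's hypothesis is vacuous, and correspondingly every condition in Proposition \ref{pro 1} is of the vacuous type just discussed, so both sides agree with the already-known fact that such $\Delta$ is initially Cohen--Macaulay.
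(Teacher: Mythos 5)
Your proposal is correct and follows essentially the same route as the paper: the forward direction is immediate from Proposition \ref{pro 1}, and the converse reduces to observing that for $|F|>\mdim\Delta$ the required vanishing $\widetilde{H}_i(\link_\Delta(F);\K)=0$ only involves degrees $i\le -2$, where reduced homology vanishes automatically. Your extra remarks on the boundary case $|F|=\mdim\Delta$ and the degenerate case $\mdim\Delta<0$ are accurate but not needed beyond the paper's argument.
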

\begin{proof}
	The forward direction is trivial by Proposition \ref{pro 1}.
	
	Conversely, assume the vanishing condition holds for all $F$ with $|F|\le\mdim\Delta$. The condition is trivially satisfied if $|F|>\mdim \Delta$, because the range $i<\mdim\Delta-|F|$ is empty (as $i \ge -1$). Thus, the condition holds for all faces $F\in\Delta$, and by Proposition \ref{pro 1}, $\Delta$ is initially Cohen--Macaulay.
\end{proof}

\begin{corollary}
	A simplicial complex $\Delta$ is initially Cohen--Macaulay if and only if $\link_\Delta(F)$ is initially Cohen--Macaulay for every face $F$ contained in a minimum facet of $\Delta$.
\end{corollary}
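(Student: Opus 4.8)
The plan is to reduce the statement to Proposition \ref{pro 1} (the Generalized Reisner's Criterion) by showing that the two homology-vanishing conditions are equivalent. Write $b=\mdim\Delta$. By Proposition \ref{pro 1}, $\Delta$ is initially Cohen--Macaulay if and only if $\widetilde{H}_i(\link_\Delta(F);\K)=0$ for every face $F\in\Delta$ and all $i<b-|F|$. On the other hand, for a fixed face $G$ contained in a minimum facet $\sigma$ of $\Delta$, applying Proposition \ref{pro 1} to $\link_\Delta(G)$ — whose initial dimension I must first identify — gives that $\link_\Delta(G)$ is initially Cohen--Macaulay if and only if $\widetilde{H}_i(\link_{\link_\Delta(G)}(H);\K)=0$ for all faces $H\in\link_\Delta(G)$ and all $i<\mdim(\link_\Delta(G))-|H|$. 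Using the standard identity $\link_{\link_\Delta(G)}(H)=\link_\Delta(G\cup H)$ for disjoint $G,H$ with $G\cup H\in\Delta$, these inner links are exactly links of faces of $\Delta$ that contain $G$, so the whole statement becomes a bookkeeping exercise about which faces $F$ get covered as $G$ ranges over all faces of minimum facets and $H$ ranges over $\link_\Delta(G)$.

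The key computation is the initial dimension of $\link_\Delta(G)$ when $G\subseteq\sigma$ and $\sigma$ is a minimum facet, i.e. $|\sigma|=b+1=\mdim\Delta+1$. Since $\sigma$ is a facet of $\Delta$ of minimum size, $\sigma\setminus G$ is a facet of $\link_\Delta(G)$; I claim it is a minimum facet, so that $\mdim(\link_\Delta(G))=|\sigma\setminus G|-1=b-|G|$. Indeed, any facet $\tau$ of $\link_\Delta(G)$ satisfies $\tau\cup G\in\Delta$ with $\tau\cap G=\emptyset$, and $\tau\cup G$ extends to a facet $\rho$ of $\Delta$ with $|\rho|\ge b+1$; then $\rho\setminus G\subseteq\link_\Delta(G)$ forces $|\tau|\ge|\rho\setminus G|\ge b+1-|G|$ — wait, this only bounds facets through $G$, so I should instead argue directly that the minimal coheight among $\ass\K[\link_\Delta(G)]$ is realized by $\sigma\setminus G$, equivalently that no facet of $\link_\Delta(G)$ has size smaller than $|\sigma|-|G|$. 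This follows because a facet $\tau$ of $\link_\Delta(G)$ with $\tau\cap G=\emptyset$ satisfies $\tau\cup G\in\Delta$; picking a facet $\rho\supseteq\tau\cup G$ of $\Delta$, minimality of $\sigma$ gives $|\rho|\ge|\sigma|$, but $\tau$ need not equal $\rho\setminus G$. The clean fix: $\tau$ itself need not sit inside such a $\rho$ minus $G$. So instead I will verify it via Stanley--Reisner dimensions — $\dim\K[\link_\Delta(G)]$ relates to faces of $\Delta$ containing $G$, and the minimum coheight of an associated prime equals $n'-\bight I_{\link_\Delta(G)}$; the minimum facet $\sigma\setminus G$ shows $\mdim(\link_\Delta(G))\le b-|G|$, and the reverse inequality $\mdim(\link_\Delta(G))\ge b-|G|$ holds because every facet $\tau$ of $\link_\Delta(G)$ with $\tau\cap G=\emptyset$ has $|\tau\cup G|\ge\min\{|F|:F\text{ facet of }\Delta\}=b+1$ only when $\tau\cup G$ is itself a facet, which it is since $\tau$ is maximal in $\link_\Delta(G)$ and $G$ is fixed — hence $|\tau|\ge b+1-|G|$, giving $\mdim(\link_\Delta(G))=b-|G|$.

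With $\mdim(\link_\Delta(G))=b-|G|$ established, the equivalence is immediate: the condition ``$\link_\Delta(G)$ is initially Cohen--Macaulay for every $G$ contained in a minimum facet'' unwinds, via Proposition \ref{pro 1} and $\link_{\link_\Delta(G)}(H)=\link_\Delta(G\cup H)$, to ``$\widetilde{H}_i(\link_\Delta(G\cup H);\K)=0$ for all such $G$, all $H\in\link_\Delta(G)$, and all $i<(b-|G|)-|H|=b-|G\cup H|$''. Every face $F\in\Delta$ contains the empty face, and $\emptyset$ is contained in every minimum facet, so taking $G=\emptyset$ and $H=F$ already recovers the full condition of Proposition \ref{pro 1}; conversely the specialized condition is a sub-collection of the Proposition \ref{pro 1} conditions, hence implied by $\Delta$ being initially Cohen--Macaulay. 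This gives both implications and completes the proof. The main obstacle is pinning down $\mdim(\link_\Delta(G))=b-|G|$ cleanly — once that is in hand, everything else is formal manipulation of links.
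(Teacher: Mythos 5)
Your proof is correct and follows essentially the same route as the paper: apply the generalized Reisner criterion (Proposition \ref{pro 1}) together with the identity $\link_{\link_\Delta(F)}(F')=\link_\Delta(F\cup F')$ and the fact that $\mdim\link_\Delta(F)=\mdim\Delta-|F|$ for $F$ contained in a minimum facet, with the converse handled by taking $F=\emptyset$. The only difference is that you supply a justification of the initial-dimension identity, which the paper asserts without proof; your final version of that argument (the facets of $\link_\Delta(F)$ are exactly $\rho\setminus F$ for facets $\rho\supseteq F$ of $\Delta$, so all have size at least $\mdim\Delta+1-|F|$ with equality attained by $\sigma\setminus F$) is the right one, though the intermediate false starts should be pruned from a final write-up.
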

\begin{proof}
	First, suppose $\Delta$ is initially Cohen--Macaulay. Let $F$ be a face in a minimum facet of $\Delta$, and let $F'\in\link_\Delta(F)$. By Proposition \ref{pro 1},
	$$\widetilde{H}_i(\link_\Delta(F\cup F');\K)=0\quad\text{for all }i<\mdim\Delta-|F\cup F'|.$$
	Using the identity $\link_{\link_\Delta(F)}(F') = \link_\Delta(F\cup F')$ and the fact that $\mdim\link_\Delta(F)=\mdim\Delta-|F|$, the condition becomes
	$$\widetilde{H}_i(\link_{\link_\Delta(F)}(F');\K)=0\quad \text{for all } i<\mdim\link_\Delta(F)-|F'|.$$
	By Proposition \ref{pro 1}, this implies $\link_\Delta(F)$ is initially Cohen--Macaulay.
	
	Conversely, assume $\link_\Delta(F)$ is initially Cohen--Macaulay for all $F$ contained in a minimum facet. Taking the trivial face $F=\emptyset$ (which is in every facet), we have $\link_\Delta(\emptyset)=\Delta$. By hypothesis, $\Delta$ must be initially Cohen--Macaulay.
\end{proof}

A disconnected complex cannot be initially Cohen--Macaulay unless it has a 0-dimensional facet. Connectedness is therefore crucial, motivating the following definitions.

\begin{definition}
	Let $\Delta$ be a simplicial complex. We say that $\Delta$ is
	\begin{itemize}
		\item \emph{strongly connected} if for any two facets $F,F'$, there exists a sequence of facets $F=F_0,F_1,\dots,F_t=F'$ such that $|F_i\cap F_{i+1}|=\dim\Delta$ for all $i=0,\dots,t-1$ (see, e.g., \cite[Definition 11.6]{Bjorner1995}).
		
		\item \emph{weakly connected} if its initial-dimension skeleton $\Delta^{\mdim\Delta}$ is strongly connected.
		
		\item \emph{stably connected} if the pure skeleton $\Delta^{[i]}$ is strongly connected for every $-1\le i\le\dim\Delta$.
	\end{itemize}
\end{definition}

\begin{lemma}\label{PCM weakly connected}
	Let $\Delta$ be a simplicial complex.
	\begin{enumerate}
		\item If $\Delta$ is initially Cohen--Macaulay, then it is weakly connected.
		\item If $\Delta$ is sequentially Cohen--Macaulay, then it is stably connected.
	\end{enumerate}
\end{lemma}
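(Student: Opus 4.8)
The plan is to reduce both parts to the corresponding Cohen--Macaulay statements about skeletons, using the characterizations already established. For part (1), I would invoke Proposition \ref{PCM CM}: if $\Delta$ is initially Cohen--Macaulay, then its initial-dimension skeleton $\Delta^{\mdim\Delta}$ is Cohen--Macaulay. A Cohen--Macaulay complex is pure, and it is a classical fact (connectedness in codimension one for Cohen--Macaulay complexes, see \cite[Theorem 11.12]{Bjorner1995} or the standard argument via Reisner's criterion applied to links of codimension-two faces) that a Cohen--Macaulay complex is strongly connected. Applying this to $\Delta^{\mdim\Delta}$ gives exactly that $\Delta^{\mdim\Delta}$ is strongly connected, which by definition means $\Delta$ is weakly connected.

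For part (2), the argument is parallel but uses Proposition \ref{SCM}: if $\Delta$ is sequentially Cohen--Macaulay, then the pure skeleton $\Delta^{[i]}$ is Cohen--Macaulay for all $\mdim\Delta\le i\le\dim\Delta$. As in part (1), each such $\Delta^{[i]}$, being Cohen--Macaulay, is strongly connected. For the range $-1\le i<\mdim\Delta$, the pure skeleton $\Delta^{[i]}$ is either empty or a single point or otherwise low-dimensional; I would note that $\Delta^{[i]}$ for $i<\mdim\Delta=\mdim\Delta$ — here one should observe that a sequentially Cohen--Macaulay complex has $\mdim\Delta$ equal to the minimum facet dimension plus one, so $\Delta^{[i]}$ for $i$ below this range consists of faces all of dimension $\le i<\mdim\Delta$, and in fact $\Delta^{[i]}$ is generated by $i$-faces that lie inside facets of dimension $\ge\mdim\Delta$; such a pure skeleton is Cohen--Macaulay by Remark \ref{9} (descending from $\Delta^{\mdim\Delta}$ being Cohen--Macaulay, combined with the pure-skeleton version), hence strongly connected. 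Thus $\Delta^{[i]}$ is strongly connected for every $-1\le i\le\dim\Delta$, which is the definition of stably connected.

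The main obstacle is handling part (2) cleanly in the low range $i<\mdim\Delta$, since the definition of stably connected quantifies over all $-1\le i\le\dim\Delta$, not just the sequentially Cohen--Macaulay range. The cleanest fix is to recall that Remark \ref{9} (together with \cite[Corollary 2.6]{AhmedFrobergNamiq2023}) propagates the Cohen--Macaulay property of a skeleton downward: since $\Delta^{\mdim\Delta}$ is Cohen--Macaulay and pure, every lower pure skeleton $\Delta^{[i]}$ with $i<\mdim\Delta$ is Cohen--Macaulay as well, hence strongly connected. One should also dispose of the trivial cases $i=-1$ (the void or irrelevant complex) and $i=0$ (a set of points, strongly connected iff it is a single point, which holds when $\mdim\Delta\ge1$) separately — these are immediate from the definitions. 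Assembling these observations yields the two implications.
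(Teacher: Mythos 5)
Your proof is correct and follows essentially the same route as the paper, which derives (1) from Proposition \ref{PCM CM} and (2) from Proposition \ref{SCM}, in both cases combined with the classical fact that Cohen--Macaulay complexes are strongly connected \cite[Proposition 11.7]{Bjorner1995}. Your extra care with the range $-1\le i<\mdim\Delta$ in part (2) — noting that $\Delta^{[i]}=\Delta^i$ there and that Remark \ref{9} propagates Cohen--Macaulayness downward from $\Delta^{\mdim\Delta}$ — fills in a detail the paper's one-line proof leaves implicit (your parenthetical claiming $\mdim\Delta$ is the minimum facet dimension \emph{plus one} is off by one, but this does not affect the argument).
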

\begin{proof}
	(1) This follows from Proposition \ref{PCM CM} and the fact that Cohen--Macaulay complexes are strongly connected \cite[Proposition 11.7]{Bjorner1995}.
	
	(2) This similarly follows from Proposition \ref{SCM} and \cite[Proposition 11.7]{Bjorner1995}.
\end{proof}

The converses of these implications are not true in general. For example, the independence complex of the cycle graph $C_7$ is weakly connected, but Proposition \ref{PCM Cn} shows it is not initially Cohen--Macaulay. However, the converse does hold for co-chordal graphs (see Proposition \ref{PCM co-chordal} and Corollary \ref{SCM stably connected}) and for complexes with $\mdim\Delta\le1$.

\begin{proposition}\label{mdim=1}
	Let $\Delta$ be a simplicial complex with $\mdim\Delta\le1$. Then $\Delta$ is initially Cohen--Macaulay if and only if it is weakly connected.
\end{proposition}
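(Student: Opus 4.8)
The plan is to use Proposition \ref{PCM CM}, which reduces the claim to showing that the skeleton $\Delta^{\mdim\Delta}$ is Cohen--Macaulay if and only if it is strongly connected (which, by definition, is exactly weak connectivity of $\Delta$). Since $\mdim\Delta\le1$, the skeleton $\Delta^{\mdim\Delta}$ is at most one-dimensional, i.e.\ a (possibly disconnected) graph viewed as a simplicial complex, together possibly with the correct vertex set. I would first dispose of the case $\mdim\Delta=0$: here $\Delta^0$ is a set of vertices, $\K[\Delta^0]$ is a product of fields in the relevant sense, it has depth $0$ and dimension $0$, hence is Cohen--Macaulay, and a $0$-dimensional complex is vacuously strongly connected (the condition $|F_i\cap F_{i+1}|=\dim\Delta=-1$ means $F_i\cap F_{i+1}=\emptyset$, which holds, but more to the point any two $0$-faces are joined trivially); so both sides of the equivalence hold. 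This case also covers $\mdim\Delta = 0$ with an empty complex, where everything is trivial.

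For the main case $\mdim\Delta=1$, I would invoke Reisner's criterion (or the generalized version, Proposition \ref{pro 1}, applied to the Cohen--Macaulay-ness of the skeleton via Proposition \ref{PCM CM}; equivalently the classical Reisner criterion for the $1$-dimensional complex $\Delta^1$). A $1$-dimensional complex $\Gamma=\Delta^1$ is Cohen--Macaulay over $\K$ if and only if it is connected: the link of a vertex is a $0$-dimensional complex whose reduced $0$-th homology must vanish, which forces no isolated vertices, and the reduced $0$-th homology of $\Gamma$ itself must vanish, which is connectivity; conversely connectivity of a $1$-dimensional complex with no isolated vertices gives depth $2 = \dim$. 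For a $1$-dimensional complex, being connected as a complex is the same as being strongly connected (the overlap condition $|F_i\cap F_{i+1}|=\dim\Delta=1$ just says consecutive edges share a vertex, i.e.\ the usual notion of a connected graph). Hence $\Delta^1$ is Cohen--Macaulay iff it is strongly connected iff $\Delta$ is weakly connected, and combining with Proposition \ref{PCM CM} yields the result.

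The only mild subtlety, and the step I would be most careful about, is the interaction between the ambient vertex set and the skeleton: if $\Delta$ has a vertex not lying on any edge of $\Delta^{\mdim\Delta}$ but $\mdim\Delta = 1$, one must check whether such a vertex is present in $\Delta^{\mdim\Delta}$ and whether it obstructs Cohen--Macaulayness. In fact, if $\mdim\Delta=1$ then every associated prime of $\K[\Delta]$ has coheight $\ge 1$, so $\{x\}\in\Delta$ for every vertex $x$ of the ambient set, and moreover each such vertex must lie in some facet of dimension $\ge1$; thus $\Delta^1$ has no isolated vertices automatically, and the "no isolated vertex" half of the Reisner condition is free. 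I would state this observation explicitly before running the equivalence, so that "connected" and "Cohen--Macaulay" genuinely coincide for $\Delta^1$ here.
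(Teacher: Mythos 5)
Your proposal is correct and takes essentially the same route as the paper: split into the cases $\mdim\Delta=0$ and $\mdim\Delta=1$, apply a Reisner-type criterion (you route it through Proposition \ref{PCM CM} plus the classical criterion on the skeleton, the paper applies Proposition \ref{pro 1} directly to $\Delta$ — a cosmetic difference), and match connectivity plus the absence of isolated vertices with strong connectivity of $\Delta^{\mdim\Delta}$. Two minor misstatements, neither of which affects the argument: a nonempty $0$-dimensional complex has $\dim\K[\Delta^0]=\dep\K[\Delta^0]=1$, not $0$ (it is still Cohen--Macaulay); and $\mdim\Delta=1$ does not force $\{x\}\in\Delta$ for every ambient vertex $x$ (a variable with $x\in I_\Delta$ contributes no coheight-zero associated prime), though the relevant conclusion stands because every vertex that \emph{is} a face of $\Delta$ lies in a facet of dimension at least $1$, so $\Delta^1$ has no isolated vertices.
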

\begin{proof}
	The forward implication is given by Lemma \ref{PCM weakly connected}.
	
	To prove the converse, we analyze the two possible cases for $\mdim\Delta$.
	
	\medskip
	\noindent\textbf{Case 1:} $\mdim\Delta=1$. By Proposition \ref{pro 1}, $\Delta$ is initially Cohen--Macaulay if and only if $\widetilde{H}_i(\link_\Delta(F);\K)=0$ for all $i<1-|F| = -\dim F$. We examine this homological condition based on the dimension of the face $F$:
	
	\begin{enumerate}[\rmfamily (i)]
		\item If $F=\emptyset$ ($\dim F=-1$), the condition is $i<1$. This requires $\widetilde H_i(\Delta;\K)=0$ for $i=-1,0$. $\widetilde H_{-1}(\Delta;\K)=0$ means $\Delta\neq\emptyset$. $\widetilde H_0(\Delta;\K)=0$ means $\Delta$ is connected.
		
		\item If $F$ is a vertex ($\dim F=0$), the condition is $i<0$, so $i=-1$. This requires $\widetilde H_{-1}(\link_\Delta(F);\K)=0$, meaning $\link_\Delta(F)\neq\emptyset$. This is equivalent to $\Delta$ having no isolated vertices (every vertex belongs to a facet of dimension $\ge 1$, which must be 1 since $\mdim\Delta=1$).
		
		\item If $F$ is an edge ($\dim F=1$), the condition is $i<-1$, which is always true as reduced homology vanishes for $i<-1$.
	\end{enumerate}
	Taken together, $\Delta$ is initially Cohen--Macaulay if and only if it is non-empty, connected, and has no isolated vertices. These are precisely the conditions for $\Delta$ (with $\mdim\Delta=1$) to be weakly connected (i.e., $\Delta^1$ is strongly connected).
	
	\medskip
	\noindent\textbf{Case 2:} $\mdim\Delta=0$. If $F=\emptyset$, the condition is $i<0$, so $i=-1$. This requires $\widetilde H_{-1}(\Delta;\K)=0$, which just means $\Delta\neq\emptyset$. For any non-empty face $F$, the condition is $i<-1$, which is vacuous. Thus, $\Delta$ is initially Cohen--Macaulay if and only if $\Delta\neq\emptyset$, which is equivalent to $\Delta^0$ being strongly connected (i.e., weakly connected).
	
	In both cases, the converse holds.
\end{proof}

%%%%%%%%%%%%%%%%%%%%%%%%%%%%%%%%%%%%%%%%%%%%%%%%%%%%%%%%%%%%%%%%%%

\section{Applications in Graph Theory}\label{sec6}

\quad This part of the paper transitions from the general algebraic theory of initially Cohen--Macaulay modules to their specific applications in graph theory, focusing on the properties of independence complexes and edge ideals.

\subsection{The Initially Cohen--Macaulay Property of Graphs}

While many classical results for sequentially Cohen--Macaulay and Cohen--Macaulay complexes extend to the initially Cohen--Macaulay setting, a direct extension to edge ideals is not straightforward. The primary obstruction is that the initial-dimension skeleton, $\Delta^{\mdim\Delta}$, of an independence complex $\Delta_G$ does not necessarily correspond to the independence complex of any graph. This lack of correspondence complicates the translation between algebraic properties and their underlying graph-theoretic interpretations.

We define a graph $G$ as \emph{initially Cohen--Macaulay} over $\K$ if its Stanley--Reisner ring $\K[\Delta_G]$ (where $\Delta_G$ is the independence complex) is initially Cohen--Macaulay.

\begin{lemma}\label{indim P_n, C_n}
	Let $P_n$ be the path graph on $n \ge 2$ vertices and $C_n$ be the cycle graph on $n \ge 3$ vertices. Then
	$$\mdim R/I(P_n)=\mdim R/I(C_n)=\ceil{\frac{n}{3}}.$$
\end{lemma}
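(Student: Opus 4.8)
I need to show $\mdim R/I(P_n) = \mdim R/I(C_n) = \lceil n/3 \rceil$. Recall that $\mdim R/I$ is the infimum of the coheights $\coht P$ over $P \in \ass_R R/I$. For a squarefree monomial ideal $I = I_\Delta$, the associated primes are exactly the primes $P_C = (x_i : x_i \in C)$ where $C$ ranges over the minimal vertex covers of the graph (equivalently, the complements of facets of the independence complex $\Delta$). The coheight of $P_C$ is $n - |C| = \dim R/P_C$, which equals the size of the corresponding maximal independent set (facet of $\Delta_G$). So $\mdim R/I(G) = \min\{|F| : F \text{ is a facet of } \Delta_G\}$, the minimum size of a maximal independent set of $G$ — the **independent domination number** $i(G)$. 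The whole lemma therefore reduces to the combinatorial claim $i(P_n) = i(C_n) = \lceil n/3 \rceil$.

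**Key steps.** First I would record the translation above: $\mdim R/I(G)$ equals the minimum cardinality of a facet of $\Delta_G$, i.e. the smallest maximal independent set. Second, for the path $P_n$ with vertices $x_1, \dots, x_n$: a maximal independent set is a set $S$ that is independent (no two consecutive vertices) and dominating (every vertex is in $S$ or adjacent to a vertex of $S$) — maximality of an independent set is exactly the property of being a maximal independent set, hence an independent dominating set. A dominating set of $P_n$ has size at least $\lceil n/3 \rceil$ since each vertex dominates at most $3$ vertices (itself and two neighbours); this gives the lower bound. For the upper bound, exhibit an explicit maximal independent set of size $\lceil n/3 \rceil$: take $S = \{x_2, x_5, x_8, \dots\}$, i.e. $x_{3k+2}$ for $k \ge 0$ as long as the index is $\le n$, adjusting the last element so that the final vertex $x_n$ is dominated (a short case analysis on $n \bmod 3$). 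Verify this $S$ is independent and dominating, so maximal. Third, do the same for the cycle $C_n$: the domination lower bound $\lceil n/3 \rceil$ is identical, and the same periodic pattern $\{x_2, x_5, x_8, \dots\}$ around the cycle, with a small adjustment near the "wrap-around" depending on $n \bmod 3$, gives a maximal independent set of size $\lceil n/3 \rceil$.

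**Main obstacle.** The only delicate point is the boundary bookkeeping in the upper-bound constructions — making sure the chosen independent set actually dominates the last vertex $x_n$ (for $P_n$) or closes up correctly around the cycle (for $C_n$), and that in each residue class $n \equiv 0, 1, 2 \pmod 3$ the size is exactly $\lceil n/3 \rceil$ and not one more. This is a finite case check rather than a conceptual difficulty. I would handle it by writing the three residue cases explicitly: e.g. for $P_n$, when $n = 3m$ take $\{x_2, x_5, \dots, x_{3m-1}\}$ ($m$ elements); when $n = 3m+1$ take $\{x_2, x_5, \dots, x_{3m-1}, x_{3m+1}\}$ ($m+1$ elements); when $n = 3m+2$ take $\{x_2, x_5, \dots, x_{3m+1}\}$ ($m+1$ elements) — and check independence and domination in each. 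An entirely analogous listing works for $C_n$. Combining the matching lower and upper bounds yields $\mdim R/I(P_n) = \mdim R/I(C_n) = \lceil n/3 \rceil$.
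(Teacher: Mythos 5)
Your proposal is correct and follows essentially the same route as the paper: reduce $\mdim R/I(G)$ to the minimum cardinality of a maximal independent set of $G$ and exhibit explicit such sets of size $\ceil{n/3}$ by a case analysis on $n \bmod 3$ (the paper uses the pattern $x_1, x_4, \dots$ where you use $x_2, x_5, \dots$, an immaterial difference). In fact your write-up is slightly more complete, since you justify the lower bound $\ceil{n/3}$ via the domination argument (every maximal independent set is a dominating set, and each vertex of $P_n$ or $C_n$ dominates at most three vertices), whereas the paper simply asserts that its constructed sets have minimum cardinality.
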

\begin{proof}
	Let $X=\{x_1,\dots,x_n\}$ be the vertex set of $G$. For small $n \le 5$, the claim is easily verified. For $n > 5$, one can construct an explicit maximal independent set $F_n$ of minimum cardinality based on $n \pmod 3$, namely
	$$F_n=\begin{cases} 
		\{x_1,x_4,\dots,x_{n-5},x_{n-2}\},& \text{ if }n=3k\\
		\{x_1,x_4,\dots,x_{n-3},x_{n-1}\},& \text{ if }n=3k+1\\
		\{x_1,x_4,\dots,x_{n-4},x_{n-1}\},& \text{ if }n=3k+2.
	\end{cases}$$
	For $P_n$, this set shows $\mdim R/I(P_n)=\ceil{\frac{n}{3}}$. Since this set $F_n$ remains a maximal independent set of minimum cardinality for $C_n$ as well, the same result holds.
\end{proof}

\begin{proposition}\label{PCM Pn}
	Every chordal graph is initially Cohen--Macaulay. In particular, for $n\ge2$, the path graph $P_n$ is initially Cohen--Macaulay and $\dep R/I(P_n)=\ceil{\frac{n}{3}}$.
\end{proposition}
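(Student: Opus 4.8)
The plan is to reduce the statement to the known fact that edge ideals of chordal graphs are sequentially Cohen--Macaulay, and then invoke Corollary \ref{SCM-implies-ICM}. Recall first that for any graph $G$ the Stanley--Reisner ideal of the independence complex $\Delta_G$ is precisely the edge ideal $I(G)$, so $\K[\Delta_G]=R/I(G)$. By the theorem of Francisco and Van Tuyl, if $G$ is chordal then $R/I(G)$ is sequentially Cohen--Macaulay (equivalently, by Woodroofe's theorem the independence complex of a chordal graph is vertex decomposable, hence shellable, hence sequentially Cohen--Macaulay). Corollary \ref{SCM-implies-ICM} then yields at once that $\K[\Delta_G]$ is initially Cohen--Macaulay, i.e.\ $G$ is initially Cohen--Macaulay.

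An alternative self-contained route would be to apply Proposition \ref{PCM CM} and induct on the number of vertices: choose a simplicial vertex $v$ of $G$ coming from a perfect elimination ordering, relate the deletion $\del_{\Delta_G}(v)$ and link $\link_{\Delta_G}(v)$ (and the analogous operations at $N[v]$) to the independence complexes of the chordal graphs $G\setminus v$ and $G\setminus N[v]$, and show that Cohen--Macaulayness of the relevant initial-dimension skeletons is inherited. The delicate point in this approach is controlling how $\mdim$ behaves under these deletion/link operations, which is exactly why I would instead quote the Francisco--Van Tuyl result.

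For the ``in particular'' clause, $P_n$ is a tree and therefore chordal (it contains no induced cycle whatsoever), so the first part shows $P_n$ is initially Cohen--Macaulay; by definition this means $\dep R/I(P_n)=\mdim R/I(P_n)$. Lemma \ref{indim P_n, C_n} gives $\mdim R/I(P_n)=\ceil{\tfrac{n}{3}}$, and combining the two equalities yields $\dep R/I(P_n)=\ceil{\tfrac{n}{3}}$.

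The main obstacle is thus essentially a matter of citing the right input rather than a new computation: the entire argument hinges on the sequential Cohen--Macaulayness of chordal edge ideals, and if one wanted to avoid that citation the genuine work would be the inductive argument along a perfect elimination ordering together with the bookkeeping of the initial dimension under links and deletions.
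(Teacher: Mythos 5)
Your proposal is correct and follows exactly the paper's argument: quote Francisco--Van Tuyl for sequential Cohen--Macaulayness of chordal graphs, apply Corollary \ref{SCM-implies-ICM}, and then combine the definition of initially Cohen--Macaulay with Lemma \ref{indim P_n, C_n} to compute $\dep R/I(P_n)=\ceil{\frac{n}{3}}$. The alternative inductive route you sketch is not needed and is not pursued in the paper either.
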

\begin{proof}
	It is known that all chordal graphs are sequentially Cohen--Macaulay \cite[Theorem 3.2]{FranciscoTuyl2007}. The proposition follows as a direct consequence of Corollary \ref{SCM-implies-ICM} which states sequentially Cohen--Macaulay implies initially Cohen--Macaulay and Lemma \ref{indim P_n, C_n}.
\end{proof}

\begin{proposition}\label{PCM Cn}
	For $n\ge3$, the cycle graph $C_n$ is initially Cohen--Macaulay if and only if $n \not\equiv 1 \pmod 3$.
\end{proposition}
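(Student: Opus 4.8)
The plan is to use the homological characterization of the initially Cohen--Macaulay property (Proposition \ref{pro 1}) together with the computation $\mdim R/I(C_n) = \ceil{n/3}$ from Lemma \ref{indim P_n, C_n}. Write $\Delta = \Delta_{C_n}$ for the independence complex; its faces are the independent sets of $C_n$, and the links inside $\Delta$ are themselves independence complexes of smaller graphs. Concretely, for a face $F \in \Delta$, deleting the vertices of $F$ and their neighbors from $C_n$ leaves a disjoint union of path graphs, and $\link_\Delta(F)$ is the independence complex of that union of paths. So the criterion to check is: $\widetilde{H}_i\big(\Delta_{P_{m_1}} * \cdots * \Delta_{P_{m_s}}; \K\big) = 0$ for all $i < \ceil{n/3} - |F|$, for every such collection of paths arising from a face $F$ of size $|F|$.

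First I would recall (or prove via a short Mayer--Vietoris / repeated-cone argument) the well-known fact that $\widetilde{H}_i(\Delta_{P_m}; \K)$ is nonzero in exactly one degree: $\Delta_{P_m}$ is homotopy equivalent to a point when $m \equiv 0 \pmod 3$ and to a sphere of dimension $\ceil{m/3} - 1$ otherwise — more precisely $\widetilde{H}_i(\Delta_{P_m};\K) \ne 0$ iff $m \not\equiv 1 \pmod 3$ and $i = \lceil m/3\rceil - 1$, with the case $m \equiv 1$ giving a contractible complex. Using the Künneth formula for joins, $\widetilde{H}_*$ of a join of path complexes is concentrated in the single degree $-2 + \sum_j (\text{lowest nonvanishing degree of } \Delta_{P_{m_j}} + 1)$, provided no factor is contractible; if any factor $\Delta_{P_{m_j}}$ with $m_j \equiv 1$ appears, the whole join is contractible and contributes nothing. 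This reduces everything to a counting problem about how small that bottom nonvanishing degree can be relative to $n$ and $|F|$.

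Next I would carry out the counting. Given a face $F$ with $|F| = t$, removing $F$ and its neighbors removes at most $3t$ vertices (and, crucially, for a \emph{maximal} obstruction one wants the remaining graph to be a union of paths whose lengths are $\not\equiv 1 \pmod 3$ so that the join has nonvanishing homology in as low a degree as possible). A careful bookkeeping shows the smallest degree in which $\widetilde{H}_*(\link_\Delta(F);\K)$ can be nonzero is $\ge \ceil{n/3} - t - 1$ in all cases, with equality achievable exactly when $n \equiv 1 \pmod 3$ (by choosing $F$ suitably so that the leftover is a single path $P_m$ with $m \equiv 1 \pmod 3$ at the "boundary" of the estimate, or a union producing the critical join degree). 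When $n \equiv 0$ or $2 \pmod 3$, the parity constraints force every leftover configuration to have its bottom homology degree at least $\ceil{n/3} - t$, so the vanishing required by Proposition \ref{pro 1} holds and $C_n$ is initially Cohen--Macaulay; when $n \equiv 1 \pmod 3$, the witnessing face $F$ produces homology in degree $\ceil{n/3} - |F| - 1 < \ceil{n/3} - |F|$, violating the criterion.

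The main obstacle will be the case analysis in the counting step: one must verify the $3$-adic inequalities uniformly over all faces $F$ and all the resulting path-union configurations, handling the cyclic structure of $C_n$ correctly (removing an independent set from a cycle yields paths, not a cycle, but the endpoints behave slightly differently depending on whether consecutive removed vertices "wrap around"), and correctly accounting for which configurations make the join contractible. It may be cleaner to phrase this via the pure skeleton characterization: by Proposition \ref{PCM CM}, $C_n$ is initially Cohen--Macaulay iff $\Delta^{\ceil{n/3}-1}_{C_n}$ is Cohen--Macaulay, and one could instead check Reisner's classical criterion on that skeleton directly; but either way the underlying combinatorics of links of $C_n$ and the $\pmod 3$ dichotomy is what must be pinned down, and that is where the real work lies.
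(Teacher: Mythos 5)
Your overall strategy (apply the generalized Reisner criterion of Proposition \ref{pro 1} to $\Delta_{C_n}$, using the homotopy types of independence complexes of paths and the join decomposition of links) is viable and genuinely different from the paper's proof, which is a two-line comparison of $\mdim R/I(C_n)=\ceil{n/3}$ from Lemma \ref{indim P_n, C_n} with the known depth formula $\dep R/I(C_n)=\ceil{(n-1)/3}$ cited from Cimpoea\c{s}. However, as written your argument has a concrete gap, and it locates the obstruction in the wrong place. For a \emph{nonempty} face $F$ with $|F|=t$, the link is $\Delta_{P_{m_1}}*\cdots*\Delta_{P_{m_s}}$ with $\sum_j m_j = n-|N[F]|\ge n-3t$, and (when no factor is contractible) its unique nonvanishing reduced homology sits in degree
$$\sum_j\ceil{m_j/3}-1\;\ge\;\ceil{(n-3t)/3}-1\;=\;\ceil{n/3}-t-1\;=\;\mdim\Delta-|F|,$$
by subadditivity of $x\mapsto\ceil{x/3}$. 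So the vanishing required by Proposition \ref{pro 1} holds for \emph{every} nonempty face, for every $n$; there is no nonempty ``witnessing face,'' contrary to what your third paragraph suggests. The entire dichotomy is carried by $F=\emptyset$, whose link is $\Delta_{C_n}$ itself --- \emph{not} a join of path complexes --- and handling it requires Kozlov's computation of the homotopy type of the independence complex of a cycle: $S^{k-1}$ for $n=3k\pm1$ and $S^{k-1}\vee S^{k-1}$ for $n=3k$. For $n=3k+1$ this places homology in degree $k-1<\ceil{n/3}-1=\mdim\Delta$, violating the criterion, while for $n\not\equiv1\pmod 3$ the bottom nonvanishing degree equals $\mdim\Delta$. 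Your proposal neither supplies this cycle computation nor carries out the ``careful bookkeeping'' it defers to; once both are added the argument does close, and the bookkeeping in fact collapses to the one-line ceiling inequality above. The trade-off versus the paper's route is self-containedness (no reliance on the depth formula for $R/I(C_n)$) at the cost of importing the topological classification of $\mathrm{Ind}(P_m)$ and $\mathrm{Ind}(C_n)$.
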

\begin{proof}
	This characterization is obtained by comparing the module's initial dimension with its depth. From Lemma \ref{indim P_n, C_n}, we have $\mdim R/I(C_n)=\ceil{\frac{n}{3}}$. The depth is known to be $\dep R/I(C_n)=\ceil{\frac{n-1}{3}}$ \cite[Proposition 1.3]{Cimpoeas2015}. The module is initially Cohen--Macaulay precisely when these two values are equal, which occurs if and only if $n \not\equiv 1 \pmod 3$.
\end{proof}

We now shift focus to the clique complex $\Delta(G)$ of a chordal graph.

\begin{proposition}\label{PCM co-chordal}
	Let $G$ be a chordal graph. Then its clique complex $\Delta(G)$ is initially Cohen--Macaulay if and only if it is weakly connected.
\end{proposition}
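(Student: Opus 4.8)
Only the implication ``weakly connected $\Rightarrow$ initially Cohen--Macaulay'' needs work; the converse is Lemma \ref{PCM weakly connected}(1) and uses nothing about chordality.

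The plan is to run the generalized Reisner criterion (Proposition \ref{pro 1}) through the special homotopy type of clique complexes of chordal graphs. First I would record the classical fact that if $H$ is a connected chordal graph then $\Delta(H)$ is contractible: a simplicial vertex $v$ of $H$ has link $\Delta(H[N_H(v)])$ a full simplex, so $\Delta(H)\simeq\Delta(H\setminus v)$ with $H\setminus v$ again connected chordal, and one inducts. Hence for any induced subgraph $H\subseteq G$ the complex $\Delta(H)$ is homotopy equivalent to a set of $c(H)$ points, $c(H)$ the number of connected components of $H$; so $\widetilde H_i(\Delta(H);\K)=0$ for $i\ge 1$, $\widetilde H_0(\Delta(H);\K)=0$ iff $H$ is connected, and $\widetilde H_{-1}(\Delta(H);\K)=0$ iff $H$ has a vertex. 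Writing $c=\mdim\K[\Delta(G)]=\min\{|M|: M \text{ a maximal clique of }G\}$, so $\mdim\Delta(G)=c-1$, and using $\link_{\Delta(G)}(F)=\Delta(G[N(F)])$ where $N(F)$ is the set of vertices outside $F$ adjacent to every vertex of $F$, Proposition \ref{pro 1} then collapses to the single condition
$$(\star)\qquad G[N(F)]\text{ is connected for every clique }F \text{ of }G \text{ with }|F|\le c-2,$$
the potential failures coming only from $\widetilde H_0$ (the $\widetilde H_{-1}$ term being harmless since a clique of size $\le c-2$ is never maximal, and $\widetilde H_i=0$ for $i\ge 1$ automatically).

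It then remains to identify $(\star)$ with weak connectedness. The complex $\Delta(G)^{\mdim\Delta(G)}=\Delta(G)^{c-1}$ is the complex of all cliques of size $\le c$; it is pure of dimension $c-1$ since every smaller clique lies in a maximal clique of size $\ge c$, and weak connectedness is by definition the strong connectedness of this complex. One direction is free: if $(\star)$ holds then $\Delta(G)$ is initially Cohen--Macaulay by the previous paragraph, hence weakly connected by Lemma \ref{PCM weakly connected}(1) (alternatively, by Proposition \ref{PCM CM}, as $(\star)$ makes $\Delta(G)^{c-1}$ Cohen--Macaulay, together with \cite[Proposition 11.7]{Bjorner1995}). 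The hard step is the reverse: strong connectedness of $\Delta(G)^{c-1}$ forces $(\star)$. This is exactly where chordality is indispensable, because for a general pure complex strong connectedness does not force all codimension-$\ge 2$ links to be connected; I would argue it with a clique tree. Suppose, for contradiction, $F$ is a clique with $|F|\le c-2$ and $G[N(F)]$ has components $D_1,\dots,D_r$, $r\ge2$. Fix a clique tree $T$ of $G$ (nodes the maximal cliques, each $\{M: v\in M\}$ a subtree). The maximal cliques containing $F$ form a subtree $T_F$, and $M\mapsto\iota(M)$, where $M\setminus F\subseteq D_{\iota(M)}$, is a well-defined surjection $T_F\twoheadrightarrow\{1,\dots,r\}$ (extend $F\cup\{v\}$ for $v\in D_i$), hence non-constant on the tree $T_F$. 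Pick a clique-tree edge $\{M,M'\}$ in $T_F$ with $\iota(M)\ne\iota(M')$; its label $S=M\cap M'$ contains $F$, and $S=F$ because otherwise the nonempty clique $S\setminus F$ would lie in two different $D_i$'s. Since clique-tree edge labels are minimal separators, $M\setminus F$ and $M'\setminus F$ lie in distinct components of $G\setminus F$; choose $c$-cliques $A\subseteq M$ and $B\subseteq M'$ with $A\setminus F\subseteq M\setminus F$ and $B\setminus F\subseteq M'\setminus F$ both nonempty, so $A,B$ are facets of $\Delta(G)^{c-1}$. For any $c$-clique $C$ the set $C\setminus F$ is a nonempty clique of $G\setminus F$, hence lies in one component $\gamma(C)$ of $G\setminus F$; and two $c$-cliques sharing a $(c-1)$-subset share at least $(c-1)-|F|\ge1$ vertices outside $F$, so $\gamma$ is constant on each connected component of the facet-adjacency graph of $\Delta(G)^{c-1}$. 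As $\gamma(A)\ne\gamma(B)$, that graph is disconnected, i.e. $\Delta(G)^{c-1}$ is not strongly connected, contradicting weak connectedness. Chaining everything, $\Delta(G)$ is initially Cohen--Macaulay $\iff(\star)\iff\Delta(G)^{c-1}$ is strongly connected $\iff\Delta(G)$ is weakly connected.
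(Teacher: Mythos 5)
Your proof is correct, but the converse direction is argued along a genuinely different route from the paper. The paper's proof of ``weakly connected $\Rightarrow$ initially Cohen--Macaulay'' is homological: Fr\"oberg's theorem gives $I_{\Delta}$ a linear resolution, Katzman's formula for the projective dimension of such edge ideals yields $\pdim\K[\Delta]=\bight I_\Delta$, and Proposition \ref{pdim-bight} then closes the argument. You instead run the generalized Reisner criterion (Proposition \ref{pro 1}) through the topology of chordal clique complexes: since every $\link_{\Delta(G)}(F)=\Delta(G[N(F)])$ is again the clique complex of a chordal graph and hence homotopy equivalent to a discrete set of points, the criterion collapses to your condition $(\star)$ on connectivity of common neighborhoods, and you then use a clique tree (subtree property of $T_F$, minimality of edge labels as separators) to show that strong connectedness of $\Delta(G)^{c-1}$ forces $(\star)$. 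Your version is longer and imports standard chordal-graph structure theory (simplicial vertices, clique trees) rather than the two cited homological results, but it is essentially self-contained within the paper's framework and yields more: an explicit combinatorial criterion $(\star)$ equivalent to the initially Cohen--Macaulay property for chordal clique complexes, and a direct explanation of why weak connectedness, which for general pure complexes does not control links of codimension $\ge2$, suffices here. Two harmless slips: the $\widetilde H_{-1}$ conditions in Proposition \ref{pro 1} range over cliques $F$ with $|F|\le c-1$ (not only $\le c-2$), which is still fine since no clique of size less than $c$ is maximal; and the contractibility induction should note the one-vertex base case so that $H\setminus v$ remains nonempty and connected.
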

\begin{proof}
	The forward direction is given by Lemma \ref{PCM weakly connected}.
	
	Conversely, assume $G$ is chordal and $\Delta = \Delta(G)$ is weakly connected. By Fröberg’s theorem \cite[Theorem 1]{Froberg1990}, the ideal $I_\Delta$ has a linear resolution and $n-\mdim\Delta$ equals the maximal number of vertices in an induced disconnected subgraph of $G$. This implies $\pdim\K[\Delta]=n-\mdim\Delta-1=\bight I_\Delta$ \cite[Corollary 1.2]{Katzman2006}. By Proposition \ref{pdim-bight}, this equality is sufficient to show that $\Delta$ is initially Cohen--Macaulay.
\end{proof}

\begin{corollary}\label{CM co-chordal}
	For a chordal graph $G$, the clique complex $\Delta(G)$ is Cohen--Macaulay if and only if it is strongly connected.
\end{corollary}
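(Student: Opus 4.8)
The plan is to deduce this from Proposition \ref{PCM co-chordal} in essentially the same way that Proposition \ref{ICM-unmixed} derives the Cohen--Macaulay property from the initially Cohen--Macaulay property together with the counmixed condition. Recall that by Proposition \ref{ICM-unmixed}, $\K[\Delta(G)]$ is Cohen--Macaulay if and only if it is initially Cohen--Macaulay and counmixed, and by Lemma \ref{counmixed-equiv} the counmixed condition is equivalent to $\mdim \K[\Delta(G)] = \dim \K[\Delta(G)]$. For a Stanley--Reisner ring, $\dim \K[\Delta] = \dim\Delta + 1$ and $\mdim \K[\Delta] = \mdim\Delta + 1$, so counmixedness of $\K[\Delta(G)]$ is equivalent to $\mdim\Delta(G) = \dim\Delta(G)$, i.e.\ to $\Delta(G)$ being \emph{pure}.

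The key observation is then that for a pure complex, weak connectedness and strong connectedness coincide: if $\Delta$ is pure then $\Delta^{\mdim\Delta} = \Delta^{\dim\Delta} = \Delta$, so $\Delta$ is weakly connected (i.e.\ $\Delta^{\mdim\Delta}$ strongly connected) precisely when $\Delta$ itself is strongly connected. Conversely, I should check that strong connectedness of $\Delta(G)$ forces purity --- this is where I need to be a little careful, since a priori a strongly connected complex need not be pure. However, strong connectedness as defined here requires that any two facets be joined by a chain of facets $F = F_0, F_1, \dots, F_t = F'$ with $|F_i \cap F_{i+1}| = \dim\Delta$; for this to make sense with $F_0 = F$ a facet one needs $|F| \ge \dim\Delta$, forcing every facet to have dimension exactly $\dim\Delta$. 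So strong connectedness does imply purity, and hence for a strongly connected complex $\Delta^{\mdim\Delta} = \Delta$ is strongly connected, i.e.\ $\Delta$ is weakly connected.

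Putting these together: assume $G$ is chordal. If $\Delta(G)$ is Cohen--Macaulay, then by Proposition \ref{ICM-unmixed} it is initially Cohen--Macaulay and counmixed, hence pure and (by Proposition \ref{PCM co-chordal}, the forward direction via Lemma \ref{PCM weakly connected}) weakly connected; purity plus weak connectedness give strong connectedness. Conversely, if $\Delta(G)$ is strongly connected, then it is pure (so $\K[\Delta(G)]$ is counmixed) and weakly connected, so by Proposition \ref{PCM co-chordal} it is initially Cohen--Macaulay; by Proposition \ref{ICM-unmixed}, initially Cohen--Macaulay plus counmixed yields Cohen--Macaulay.

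The main obstacle I anticipate is purely bookkeeping around the definitions: making sure the translation between $\mdim\Delta$, $\dim\Delta$ and the counmixed condition on $\K[\Delta(G)]$ is airtight (this rests on the standard identifications $\dim\K[\Delta] = \dim\Delta+1$ and $\mdim\K[\Delta] = \mdim\Delta+1$, the latter being implicit in the paper's conventions, e.g.\ in the proof of Proposition \ref{pro 1}), and verifying that the paper's notion of strong connectedness genuinely entails purity so that the equivalence ``weakly connected $+$ pure $\Leftrightarrow$ strongly connected'' holds without a separate hypothesis. Everything else is a direct chaining of Proposition \ref{PCM co-chordal} and Proposition \ref{ICM-unmixed}. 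Indeed this is exactly parallel to how Corollary \ref{CM co-chordal} should be read as the ``pure'' shadow of Proposition \ref{PCM co-chordal}, just as Proposition \ref{ICM-unmixed} is the ``counmixed'' shadow of the definition of initially Cohen--Macaulay.
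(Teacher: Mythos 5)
Your proposal is correct and follows essentially the same route as the paper: the paper's proof likewise reduces to "strongly connected $\Rightarrow$ pure $\Rightarrow$ counmixed" and then combines Proposition \ref{ICM-unmixed} with Proposition \ref{PCM co-chordal}, with your write-up simply spelling out both directions in more detail. One tiny point: $|F_0\cap F_1|=\dim\Delta$ only gives $|F|\ge\dim\Delta$, i.e.\ $\dim F\ge\dim\Delta-1$; to finish the purity claim note that if $|F|=\dim\Delta$ then $F\subseteq F_1$, so maximality of $F$ forces $F=F_1=\cdots=F'$, and choosing $F'$ of top dimension yields a contradiction.
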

\begin{proof}
	If $\Delta=\Delta(G)$ is strongly connected, it must be pure. This purity ensures that $\Delta$ is counmixed (all associated primes have the same coheight). The result then follows from Proposition \ref{ICM-unmixed} and Proposition \ref{PCM co-chordal}.
\end{proof}

Let $(d_1,\dots,d_q)$ be a non-increasing sequence of positive integers. A graph $G$ is a \emph{$(d_1,\dots,d_q)$-tree} if it is formed by recursively gluing complete graphs $\mathcal{K}_{d_i}$ to a previously formed graph $H_{i-1}$ along a common complete subgraph $\mathcal{K}_{d_i-1}$, starting from $H_1=\mathcal{K}_{d_1}$ \cite[Definition 3.1]{AhmedMafiNamiq2025}.

\begin{corollary}\label{SCM stably connected}
	Let $\Delta=\Delta(G)$ be the clique complex of a chordal graph $G$. The following are equivalent:
	\begin{enumerate}
		\item $G$ is a $(d_1, \dots, d_q)$-tree.
		\item $\Delta$ is stably connected.
		\item $\Delta$ is sequentially Cohen--Macaulay.
	\end{enumerate}
\end{corollary}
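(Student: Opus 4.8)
The plan is to establish the chain of equivalences (1) $\Leftrightarrow$ (2) $\Leftrightarrow$ (3) by combining the structural characterization of sequentially Cohen--Macaulay complexes from Proposition \ref{SCM} with Fröberg's theorem and the recursive structure of $(d_1,\dots,d_q)$-trees. The implication (3) $\Rightarrow$ (2) is already available from Lemma \ref{PCM weakly connected}(2), so the real work lies in (2) $\Rightarrow$ (1) and (1) $\Rightarrow$ (3).

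For (1) $\Rightarrow$ (3), I would argue by induction on $q$, the number of complete graphs glued. The base case $H_1 = \mathcal{K}_{d_1}$ has clique complex a simplex, which is Cohen--Macaulay, hence sequentially Cohen--Macaulay. For the inductive step, when we glue $\mathcal{K}_{d_i}$ to $H_{i-1}$ along a common $\mathcal{K}_{d_i - 1}$, the clique complex $\Delta(H_i)$ is obtained from $\Delta(H_{i-1})$ by attaching a simplex along a face. The key point is that all the new facets introduced have a controlled dimension, and one can verify via Proposition \ref{SCM} that each pure skeleton $\Delta(H_i)^{[j]}$ remains Cohen--Macaulay for all $\mdim \le j \le \dim$; alternatively, since chordal graphs have chordal subgraphs and $(d_1,\dots,d_q)$-trees are chordal, one may invoke the classification machinery from \cite{AhmedMafiNamiq2025} directly, where such trees are shown to have Stanley--Reisner rings with the requisite pure-skeleton Cohen--Macaulayness. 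Either way, this direction should follow from a careful bookkeeping of how gluing along a codimension-one complete subgraph affects connectivity of every pure skeleton.

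For (2) $\Rightarrow$ (1): here I assume $G$ is chordal and $\Delta = \Delta(G)$ is stably connected, meaning $\Delta^{[i]}$ is strongly connected for all $-1 \le i \le \dim \Delta$. The strategy is to reconstruct $G$ as a $(d_1,\dots,d_q)$-tree by peeling off simplicial vertices (which exist in any chordal graph by Dirac's theorem). A simplicial vertex $v$ lies in a unique maximal clique; removing $v$ yields a smaller chordal graph $G'$, and the stable-connectedness hypothesis should be inherited by $\Delta(G')$ (the pure skeletons of $\Delta(G')$ are obtained from those of $\Delta(G)$ in a way that preserves strong connectivity precisely because $\Delta$ was stably connected — this is where the hypothesis does its work, ensuring the clique containing $v$ overlaps the rest in exactly a codimension-one face at each relevant skeletal level). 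Iterating, one recovers the gluing sequence. The hard part will be verifying that the strong-connectivity condition at \emph{every} skeletal level forces the overlap $\mathcal{K}_{d_i} \cap H_{i-1}$ to be exactly $\mathcal{K}_{d_i - 1}$ rather than a smaller clique; if the overlap were smaller, some pure skeleton would be disconnected, contradicting stable connectedness. This is the main obstacle and requires the most delicate combinatorial argument.

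Finally, to close the loop, (2) $\Rightarrow$ (3) follows from Proposition \ref{SCM}: stably connected means each pure skeleton is strongly connected, and for a chordal graph each such pure skeleton, being the skeleton of a clique complex of a chordal graph, is Cohen--Macaulay by Fröberg's theorem \cite{Froberg1990} combined with Reisner-type criteria (strong connectivity plus the chordal structure gives vanishing of the relevant reduced homology of links). Thus I would organize the writeup as: (3) $\Rightarrow$ (2) by Lemma \ref{PCM weakly connected}; (2) $\Rightarrow$ (3) via Proposition \ref{SCM} and Fröberg; (3) $\Rightarrow$ (1) by induction unwinding the dimension filtration; and (1) $\Rightarrow$ (3) by induction on the gluing construction — or cite \cite{AhmedMafiNamiq2025} for the equivalence (1) $\Leftrightarrow$ (3) and only prove (2) $\Leftrightarrow$ (3) here.
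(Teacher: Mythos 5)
Your final organization is essentially the paper's argument with the implication cycle rearranged, and it does close the loop: the paper proves (1)~$\Rightarrow$~(2) by induction on the gluing (each glued piece $\Delta(\mathcal{K}_{d_q})$ and the overlap $\Delta(\mathcal{K}_{d_q-1})$ are simplices, hence stable connectedness is preserved), then (2)~$\Rightarrow$~(3) exactly as you describe --- the pure skeletons $\Delta^{[i]}$ are again clique complexes of chordal graphs, so strong connectivity makes them Cohen--Macaulay by Corollary~\ref{CM co-chordal}, and Proposition~\ref{SCM} finishes --- and finally cites \cite[Theorem 3.2]{AhmedMafiNamiq2025} only for (3)~$\Rightarrow$~(1). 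Two remarks. First, your proposed direct proof of (2)~$\Rightarrow$~(1) by peeling simplicial vertices is both unnecessary (you already reach (1) via (2)~$\Rightarrow$~(3)~$\Rightarrow$~(1)) and incomplete: the step you yourself flag --- that stable connectedness at every skeletal level forces each overlap to be exactly $\mathcal{K}_{d_i-1}$ --- is precisely the content that the cited theorem supplies, so this detour should simply be deleted rather than left as an open obstacle. Second, for (1)~$\Rightarrow$~(3) you should not attempt the ``careful bookkeeping'' of Cohen--Macaulayness of every pure skeleton directly; the clean route is the paper's: prove the purely combinatorial statement (1)~$\Rightarrow$~(2) first (trivial, since simplices are stably connected and the gluing is along a codimension-one simplex) and then pass through (2)~$\Rightarrow$~(3). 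With those two adjustments your writeup coincides with the paper's proof.
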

\begin{proof}
	We prove the implication (1) $\implies$ (2) by induction on $q$. The base case $q=1$ is clear, since $G=\mathcal{K}_{d_1}$ means $\Delta$ is a simplex, which is stably connected. For the inductive step, let $G$ be formed by gluing $\mathcal{K}_{d_q}$ to a $(d_1,\dots,d_{q-1})$-tree $H_{q-1}$ along $\mathcal{K}_{d_q-1}$. The clique complex $\Delta(G)$ decomposes as $\Delta' \cup \Delta''$, where $\Delta' = \Delta(H_{q-1})$ and $\Delta'' = \Delta(\mathcal{K}_{d_q})$. By induction, $\Delta'$ is stably connected. Since $\Delta''$ is a simplex of dimension $d_q-1$, it is also stably connected. Their intersection, $\Delta'\cap\Delta''=\Delta(\mathcal{K}_{d_q-1})$, is a simplex of dimension $d_q-2$ and is thus also stably connected. Therefore, $\Delta$ is stably connected.
	
	For (2) $\implies$ (3), if $\Delta$ is stably connected, its pure skeletons $\Delta^{[i]}$ are strongly connected. Since $G$ is chordal, these skeletons are also clique complexes of chordal graphs. By Corollary \ref{CM co-chordal}, they are Cohen--Macaulay. Thus, $\Delta$ is sequentially Cohen--Macaulay by Proposition \ref{SCM}.
	
	The implication (3) $\implies$ (1) is established in \cite[Theorem 3.2]{AhmedMafiNamiq2025}.
\end{proof}

Finally, we define a property that combines the initial Cohen--Macaulay condition with a homological property of the ideal itself.

\begin{definition}
	A graded ideal $I$ is \emph{bi-initially Cohen--Macaulay} if $R/I$ is initially Cohen--Macaulay and $I$ has a degree resolution. In particular, a simplicial complex $\Delta$ is bi-initially Cohen--Macaulay if both $\Delta$ and $\Delta^\vee$ are initially Cohen--Macaulay.
\end{definition}

\begin{proposition}\label{bi-PCM graphs}
	Let $\Delta=\Delta(G)$ be the clique complex of a graph $G$. Then $\Delta$ is bi-initially Cohen--Macaulay if and only if $G$ is chordal and $\Delta$ is weakly connected.
\end{proposition}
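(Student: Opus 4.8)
The plan is to combine the two prior characterizations governing the two halves of the bi-initially Cohen--Macaulay condition. By definition, $\Delta = \Delta(G)$ is bi-initially Cohen--Macaulay if and only if $\Delta$ is initially Cohen--Macaulay and $I_\Delta$ has a degree resolution; equivalently (by Proposition \ref{PCM degree resolution} applied to $\Delta^\vee$), if and only if both $\Delta$ and $\Delta^\vee$ are initially Cohen--Macaulay. For the reverse direction, I would invoke Proposition \ref{PCM co-chordal}: if $G$ is chordal and $\Delta(G)$ is weakly connected, then $\Delta$ is initially Cohen--Macaulay, and moreover Fröberg's theorem \cite[Theorem 1]{Froberg1990} tells us that $I_\Delta$ has a linear resolution precisely because $G$ is chordal — and a linear resolution is in particular a degree resolution (indeed $I_\Delta$ is generated by the minimal non-faces, which for a clique complex are exactly the non-edges of $G$, all of degree $2$, so $\deg I_\Delta = 2 = \reg I_\Delta$ when the resolution is $2$-linear). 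Hence $\Delta$ is bi-initially Cohen--Macaulay.

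For the forward direction, suppose $\Delta = \Delta(G)$ is bi-initially Cohen--Macaulay. Then $I_\Delta$ has a degree resolution, and since $\Delta$ is a clique complex its Stanley--Reisner ideal is generated in degree $2$, so $\deg I_\Delta = 2$ and therefore $\reg I_\Delta = 2$, i.e. $I_\Delta$ has a $2$-linear resolution. By Fröberg's theorem \cite[Theorem 1]{Froberg1990}, an edge-ideal-type squarefree ideal generated in degree $2$ has a linear resolution if and only if the associated complex is the clique complex of a chordal graph; applied here this forces $G$ to be chordal. (One must be slightly careful: Fröberg's criterion is usually stated for $I_\Delta^\vee = I(\overline{G})$ having a linear resolution iff $G$ is chordal, but $I_{\Delta(G)}$ is exactly the edge ideal of $\overline{G}$, so the degree-$2$ linear resolution of $I_{\Delta(G)}$ is precisely the statement that the graph whose edge ideal it is — namely $\overline{G}$ — is co-chordal, i.e. $G$ is chordal.) Finally, since $\Delta$ is also initially Cohen--Macaulay, Lemma \ref{PCM weakly connected}(1) gives that $\Delta$ is weakly connected. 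This establishes both required conclusions.

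The main subtlety — and the step I would be most careful about — is the bookkeeping between $I_{\Delta(G)}$, the complement graph $\overline{G}$, and the Alexander dual: a degree resolution of $I_{\Delta(G)}$ must be recognized as a $2$-linear resolution (using that clique complexes have quadratically generated Stanley--Reisner ideals), and then Fröberg's characterization must be applied to the correct graph. Once that identification is made, both implications reduce cleanly to Proposition \ref{PCM co-chordal}, Lemma \ref{PCM weakly connected}, and the observation that chordality of $G$ is equivalent to the linearity (hence degree-ness) of the resolution of $I_{\Delta(G)}$, so no genuinely new computation is required beyond what the earlier results supply.
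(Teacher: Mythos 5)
Your proof is correct and follows essentially the same route as the paper: Fröberg's theorem for the chordality/linearity equivalence, Proposition \ref{PCM degree resolution} to translate the degree resolution of $I_\Delta$ into the initially Cohen--Macaulay property of $\Delta^\vee$, and Proposition \ref{PCM co-chordal} (resp.\ Lemma \ref{PCM weakly connected}) for the weak connectedness. Your explicit check that a degree resolution of the quadratically generated ideal $I_{\Delta(G)}=I(\overline{G})$ is automatically a $2$-linear resolution is a detail the paper leaves implicit, and is worth spelling out as you did.
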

\begin{proof}
	This follows by combining three characterizations. First, $I_\Delta = I(\overline{G})$ has a linear resolution if and only if $G$ is chordal \cite[Theorem 1]{Froberg1990}. Second, by Proposition \ref{PCM degree resolution}, $I_{\Delta}$ has a degree resolution if and only if $\Delta^\vee$ is initially Cohen--Macaulay. Third, $\Delta$ is initially Cohen--Macaulay if and only if it is weakly connected by Proposition \ref{PCM co-chordal}.
\end{proof}

\subsection{Graphs with Projective Dimension Equal to Maximum Degree}

We conclude by addressing the problem of classifying graphs whose projective dimension equals their maximum vertex degree. This extends previous work, which was primarily focused on co-chordal graphs \cite{AhmedMafiNamiq2025, Froberg2022, GitlerValencia2005, MoradiKiani2010}, to a classification of all graphs with this property.

\begin{proposition}\label{big max}
	Let $G$ be a graph. The big height of its edge ideal, $\bight I(G)$, equals the maximum vertex degree of $G$ if and only if there exists a free vertex contained in a facet of the independence complex $\Delta_G$ of minimum cardinality.
\end{proposition}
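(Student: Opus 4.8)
The plan is to translate the statement entirely into the language of the independence complex $\Delta=\Delta_G$ and then compute both sides. First I would recall the standard dictionary: the maximum vertex degree $d$ of $G$ equals the largest size of a minimal vertex cover that is the "closed neighborhood complement" — more precisely, if $x$ has degree $d$, then $X\setminus(\{x\}\cup N(x))$ together with... rather, the cleanest route is via the Alexander dual. The facets of $\Delta_G$ are the maximal independent sets; the minimal non-faces of $\Delta_G$ are the edges of $G$; hence the minimal generators of $I_{\Delta_G}=I(G)$ are the edge monomials, and $\bight I(G)=\max\{\hte P\mid P\in\mathrm{Ass}\ R/I(G)\}$. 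Since $I(G)$ is squarefree, its associated primes are generated by the minimal vertex covers of $G$, and the height of such a prime is the size of the corresponding vertex cover. Therefore $\bight I(G)$ is the maximum size of a \emph{minimal} vertex cover of $G$, equivalently $n$ minus the minimum size of a maximal independent set, i.e.
$$\bight I(G)=n-\min\{|F|\mid F\ \text{a facet of}\ \Delta_G\}=n-(\mdim\Delta_G+1),$$
using Lemma \ref{indim P_n, C_n}-style reasoning (or directly: $\mdim\K[\Delta_G]=n-\bight I(G)$ as in the proof of Proposition \ref{PCM CM}).

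Next I would identify when a vertex $x$ of degree $d$ gives rise to a maximal independent set of minimum size. The key observation is: $x$ is a \emph{free vertex} (belongs to a unique facet of $\Delta_G$, equivalently lies in exactly one maximal clique of $G$... but here I must be careful about which complex) precisely when the neighborhood structure around $x$ forces $X\setminus(\{x\}\cup N(x))$ to... Let me instead argue as follows. For a vertex $x$, the set $\{x\}\cup\{\,\text{a maximal independent set of the subgraph induced on}\ X\setminus(\{x\}\cup N(x))\,\}$ is a maximal independent set of $G$ containing $x$; its complement is a minimal vertex cover containing $N(x)$, hence of size $\ge d(x)$. Conversely every minimal vertex cover of size $\ge$ something arises this way. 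The cover of size exactly $d(x)=d$ (the maximum) occurs iff we can choose it to be exactly $N(x)$, which happens iff $X\setminus(\{x\}\cup N(x))$ is an \emph{independent} set already — that is, iff $N(x)$ is a vertex cover, iff $x$ lies in a facet of $\Delta_G$ of size $n-d$, and this facet is the unique one containing $x$ when... This is exactly the condition "$x$ is a free vertex contained in a minimum-cardinality facet." So I would show: $\bight I(G)=d$ $\iff$ there is a vertex $x$ with $d(x)=d$ and $N(x)$ is a minimal vertex cover $\iff$ $x$ is free and lies in a facet of $\Delta_G$ of minimum cardinality $n-d$. The "minimum cardinality" part forces $\mdim\Delta_G+1=n-d$, i.e. $\bight I(G)=n-(\mdim\Delta_G+1)=d$; the "free" part is what guarantees that $N(x)$ itself (and not some proper subset) is the vertex cover, i.e. that the cover has size exactly $d$ rather than less.

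The main obstacle I anticipate is pinning down the precise meaning of "free vertex" in this context and verifying the equivalence "free vertex in a minimum facet $\iff$ $N(x)$ is a minimal vertex cover of size $d$" without circularity. A vertex $x$ is free in $\Delta_G$ iff $x$ belongs to exactly one facet $F$; since $F=\{x\}\cup(\text{maximal independent set of}\ G\setminus N[x])$ up to... the uniqueness of $F$ is equivalent to $G\setminus N[x]$ having a unique maximal independent set, which in the squarefree setting means $G\setminus N[x]$ has no edges, i.e. is already independent, i.e. $V\setminus N[x]$ is a face — then $F=V\setminus N(x)$ and $X\setminus F=N(x)$. So freeness of $x$ is equivalent to $N(x)$ being a vertex cover, and minimality of that cover is automatic (removing any $y\in N(x)$ leaves the edge $xy$ uncovered). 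Thus the only real content is the counting identity for $\bight I(G)$ and the bookkeeping that the facet $V\setminus N(x)$ has minimum cardinality $\iff$ $|N(x)|=d$ is the \emph{maximum} possible vertex-cover size $\iff \bight I(G)=d$. I would organize the writeup as: (1) establish $\bight I(G)=n-(\mdim\Delta_G+1)$; (2) forward direction — given $\bight I(G)=d$, pick a minimal vertex cover $C$ of size $d$, show $C=N(x)$ for the unique $x\notin C$ of degree $d$, deduce $x$ is free and $V\setminus C$ is a minimum facet; (3) converse — given a free $x$ in a minimum facet, show $N(x)=X\setminus F$ is a minimal vertex cover of size $n-|F|=\bight I(G)$ and that $d(x)=|N(x)|\le d$ with the minimum-facet hypothesis forcing equality.
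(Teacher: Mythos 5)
Your core reasoning matches the paper's: both directions rest on the dictionary ``$\bight I(G)$ equals the maximum size of a minimal vertex cover, i.e.\ $n$ minus the minimum facet size of $\Delta_G$'' together with the characterization ``$x$ is a free vertex of a facet $F$ if and only if $N_G(x)=X(G)\setminus F$, equivalently $X(G)\setminus(\{x\}\cup N_G(x))$ is independent,'' and your observation that $N_G(x)$ is then automatically a \emph{minimal} cover (removing $y\in N_G(x)$ uncovers the edge $xy$) is exactly the right point. Your step (3) is correct, and your derivation of the unconditional inequality $\bight I(G)\ge\deg_G(x)$, by extending $\{x\}$ to a maximal independent set and noting that its complement is a minimal cover containing $N_G(x)$, supplies an inequality the paper's proof uses only implicitly.

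However, step (2) of your planned write-up would fail as stated. Given $\bight I(G)=d$, you propose to ``pick a minimal vertex cover $C$ of size $d$'' and then locate ``the unique $x\notin C$ of degree $d$.'' For an arbitrary maximum-size minimal cover $C$ no such vertex need exist: in the path $P_4$ on $x_1,x_2,x_3,x_4$ one has $\bight I(P_4)=2$, equal to the maximum vertex degree, and $C=\{x_2,x_3\}$ is a minimal vertex cover of size $2$, yet both vertices of its complement $\{x_1,x_4\}$ have degree $1$; that particular minimum facet contains no free vertex. (The proposition only asserts that \emph{some} minimum facet does --- here $\{x_2,x_4\}$, with free vertex $x_2$.) Uniqueness can also fail (e.g.\ in $C_4$). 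The fix is already latent in your middle paragraph: anchor the argument at a vertex $x$ of maximum degree $d$ rather than at a cover. Extend $\{x\}$ to a maximal independent set $F$; then $X(G)\setminus F\supseteq N_G(x)$ gives $d\le|X(G)\setminus F|\le\bight I(G)=d$, forcing $X(G)\setminus F=N_G(x)$, so $x$ is free in the minimum facet $F$. This is essentially how the paper's converse proceeds. With step (2) rewritten this way the proposal is complete.
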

\begin{proof}
	First, suppose $x$ is a free vertex in a minimum facet $F$ of $\Delta_G$. The set $C:=X(G)\setminus F$ is a minimal vertex cover. The freeness of $x$ implies it belongs to no other facet, which means $x$ must be adjacent to every vertex in $C$. Thus $N_G(x) = C$, and $\deg_G(x) = |C| = \bight I(G)$. Since $F$ is a minimum facet, $x$ has minimum degree in $\overline{G}$ and thus maximum degree in $G$.
	
	Conversely, suppose $\bight I(G) = \deg_G(x)$ for a vertex $x$ of maximum degree. Let $C = X(G) \setminus F$ for a minimum facet $F$. The condition implies $|C| = \bight I(G) = \deg_G(x) = |N_G(x)|$. It follows that $N_G(x) = C$, which means $x$ is not adjacent to any vertex in $F$ and adjacent to all vertices not in $F$. This implies $F$ is the only facet containing $x$, making $x$ a free vertex in a minimum facet.
\end{proof}

\begin{proposition}\label{pdim max}
	For any graph $G$, $\pdim \K[\Delta_G]$ equals the maximum vertex degree of $G$ if and only if $\Delta_G$ is initially Cohen--Macaulay and has a minimum facet containing a free vertex.
\end{proposition}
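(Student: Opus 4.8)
The plan is to deduce the statement from the chain of inequalities
$$\pdim\K[\Delta_G]\ \ge\ \bight I(G)\ \ge\ d,$$
where $d$ denotes the maximum vertex degree of $G$ and $I(G)=I_{\Delta_G}$ (the minimal non-faces of the independence complex are precisely the edges of $G$, so $\K[\Delta_G]=R/I(G)$), together with the two characterizations already available: Proposition \ref{pdim-bight}, which gives $\pdim\K[\Delta_G]=\bight I(G)$ exactly when $\Delta_G$ is initially Cohen--Macaulay, and Proposition \ref{big max}, which gives $\bight I(G)=d$ exactly when $\Delta_G$ has a facet of minimum cardinality containing a free vertex. Once the chain is in hand, the hypothesis $\pdim\K[\Delta_G]=d$ squeezes the middle term, so it becomes equivalent to the conjunction ``$\pdim\K[\Delta_G]=\bight I(G)$ and $\bight I(G)=d$'', and the two propositions translate these two equalities into precisely the asserted conditions.

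First I would record that $\pdim\K[\Delta_G]\ge\bight I(G)$: this is Proposition \ref{pdim-bight}, applied after localizing the polynomial ring $R$ at its irrelevant maximal ideal, which is Cohen--Macaulay and leaves graded Betti numbers, associated primes, and their heights unchanged --- the same passage to the local setting already used in the proofs of Propositions \ref{PCM degree resolution} and \ref{PCM co-chordal}. Next I would check that $\bight I(G)\ge d$: choose a vertex $x$ with $\deg_G(x)=d$ and extend the independent set $\{x\}$ to a facet $F$ of $\Delta_G$, i.e.\ a maximal independent set of $G$. Then $C=X\setminus F$ is a minimal vertex cover of $G$, so the monomial prime ideal $P_C$ generated by the vertices in $C$ is a minimal, hence associated, prime of $R/I(G)$; since $F$ is independent and contains $x$, every neighbour of $x$ lies in $C$, and therefore $\bight I(G)\ge\hte P_C=|C|\ge|N_G(x)|=d$.

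With the chain established, the conclusion is immediate. If $\pdim\K[\Delta_G]=d$, then $d\ge\bight I(G)\ge d$ forces both $\bight I(G)=d$ and $\pdim\K[\Delta_G]=\bight I(G)$; by Proposition \ref{pdim-bight} the latter equality says $\Delta_G$ is initially Cohen--Macaulay, and by Proposition \ref{big max} the former says $\Delta_G$ has a minimum facet containing a free vertex. Conversely, if $\Delta_G$ is initially Cohen--Macaulay and has a minimum facet containing a free vertex, then Propositions \ref{pdim-bight} and \ref{big max} give $\pdim\K[\Delta_G]=\bight I(G)=d$. The only genuine content is the inequality $\bight I(G)\ge d$, so this is where I would expect the ``work'' to lie, although it is elementary: it amounts to observing that a vertex of maximum degree sits in some maximal independent set whose complementary cover contains that vertex's whole neighbourhood. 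Everything else is bookkeeping around the two cited propositions.
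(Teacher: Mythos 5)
Your proof is correct and follows the same route as the paper, which likewise obtains the result by combining Propositions \ref{pdim-bight} and \ref{big max}. The paper's own proof is terser---it does not spell out the inequality $\bight I(G)\ge d$ that you prove in order to split the single hypothesis $\pdim\K[\Delta_G]=d$ into the two separate equalities needed for the forward direction---so your write-up is, if anything, the more complete one.
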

\begin{proof}
	This result is a direct combination of our previous findings. Proposition \ref{pdim-bight} states that $\pdim\K[\Delta_G]=\bight I(G)$ if and only if $\Delta_G$ is initially Cohen--Macaulay. Proposition \ref{big max} provides the combinatorial condition for when $\bight I(G)$ equals the maximum vertex degree.
\end{proof}

\begin{corollary}
	Let $G$ be a co-chordal graph. Then $\pdim \K[\Delta_G]$ equals its maximum vertex degree if and only if $\Delta_G$ is weakly connected and contains a minimum facet with a free vertex.
\end{corollary}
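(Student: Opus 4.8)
The plan is to derive this corollary directly from Proposition \ref{pdim max} by specializing the hypotheses to the co-chordal case, using the earlier results on clique and independence complexes. First I would observe that for a co-chordal graph $G$, its complement $\overline{G}$ is chordal, and the independence complex $\Delta_G$ is precisely the clique complex $\Delta(\overline{G})$; indeed, a subset of vertices is independent in $G$ if and only if it induces a clique in $\overline{G}$. This identification lets me invoke Proposition \ref{PCM co-chordal}, applied to the chordal graph $\overline{G}$, to replace the condition ``$\Delta_G$ is initially Cohen--Macaulay'' with the condition ``$\Delta_G$ is weakly connected'' — the two being equivalent in this setting.

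Next I would simply substitute this equivalence into the statement of Proposition \ref{pdim max}. That proposition asserts that $\pdim \K[\Delta_G]$ equals the maximum vertex degree of $G$ if and only if $\Delta_G$ is initially Cohen--Macaulay and has a minimum facet containing a free vertex. Replacing the first conjunct by its equivalent form for co-chordal $G$ yields exactly the claimed characterization: $\pdim \K[\Delta_G]$ equals the maximum vertex degree if and only if $\Delta_G$ is weakly connected and contains a minimum facet with a free vertex. The combinatorial condition about a minimum facet with a free vertex is unchanged, since it does not reference the chordality hypothesis at all.

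The only subtle point — and hence the main thing to get right rather than a genuine obstacle — is the correct identification of $\Delta_G$ with the clique complex of $\overline{G}$, ensuring that Proposition \ref{PCM co-chordal} (stated for clique complexes of chordal graphs) is being applied to $\overline{G}$ rather than to $G$ itself. Once that bookkeeping is settled, the proof is a two-line chaining of Proposition \ref{pdim max} with Proposition \ref{PCM co-chordal}, and I would write it out in essentially that form, perhaps noting explicitly that ``weakly connected'' here means the initial-dimension skeleton $\Delta_G^{\mdim \Delta_G}$ is strongly connected, to make the statement self-contained.
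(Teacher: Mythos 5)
Your proposal is correct and follows exactly the paper's own argument: apply Proposition \ref{pdim max} and then use Proposition \ref{PCM co-chordal} to replace the initially Cohen--Macaulay condition with weak connectedness, via the identification of $\Delta_G$ with the clique complex of the chordal graph $\overline{G}$. The extra care you take with that identification is a welcome clarification but does not change the route.
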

\begin{proof}
	This follows by applying Proposition \ref{pdim max} and then using Proposition \ref{PCM co-chordal} to replace the initially Cohen--Macaulay condition with the equivalent weakly connected condition for co-chordal graphs.
\end{proof}

\vspace{1ex}
\noindent\textbf{Concluding remark.}
We conclude by revisiting Hochster’s celebrated remark that ``life is really worth living in a Cohen--Macaulay ring" \cite[p. 887]{Hochster1978}. The results in this paper suggest that life is still very much worth living in the broader class of initially Cohen--Macaulay modules, given their rich and natural algebraic-combinatorial structure.

\bibliographystyle{amsrefs}
\bibliography{References}

\end{document}